\date{Aug 13, 2024
}
\let\oldsection\section
\renewcommand\section{\setcounter{equation}{0}\oldsection}
\newtheorem{corollary}{Corollary}[section]
\newtheorem{theorem}{Theorem}[section]
\newtheorem{lemma}{Lemma}[section]
\newtheorem{proposition}{Proposition}[section]
\newtheorem{definition}{Definition}[section]
\newtheorem{remark}{Remark}[section]
\begin{document}

\title[ existence and uniqueness of primitive equations]{Global well-posedness of the 3D primitive equations with horizontal viscosity and
vertical diffusivity II: close to $H^1$ initial data}

\author{Chongsheng~Cao}
\address[Chongsheng~Cao]{Department of Mathematics, Florida International University, University Park, Miami, FL 33199, USA}
\email{caoc@fiu.edu}

\author{Jinkai~Li}
\address[Jinkai~Li]{South China Research Center for Applied Mathematics and Interdisciplinary Studies, School of Mathematical Sciences,
South China Normal University, Guangzhou 510631, China}
\email{jklimath@gmail.com; jklimath@m.scnu.edu.cn}

\author{Edriss S.~Titi}
\address[Edriss S.~Titi]{Department of Mathematics, Texas A$\&$M University, College Station, TX 77843, USA.
Department of Applied Mathematics and Theoretical Physics, University of Cambridge,
Cambridge CB3 0WA, UK. Department of Computer Science and Applied Mathematics,
Weizmann Institute of Science, Rehovot 76100, Israel.}
\email{titi@math.tamu.edu; Edriss.Titi@maths.cam.ac.uk}

\author{Dong~Wang}
\address[Dong~Wang]{South China Research Center for Applied Mathematics and Interdisciplinary Studies, School of Mathematical Sciences,
South China Normal University, Guangzhou 510631, China}
\email{WDongter@m.scnu.edu.cn}

\keywords{Global well-posedness; primitive equations; Vertical eddy diffusivity; Horizontal eddy viscosity.}
\subjclass[2010]{26D10, 35Q35, 35Q86, 76D03, 76D05, 86A05, 86A10.}


\begin{abstract}
In this paper, we consider the initial-boundary value problem to the
three-dimensional primitive equations for the oceanic and atmospheric dynamics with
only horizontal eddy viscosities in the horizontal momentum equations and only vertical
diffusivity in the temperature equation in the domain $\Omega=M\times(-h,h)$, with $M=(0,1)\times(0,1)$. Global well-posedness of strong solutions is established, for any initial data $(v_0,T_0) \in H^1(\Omega)\cap L^\infty(\Omega)$ with
$(\partial_z v_0, \nabla_H T_0) \in L^q(\Omega)$ and $v_0 \in L_z^1(B^1_{q,2}(M))$, for some
$q \in (2,\infty)$, by using delicate energy estimates and maximal regularity estimate in the anisotropic setting.
\end{abstract}

\maketitle


\section{Introduction}
\label{sec1}
\allowdisplaybreaks
The incompressible primitive equations are derived from the Boussinesq equations of incompressible
flows and form a fundamental block in models of planetary oceanic and atmospheric dynamics;
see, e.g., Lewandowski \cite{Lewan}, Majda \cite{Majda}, Pedlosky \cite{Pedlosky}, Vallis \cite{Vallis}, Washington-Parkinson \cite{Washing}, and Zeng \cite{Zeng}.
In the context of the large-scale oceanic and atmospheric dynamics, an important feature is that the vertical scale of the ocean
and atmosphere is much smaller than the horizontal ones. Due to this fact, the primitive equations are derived from the Navier-Stokes equations by taking the small aspect
ratio limit. Such limit has been rigorous justified, for the case of incompressible fluid, first
by Az\'{e}rad-Guill\'{e}n \cite{PA.FG} in the framework of weak solutions without error estimate; and with error estimate for strong solutions
by Li-Titi \cite{J.Li2,J.Li3}. See also follow up related results by Furukawa-Giga-Hieber-Hussein-Kashiwabara-Wrona \cite{FugigaHHK, FuGiHiHuKaW1}, Furukawa-Giga-Kashiwabara \cite{FuruGigaKas}, Li-Titi-Yuan \cite{LiTitiYuan}, and Pu-Zhou \cite{Puzhou}. On the other hand the justification of such a limit and error estimates, in the atmospheric compressible case, are more involved and delicate depending on the structure of the initial data (see
\cite{LiuTiti} and \cite{GaoNeTang}). Moreover, in the oceanic and atmospheric
dynamics, due to the strong horizontal turbulent mixing, the horizontal eddy viscosity is much stronger than the vertical one; the vertical
eddy viscosity is very weak and thus often neglected.

In this paper, we consider the following system of primitive equations, which have only horizontal viscosity and vertical diffusivity:
\begin{eqnarray}
  &\partial_tv+(v\cdot\nabla_H)v+w\partial_zv+\nabla_Hp-\Delta_Hv+f_0\vec{k}\times v=0,\label{eq1}\\
  &\partial_zp+T=0,\label{eq2}\\
  &\nabla_H\cdot v+\partial_zw=0,\label{eq3}\\
  &\partial_tT+v\cdot\nabla_HT+w\partial_zT-\partial_z^2T=0,\label{eq4}
\end{eqnarray}
where the horizontal velocity $v=(v^1,v^2)$, vertical velocity $w$, pressure $p$, and temperature $T$ are the unknowns,
and $f_0$ is the Coriolis parameter. Here $\nabla_H=(\partial_x, \partial_y)$ and $\Delta_H=\partial_x^2+\partial_y^2$
denote the horizontal gradient and the horizontal Laplacian, respectively, and $\vec{k}\times v=(-v^2, v^1)$.

Mathematical studies on the fully viscous three-dimensional primitive equations were started by Lions-Temam-Wang \cite{Lions1,Lions2,Lions3} in the 1990s, and they established the global existence of weak solutions; however, the uniqueness of weak solutions is still an open problem, even for the two-dimensional
case. Note that, unlike the primitive equations, it is well known that weak solutions to the two-dimensional incompressible
Navier-Stokes equations are unique, see, e.g., Constantin-Foias \cite{PccF}, Ladyzhenskaya \cite{lady} and Temam \cite{Teman}. Although the
general uniqueness of weak solutions to the primitive equations is still unknown, some special cases have been addressed, see \cite{J.Li4,DFA,IYWM,TTM,LiYuan}. Remarkably, also unlike the
three-dimensional Navier-Stokes equations for which the global well-posedness is still unknown in general, global well-posedness of strong solution to the three-dimensional primitive equations has clearly been known since the work by Cao-Titi \cite{Cao1}. This global existence of strong solutions to the primitive equations was also proven later by Kobelkov \cite{GMK} and Kukavica-Ziane \cite{IKMZ}, by using different arguments and for different boundary conditions, see also Guo-Huang \cite{Boling}, Hieber-Kashiwabara \cite{MHTKa} and Hieber-Hussien-Kashiwabara \cite{MHATK} for some generalizations in the $L^p$ setting.

Recalling that the horizontal viscosity and diffusivity are much stronger than the vertical ones, it is natural to consider the primitive equations with anisotropic viscosity or diffusivity. While in all the papers mentioned above, the systems considered  there are assumed to have both full viscosity and full diffusivity. Mathematical studies on the primitive equations with partial viscosity or partial diffusivity were carried out
by Cao-Titi \cite{Cao2} and Cao-Li-Titi \cite{J.Li5,J.Li6}, where they proved that the primitive equations with full viscosity and with in addition either horizontal diffusivity or vertical diffusivity in the temperature equation have a unique global strong solution. Surprisingly, the vertical viscosity is not necessary for the global well-posedness of the primitive equations. In fact, it was proved by Cao-Li-Titi \cite{J.Li1,J.Li7, J.Li8} that the primitive equations with only horizontal viscosity are globally well-posed, as long as one still has either horizontal or vertical diffusivity.

In this paper, we continue the study of the primitive equations with only horizontal viscosity and only vertical diffusivity, i.e., system (\ref{eq1})--(\ref{eq4}), which has already been studied in \cite{J.Li1} for regular enough initial data. The aim of this paper is to improve the results in \cite{J.Li1} by relaxing the regularities of the initial data.

Consider system (\ref{eq1})--(\ref{eq4}) on the domain $\Omega := M\times (-h,h)$, with $M = (0,1)\times(0,1)$, and supplement it  with the following boundary and initial conditions:
\begin{eqnarray}
  &v, w, p, T \text{are periodic in}\, x, y, z, \label{eq5}\\
  &v\, \text{and}\, p\, \text{are even in}\, z, w\, \text{and}\, T \,\text{are odd in}\, z,  \label{eq6}\\
  &(v,T)|_{t=0}=(v_0,T_0). \label{eq7}
\end{eqnarray}
It should be noticed that, as pointed out in \cite{J.Li1},
the periodic and symmetry boundary conditions
(\ref{eq5})--(\ref{eq6}) to system (\ref{eq1})--(\ref{eq4}) on the domain $M\times(-h, h)$ are equivalent to
the physical boundary conditions of no-permeability and stress-free at the solid physical boundaries $z=-h$ and $z=0$ in the sub-domain
$M\times(-h,0)$, namely:
\begin{eqnarray*}
&  v, w, p, T \mbox{ are periodic in }x\mbox{ and }y, \\
&(\partial_zv,w)|_{z=-h,0}=0,\quad T|_{z=-h,0}=0.
\end{eqnarray*}

Then, system (\ref{eq1})--(\ref{eq4}), subject to (\ref{eq5})--(\ref{eq7}) is equivalent to (see \cite{J.Li5,J.Li6})
\begin{align}
\partial_tv+&(v\cdot\nabla_H)v+w\partial_zv-\Delta_Hv+f_0\vec{k}\times v \nonumber\\
+&\nabla_H\left(p_s(x,y,t)-\int_{-h}^zT(x,y,\xi,t)d\xi\right)=0, \label{eq8}\\
&\int_{-h}^h\nabla_H\cdot v(x,y,z,t)dz=0, \label{eq9}\\
\partial_tT+&v\cdot\nabla_HT+w\partial_zT-\partial_z^2T=0, \label{eq10}
\end{align}
where $p_s(x,y,t)$ is the pressure at $z=-h$ and
\begin{align}
&w(x,y,z,t)=-\int_{-h}^z \nabla_H \cdot v(x,y,\xi,t)d\xi, \label{eq11}
\end{align}
subject to the following boundary and initial conditions
\begin{eqnarray}
  &v, T \,\text{are periodic in}\, x, y, z, \label{eq12}\\
  &v \,\text{and}\, T \,\text{are even and odd in}\, z, \text{respectively},  \label{eq13}\\
  &(v,T)|_{t=0}=(v_0,T_0). \label{eq14}
\end{eqnarray}
Noticing that the symmetry condition (\ref{eq13}) is propagated by system (\ref{eq8})--(\ref{eq10}), it suffices to assume it on
the initial data.

Applying the operator $\text{div}_H$ to (\ref{eq8}) and integrating with respect to $z$ over $(-h,h)$, one can see that $p_s(x,y,t)$
satisfies the following
\begin{align}
\label{eq15}
\begin{cases}
&-\Delta_H p_s=\frac{1}{2h}\nabla_H\cdot\int_{-h}^h(\nabla_H\cdot(v\otimes v)+f_0\vec{k}\times v-\int_{-h}^z \nabla_HTd\xi)dz,\\
&\int_M p_s(x,y,t)dxdy=0, \,\,\,\,p_s \,\text{is periodic in}\, x \,\text{and}\, y.
\end{cases}
\end{align}
Here the constraint $\int_M p_sdxdy=0$ is imposed to guarantee the uniqueness of the solution $p_s$ to the elliptic problem (\ref{eq15}).

Throughout this paper, $L^q(M)$, $L^q(\Omega)$, $W^{m,q}(M)$, and $W^{m,q}(\Omega)$ denote the standard Lebesgue and Sobolev spaces, repectively, and $B_{p,q}^s(M)$ is the Besov space. For $q=2$, we use $H^m$ to replace $W^{m,2}$. We always use $\|f\|_p$ to denote the $L^p(\Omega)$ norm of $f$, while use
$\|f\|_{p,M}$ to denote the $L^p(M)$ norm of $f$.

\begin{definition}\label{def1}
Given a positive time $\mathcal{T}$ and periodic function $ (v_0, T_0) \in H^1(\Omega) $. Assume that $\int_{-h}^{h} \nabla_H\cdot v_0 dz=0,$ where
$v_0 \in L_z^1(B^1_{q,2}(M))$, $\partial_z v_0,\nabla_H T \in L^q(\Omega)$, for some $q \in (2,\infty)$, and that $v_0$ and $T_0$
are even and odd in $z$, respectively.
A pair $(v,T)$ is called a strong solution to system
(\ref{eq8})--(\ref{eq10}), subject to (\ref{eq12})--(\ref{eq14}), on
$\Omega\times(0,\mathcal T)$, if

(i) $v$ and $T$ are periodic in $x,y,z$, and are even and odd in $z$, respectively;

(ii) $v$ and $T$ have the regularities
\begin{eqnarray*}
  &&v \in L^{\infty}(0,\mathcal{T} ;H^1(\Omega))\cap C([0,\mathcal{T}];L^2(\Omega)),\\
  &&T \in L^{\infty}(0,\mathcal{T} ;H^1(\Omega)\cap L^{\infty}(\Omega))\cap C([0,\mathcal{T}];L^2(\Omega)),\\
  &&\partial_z v, \nabla_H T \in L^{\infty}(0,\mathcal{T};L^q(\Omega)),\\
  &&\nabla_H v, \partial_z T \in L^2(0,\mathcal{T};H^1(\Omega)),\quad
\partial_t v, \partial_t T\in L^2(0,\mathcal{T};L^2(\Omega));
\end{eqnarray*}

(iii) $v$ and $T$ satisfy (\ref{eq8})--(\ref{eq10}) a.e.\,in $\Omega \times (0,\mathcal{T})$, with $w$ and $p_s$ defined by (\ref{eq11}) and
(\ref{eq15}), respectively, and satisfy the initial condition (\ref{eq14}).
\end{definition}

\begin{definition}\label{def2}
A pair $(v,T)$ is called a global strong solution to system (\ref{eq8})--(\ref{eq10}), subject to (\ref{eq12})--(\ref{eq14}), if it is a strong solution
on $\Omega\times(0,\mathcal{T})$, for any $\mathcal{T} \in (0,\infty)$.
\end{definition}

\begin{theorem}
  \label{the1}

Let periodic function $(v_0, T_0) \in H^1(\Omega)\cap L^{\infty}(\Omega)$, with $\int_{-h}^{h} \nabla_H\cdot v_0dz=0$, $
v_0 \in L_z^1(B^1_{q,2}(M))$, and $\big(\partial_z v_0,\nabla_H T_0\big) \in L^q(\Omega)$, for some $q \in (2,\infty)$. Assume that $v_0$ and $T_0$
are even and odd in $z$, respectively.

Then, system (\ref{eq8})--(\ref{eq10}), subject to (\ref{eq12})--(\ref{eq14}), has a unique global strong solution $(v,T)$, which is continuously depending on the initial data.
\end{theorem}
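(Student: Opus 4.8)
The plan is to combine a local existence result with global-in-time a priori estimates and a continuation argument, and then to derive uniqueness and continuous dependence from an energy estimate on the difference of two solutions. For local existence I would construct approximate solutions---either by a Galerkin scheme using a basis compatible with the periodicity and the even/odd symmetry in $z$, or by a linearization--contraction argument---in a function space adapted to the regularity class of Definition~\ref{def1}; since the symmetry \eqref{eq13} and the constraint $\int_{-h}^h\nabla_H\cdot v\,dz=0$ are propagated by the flow, it suffices to impose them on the data. The heart of the matter is to show that the norms appearing in Definition~\ref{def1} stay finite on every finite time interval, which promotes the local solution to a global one.

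The a priori estimates I would carry out in the following order. First, the basic $L^2$ energy estimate: testing \eqref{eq8} with $v$ and \eqref{eq10} with $T$ and using \eqref{eq9} and \eqref{eq11} to cancel the transport and pressure contributions controls $\|v\|_2$ and $\|T\|_2$ together with the dissipation $\int_0^t(\|\nabla_H v\|_2^2+\|\partial_z T\|_2^2)\,ds$. Second, since \eqref{eq10} is a transport--diffusion equation with the three-dimensional divergence-free velocity field $(v,w)$, a maximum-principle argument gives $\|T(t)\|_\infty\le\|T_0\|_\infty$, and more generally uniform $L^p$ bounds on $T$. The core step is the coupled $L^q$ estimate for $(\partial_z v,\nabla_H T)$: applying $\partial_z$ to \eqref{eq8} produces $\nabla_H T$ as a forcing term, which is precisely why $\nabla_H T\in L^q$ is required, while applying $\nabla_H$ to \eqref{eq10} couples back to $\nabla_H v$; testing these against $|\partial_z v|^{q-2}\partial_z v$ and $|\nabla_H T|^{q-2}\nabla_H T$, regrouping the vertical-transport terms by means of $\partial_z w=-\nabla_H\cdot v$, and invoking the anisotropic maximal regularity estimate together with the hypothesis $v_0\in L_z^1(B^1_{q,2}(M))$, I expect these bounds to close. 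With $\partial_z v,\nabla_H T\in L^\infty_tL^q$ in hand, the $H^1$ estimates on $v$ and $T$, the bounds $\nabla_H v,\partial_z T\in L^2_tH^1$, and $\partial_t v,\partial_t T\in L^2_tL^2$ can then be upgraded.

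The step I expect to be the main obstacle is closing the $L^q$ estimate for $\partial_z v$. The difficulty is structural: the vertical velocity $w=-\int_{-h}^z\nabla_H\cdot v\,d\xi$ in \eqref{eq11} is nonlocal and costs one horizontal derivative, so the vertical-stretching terms $w\partial_z v$ and $w\partial_z T$ cannot be absorbed directly by the dissipation, which acts only horizontally on $v$ and only vertically on $T$. The delicate point is to split these terms into pieces that can be controlled either by the horizontal smoothing of $v$, by the vertical smoothing of $T$, or by the already-established $L^\infty$ bound on $T$, using anisotropic Ladyzhenskaya and Gagliardo--Nirenberg type inequalities; here the Besov regularity $v_0\in L_z^1(B^1_{q,2}(M))$ supplies exactly the borderline control of $w$ that allows the maximal regularity argument to absorb the top-order contributions.

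Finally, for uniqueness and continuous dependence I would take two strong solutions $(v_1,T_1)$ and $(v_2,T_2)$ with initial data $(v_0^{(1)},T_0^{(1)})$ and $(v_0^{(2)},T_0^{(2)})$, write the system satisfied by the differences $\delta v=v_1-v_2$ and $\delta T=T_1-T_2$, and perform an $L^2$ energy estimate. Using the a priori bounds above, in particular the $L^\infty$ control of $T$ and the $L^q$ control of the vertical derivatives, to estimate the differences of the quadratic nonlinearities and of the vertical-transport terms, a Gr\"onwall argument then yields $\|\delta v(t)\|_2^2+\|\delta T(t)\|_2^2\le C(t)\big(\|\delta v(0)\|_2^2+\|\delta T(0)\|_2^2\big)$ on every finite interval, which gives both uniqueness and continuous dependence of the solution on the initial data.
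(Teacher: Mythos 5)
Your overall architecture (approximate/local solutions, global a priori bounds in the class of Definition \ref{def1}, continuation, then an $L^2$ estimate on the difference of two solutions for uniqueness and continuous dependence) matches the paper, and your final uniqueness step is essentially the paper's. But the core of your proposal --- the claim that the coupled $L^q$ estimate for $(\partial_z v,\nabla_H T)$ closes by testing with $|\partial_z v|^{q-2}\partial_z v$ and $|\nabla_H T|^{q-2}\nabla_H T$ and invoking anisotropic maximal regularity --- has a genuine gap, and you have also misplaced the main obstacle. The $L^q$ bound on $u=\partial_z v$ is comparatively routine: testing (\ref{eq17}) with $|u|^{q-2}u$, the transport terms $(v\cdot\nabla_H)u+w\partial_z u$ cancel by incompressibility, and the rest is absorbed by the horizontal dissipation $\int|u|^{q-2}|\nabla_H u|^2$ using $\|T\|_\infty$ and a logarithmic bound on $\|v\|_\infty$. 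The genuinely hard estimate is $\nabla_H T$ in $L^\infty_t L^q$, since the temperature equation has no horizontal smoothing. Applying $\nabla_H$ to (\ref{eq10}) and testing produces, among others, the term $\int_\Omega|\nabla_H v||\nabla_H T|^q$. The only available control of $\|\nabla_H v\|_{L^1_z L^\infty_M}$ is through $\|\nabla_H^2 v\|_{L^1_z L^q_M}$, and the maximal regularity estimate (Proposition \ref{PRO-PARA-EST1}) bounds $\int_0^t\|\nabla_H^2 v\|_{L^1_zL^q_M}^2\,ds$ only by $C+C\int_0^t\|\nabla_H T\|_q^2\,ds$. Gr\"onwall then yields the self-referential inequality $\|\nabla_H T\|_q^q(t)\le Ce^{C\int_0^t\|\nabla_H T\|_q^2ds}$, which closes only on a short time interval $[0,\mathcal T_0]$ with $\mathcal T_0$ determined by the data. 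Your outline offers no mechanism to pass from this to an arbitrary time horizon.

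The paper's way out, absent from your proposal, is twofold. First, to make the $\nabla_H T$ estimate global, the velocity is decomposed (away from $t=0$) as $v=v_T+v_s+\frac{1}{|M|}\int_M v\,dxdy$, where $v_T$ solves the div-curl system (\ref{eq41}) driven only by the temperature and $v_s$ solves (\ref{eq40}) driven by $(\eta,\theta)$. The contribution of $\nabla_H v_T$ is handled by the Beale--Kato--Majda logarithmic inequality (Lemma \ref{lemma4}), exploiting $\|\nabla_H\cdot v_T\|_{L^\infty(M)}\le C\|T\|_\infty\le C\|T_0\|_\infty$, which yields only a factor $\log(e+\|\nabla_H T\|_q)$; the contribution of $\nabla_H v_s$ is controlled via a separate $L^2_tL^2_zL^q_M$ parabolic estimate on $\nabla_H(\eta,\theta)$ (Proposition \ref{pro4.3}); and the resulting differential inequality is closed by a logarithmic Gr\"onwall lemma (Lemma \ref{lemma5}). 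Second, even before that, the $L^\infty_t H^1$ bound on $v$ cannot be obtained by testing with $-\Delta_H v$ directly, because the pressure/temperature forcing involves $\nabla_H T$, which is never square-integrable in space-time (only $\partial_z T$ is dissipated); the paper circumvents this regularity mismatch with the auxiliary unknowns $\theta=\nabla_H^\perp\cdot v$ and $\eta=\nabla_H\cdot v+\Phi$, whose evolution equations see only $\partial_z T$. Without these two devices your scheme does not close globally in time. A minor further difference: the paper does not build a local solution from scratch but mollifies the initial data, invokes the known global well-posedness for $H^2$ data (Proposition \ref{prop2.1}), and passes to the limit by Aubin--Lions; your Galerkin route would require separate, nontrivial work at this low regularity.
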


\begin{remark}
Comparing with the previous work \cite{J.Li1}, where global well-posedness of strong solutions to the same problem as in this paper 
was established under the regularity conditions on the initial data that $v_0\in H^2(\Omega)$, $T_0\in H^1(\Omega)\cap L^\infty(\Omega)$, and $\nabla_HT_0\in L^q(\Omega)$, for some $q\in(2,\infty)$, less regularities on the initial data are required in Theorem \ref{the1} and, in particular, we do not need any information about the two derivatives of $v_0$ any more. Therefore, the result in Theorem \ref{the1} extends that in \cite{J.Li1}
by relaxing the regularities on the initial data.
\end{remark}

There are two main difficulties in proving the theorem. First is the ``mismatching" of regularities between the horizontal momentum equations
(\ref{eq8}) and the temperature equation (\ref{eq10}); and second is the absence of horizontal dissipation term in the temperature
equation. To overcome the ``mismatching" of regularities, in the spirit of \cite{J.Li1}, we employ the auxiliary functions $\theta$ and $\eta$,
being given in (\ref{eq16}) below,
to obtain the a priori $L_t^{\infty}(L^2)$ type estimate on the horizontal derivatives of the horizontal velocity. In
principle, the auxiliary functions $\theta$ and $\eta$ are used to control the horizontal derivatives of the velocity. The hardest a priori
estimate to be derived in this paper is the $L^\infty_t(L^q)$ estimate for $\nabla_HT$. Due to the absence of the horizontal dissipation in the
temperature equation, the following two terms will be encountered
$$
  \int_{\Omega} |\nabla_Hw||\partial_z\nabla_H T||\nabla_H T|^{q-2} dxdydz\quad\mbox{and}\quad
  \int_{\Omega}|\nabla_Hv| |\nabla_H T|^{q}dxdydz.
  $$
To deal with the first term, above, we make use of the $L_z^1L^2_tL_M^q$ type anisotropic parabolic estimate for $\nabla_H^2v$,
which is obtained by applying the maximal
regularity theory to the momentum equations in $x,y,t$ coordinates first and then integrating in the
$z$ variable. Concerning the second term, above, for
the sake of short in time estimates, we again
reduce it to the $L_z^1L^2_tL_M^q$ type anisotropic parabolic estimate for $\nabla_H^2v$; while for the sake
of long time estimate (but away from initial time),
we decompose the velocity into the ``temperature-determined" part $v_T$ and the ``velocity-dominated" part $v_s$, and
control the corresponding terms related to $v_T$ and $v_s$ in different ways, where $v_s$ and $v_T$, respectively, are being defined by (\ref{eq40}) and (\ref{eq41}), below.
Concerning the term related to $v_T$, thanks to the uniform boundedness of the temperature,
we apply the Beale-Kato-Majda type logarithmic Sobolev
inequality in \cite{BKM}
to deal with it. For the term related to $v_s$, we need to get the a priori $L^2_tL^2_zL^\infty_M$ estimate for $\nabla_Hv_s$, which
is achieved by carrying out the $L^2_tL^2_zL^q_M$ type anisotropic parabolic
estimate on $\nabla_H(\eta,\theta)$. Comparing to the previous work \cite{J.Li1}, the main
contribution of this paper is that the a priori $L^\infty_tL^q$ estimate of $\nabla_HT$ is achieved without appealing to the a priori
$L^\infty_tL^2$ estimate on $\partial_z^2v, \nabla_H\partial_zv, \nabla_H\eta, \nabla_H\theta$ and, as a result, we do not need any information
about the two derivatives of the initial velocity, which in turn allows us to assume less regular initial data.

The rest of this paper is arranged as follows: in the next section, Section \ref{sec2}, we collect some preliminary results which will be used in the later sections;
in Section \ref{sec3}, which is the key part of this paper, we establish the a priori estimate to strong solutions, containing successively
the $L^\infty(H^1)$ estimate on $v$, anisotropic parabolic estimates on $v, \eta, \theta$, short in time and global in time $L^\infty(L^q)$ estimates on $\nabla_HT$, and finally the overall estimates; in Section \ref{sec4}, we give the proof of Theorem \ref{the1}.

Throughout this paper, the letter $C$ denotes a general positive constant which may vary from line to line.

\section{Preliminaries}\label{sec2}

In this section, we state several preliminary lemmas which will be used
in the rest of this paper.

\begin{lemma}[see Lemma 2.1 in \cite{J.Li1}]\label{lemma1}

The following inequality holds:
\begin{eqnarray*}
  &&\int_M\left(\int_{-h}^h|f|dz\right)\left(\int_{-h}^h |gw|dz\right)dxdy \\
  &\leq& C\|f\|_2\|g\|_2^{\frac{1}{2}}\left(\|g\|_2^{\frac{1}{2}}+\|\nabla_Hg\|_2^{\frac{1}{2}}\right)\|w\|_2^{\frac{1}{2}}\left(\|w\|_2^{\frac{1}{2}}+\|\nabla_Hw\|_2^{\frac{1}{2}}\right)
\end{eqnarray*}
for every $f$, $g$ and $w$ such that the right-hand sides make sense and are finite, where $C$ is a positive constant depends only on $h$.
\end{lemma}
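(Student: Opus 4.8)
The plan is to reduce this three-dimensional estimate to a two-dimensional one on $M$ by first integrating out the vertical variable and then invoking the anisotropic (Ladyzhenskaya-type) Sobolev inequality in the horizontal variables. I would introduce the vertical profiles
$$F(x,y):=\int_{-h}^h|f|\,dz,\qquad \tilde g(x,y):=\left(\int_{-h}^h|g|^2\,dz\right)^{1/2},\qquad \tilde w(x,y):=\left(\int_{-h}^h|w|^2\,dz\right)^{1/2}.$$
By the Cauchy–Schwarz inequality in $z$ we have $\int_{-h}^h|gw|\,dz\le \tilde g\,\tilde w$, so the left-hand side is dominated by $\int_M F\,\tilde g\,\tilde w\,dxdy$. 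Applying H\"older's inequality on $M$ with the exponents $2,4,4$ then gives
$$\int_M F\,\tilde g\,\tilde w\,dxdy\le \|F\|_{2,M}\,\|\tilde g\|_{4,M}\,\|\tilde w\|_{4,M},$$
which exhibits the three factors to be estimated separately.

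First I would control the $L^2$ factor: a second use of Cauchy–Schwarz in $z$ gives $F^2\le 2h\int_{-h}^h|f|^2\,dz$, whence $\|F\|_{2,M}\le\sqrt{2h}\,\|f\|_2$, producing the desired $\|f\|_2$ term. For the two $L^4$ factors I would apply the two-dimensional Ladyzhenskaya inequality on the (periodic) square $M$, namely $\|\phi\|_{4,M}^2\le C\|\phi\|_{2,M}(\|\phi\|_{2,M}+\|\nabla_H\phi\|_{2,M})$, to $\phi=\tilde g$ and $\phi=\tilde w$. By Fubini's theorem the definitions give directly $\|\tilde g\|_{2,M}=\|g\|_2$ and $\|\tilde w\|_{2,M}=\|w\|_2$, so only the horizontal gradients of $\tilde g$ and $\tilde w$ remain. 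The key computation here is differentiating the square root: $\nabla_H\tilde g=(\int_{-h}^h g\,\nabla_Hg\,dz)/\tilde g$, and one more Cauchy–Schwarz in $z$ yields the clean pointwise bound $|\nabla_H\tilde g|\le(\int_{-h}^h|\nabla_Hg|^2\,dz)^{1/2}$, hence $\|\nabla_H\tilde g\|_{2,M}\le\|\nabla_Hg\|_2$, and similarly $\|\nabla_H\tilde w\|_{2,M}\le\|\nabla_Hw\|_2$.

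Feeding these into the Ladyzhenskaya inequality, and using the elementary comparison $(a+b)^{1/2}\le a^{1/2}+b^{1/2}$ to pass from $(\|g\|_2+\|\nabla_Hg\|_2)^{1/2}$ to $\|g\|_2^{1/2}+\|\nabla_Hg\|_2^{1/2}$ (and likewise for $w$), produces exactly the two bracketed factors appearing in the statement. Multiplying the three estimates together, with $C$ absorbing the constant $\sqrt{2h}$ and the Ladyzhenskaya constants, completes the argument.

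The step I expect to demand the most care is the differentiation of $\tilde g=(\int_{-h}^h|g|^2\,dz)^{1/2}$, since the formula for $\nabla_H\tilde g$ is only valid where $\tilde g>0$. To make this rigorous I would regularize, replacing $\tilde g$ by $\tilde g_\varepsilon:=(\varepsilon+\int_{-h}^h|g|^2\,dz)^{1/2}$, establish the gradient bound $|\nabla_H\tilde g_\varepsilon|\le(\int_{-h}^h|\nabla_Hg|^2\,dz)^{1/2}$ uniformly in $\varepsilon$, and then let $\varepsilon\to0^{+}$; this simultaneously confirms $\tilde g\in H^1(M)$, so that the Ladyzhenskaya inequality legitimately applies. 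The remaining ingredients—Cauchy–Schwarz, H\"older, Fubini, and the planar Ladyzhenskaya inequality—are entirely standard.
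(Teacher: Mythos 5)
Your proof is correct. Note that the paper itself gives no proof of this lemma; it is quoted verbatim from Lemma 2.1 of \cite{J.Li1}, so the only meaningful comparison is with the argument there. Your overall strategy coincides with the standard one: Cauchy--Schwarz in $z$, H\"older on $M$ with exponents $2,4,4$, the bound $\|F\|_{2,M}\le\sqrt{2h}\,\|f\|_2$, and the planar Ladyzhenskaya inequality to handle the two $L^4(M)$ factors. The one place where you genuinely deviate is in how Ladyzhenskaya is invoked: you apply it to the $z$-integrated profile $\tilde g=(\int_{-h}^h|g|^2dz)^{1/2}$, which forces you to justify that $\tilde g\in H^1(M)$ with $|\nabla_H\tilde g|\le(\int_{-h}^h|\nabla_Hg|^2dz)^{1/2}$, hence the regularization with $\tilde g_\varepsilon=(\varepsilon+\int_{-h}^h|g|^2dz)^{1/2}$. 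The reference instead writes $\|\tilde g\|_{4,M}^2=\|\int_{-h}^h|g|^2dz\|_{2,M}$, applies the Minkowski integral inequality to get $\le\int_{-h}^h\|g\|_{4,M}^2dz$, uses Ladyzhenskaya slice-by-slice in $z$, and finishes with Cauchy--Schwarz in $z$; this sidesteps differentiating the square root entirely. Your regularization is carried out correctly (the uniform-in-$\varepsilon$ gradient bound and the passage $\varepsilon\to0^+$ are both sound, and the elementary inequality $(a+b)^{1/2}\le a^{1/2}+b^{1/2}$ delivers exactly the bracketed factors in the statement), so the two routes are equally valid; the Minkowski-inequality route is merely a little more economical.
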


\begin{lemma}[see Lemma 2.2 in \cite{J.Li1}]\label{lemma2}

The following inequality holds:
$$
\int_M\left(\int_{-h}^h|f|dz\right)\left(\int_{-h}^h |gw|dz\right)dxdy
  \leq\left(\int_{-h}^h \|f\|_{4,M}dz\right)\left(\int_{-h}^h \|g\|_{4,M}^2dz\right)^{\frac{1}{2}}\|w\|_2
$$
and
$$
\int_M\left(\int_{-h}^h|f|dz\right)\left(\int_{-h}^h |gw|dz\right)dxdy\leq \|f\|_2\left(\int_{-h}^h \|g\|_{4,M}^2dz\right)^{\frac{1}{2}}\left(\int_{-h}^h \|w\|_{4,M}^2dz\right)^{\frac{1}{2}}
$$
for every $f$, $g$ and $w$ such that the right-hand sides make sense and are finite.
\end{lemma}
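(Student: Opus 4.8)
The plan is to treat both estimates as \emph{anisotropic} Hölder inequalities, in which the horizontal variables $(x,y)\in M$ and the vertical variable $z\in(-h,h)$ are handled by separate applications of Hölder's inequality and then reassembled via Minkowski's integral inequality together with the Cauchy--Schwarz inequality in $z$. Both bounds follow one template; the only difference lies in the choice of Hölder exponents in the horizontal integration over $M$, and it is exactly this choice that determines whether $f$ is ultimately measured in $L^4(M)$ (first estimate) or in $L^2(M)$ (second estimate).

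For the first inequality I would begin by applying the Cauchy--Schwarz inequality in $z$ to the inner factor, so that $\int_{-h}^h|gw|\,dz\le(\int_{-h}^h|g|^2dz)^{1/2}(\int_{-h}^h|w|^2dz)^{1/2}$ pointwise in $(x,y)$. Inserting this into the left-hand side and applying Hölder's inequality over $M$ with the exponents $(4,4,2)$ bounds it by
\[
\Big\|\int_{-h}^h|f|\,dz\Big\|_{4,M}\Big\|\big(\int_{-h}^h|g|^2dz\big)^{1/2}\Big\|_{4,M}\Big\|\big(\int_{-h}^h|w|^2dz\big)^{1/2}\Big\|_{2,M}.
\]
The last factor is exactly $\|w\|_2$. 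For the remaining two I would invoke Minkowski's integral inequality to interchange the $L^4(M)$ norm with the $z$-integration, which gives $\|\int_{-h}^h|f|dz\|_{4,M}\le\int_{-h}^h\|f\|_{4,M}dz$ and $\|(\int_{-h}^h|g|^2dz)^{1/2}\|_{4,M}=\|\int_{-h}^h|g|^2dz\|_{2,M}^{1/2}\le(\int_{-h}^h\|g\|_{4,M}^2dz)^{1/2}$, producing precisely the claimed right-hand side.

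For the second inequality I would instead use Fubini's theorem to write the left-hand side as the double vertical integral $\int_{-h}^h\int_{-h}^h\int_M|f(\cdot,z_1)||g(\cdot,z_2)||w(\cdot,z_2)|\,dxdy\,dz_1dz_2$ and then apply Hölder's inequality over $M$ with the exponents $(2,4,4)$ pointwise in $(z_1,z_2)$, bounding the inner $M$-integral by $\|f(\cdot,z_1)\|_{2,M}\|g(\cdot,z_2)\|_{4,M}\|w(\cdot,z_2)\|_{4,M}$. The two vertical integrations then factorize: the Cauchy--Schwarz inequality in $z_2$ converts $\int_{-h}^h\|g\|_{4,M}\|w\|_{4,M}dz$ into $(\int_{-h}^h\|g\|_{4,M}^2dz)^{1/2}(\int_{-h}^h\|w\|_{4,M}^2dz)^{1/2}$, while the $z_1$-integral $\int_{-h}^h\|f\|_{2,M}dz_1$ is reassembled into $\|f\|_2$ by a further Cauchy--Schwarz inequality in $z_1$ (up to a constant depending only on $h$; see below).

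Neither estimate conceals a genuine difficulty; the whole content is the bookkeeping of which norm each function lands in, dictated by the design principle that the factor destined to be the most delicate later (the vertical velocity $w$ in the first estimate, the term $f$ in the second) is placed in an $L^2$ slot while the others absorb the $L^4(M)$ integrability supplied by two-dimensional Ladyzhenskaya/Sobolev embeddings. The one point deserving care is the vertical bookkeeping of the $f$-factor in the second estimate: the step $\int_{-h}^h\|f\|_{2,M}dz\le C\|f\|_2$ is where a constant depending only on $h$ enters, and a test with $f=g=w\equiv1$ shows it is sharp with value $(2h)^{1/2}$, consistent with the convention of Lemma~\ref{lemma1}.
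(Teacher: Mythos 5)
Your argument for the first inequality is correct and is exactly the expected route: Cauchy--Schwarz in $z$ on $\int_{-h}^h|gw|\,dz$, H\"older over $M$ with exponents $(4,4,2)$, and Minkowski's integral inequality to pull the $L^4(M)$ norms inside the $z$-integrals (note the paper itself gives no proof here but simply cites Lemma 2.2 of \cite{J.Li1}; this is the standard H\"older--Minkowski argument behind that citation). Your treatment of the second inequality via Fubini and H\"older with exponents $(2,4,4)$ is also sound, and each individual step is correct.

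The point that deserves to be stated more forcefully than your parenthetical does: your proof of the second inequality establishes it only with the factor $\int_{-h}^h\|f\|_{2,M}\,dz$, and converting this to $\|f\|_2$ costs a factor $(2h)^{1/2}$ --- whereas Lemma \ref{lemma2} as printed, unlike Lemma \ref{lemma1}, carries \emph{no} constant. Your own test function settles that this is not a removable artifact of the proof: with $f=g=w\equiv 1$ on $\Omega=M\times(-h,h)$, $|M|=1$, the left-hand side equals $4h^2$ while the stated right-hand side equals
\begin{equation*}
\|f\|_2\Big(\int_{-h}^h\|g\|_{4,M}^2dz\Big)^{\frac12}\Big(\int_{-h}^h\|w\|_{4,M}^2dz\Big)^{\frac12}=(2h)^{\frac32},
\end{equation*}
so the constant-free inequality fails whenever $h>1/2$, and $(2h)^{1/2}$ is indeed the sharp constant, exactly as your chain produces. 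So the second inequality, as literally stated, should either carry a constant $C$ depending only on $h$ (matching the convention of Lemma \ref{lemma1}) or retain $\int_{-h}^h\|f\|_{2,M}\,dz$ in place of $\|f\|_2$; saying your version is ``consistent with the convention of Lemma \ref{lemma1}'' undersells the discrepancy, since that convention is precisely what Lemma \ref{lemma2}'s statement omits. This is harmless downstream --- every application in Section \ref{sec3} absorbs $h$-dependent constants into the generic $C$ --- but your proof is of the corrected statement, not the printed one, and you were right to isolate the $f$-bookkeeping as the one step where the constant enters.
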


The following lemma can be proven in the similar way as in Beale-Kato-Majda \cite{BKM}.

\begin{lemma}
 \label{lemma4}
For any periodic two-dimensional vector field $g=(g_1,g_2) \in W^{1,q}(M),$ with $q\in (2,\infty)$, it holds that
$$
  \|\nabla_H g\|_{L^\infty(M)}\leq C(\|\nabla_H\cdot g\|_{L^\infty(M)}+\|\nabla_H^{\bot}\cdot g\|_{L^\infty(M)}+1)
  \log(e+\|\nabla_H g\|_{W^{1,q}(M)})
$$
for a positive constant $C$ depending only on $q$.
\end{lemma}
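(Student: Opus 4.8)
The plan is to recover the full gradient $\nabla_H g$ from its divergence $d:=\nabla_H\cdot g$ and its curl $\omega:=\nabla_H^\perp\cdot g$ by an elliptic identity, and then to prove a logarithmic $L^\infty$ bound for the relevant second-order singular integral operators on the torus $M$. Writing $g=(g_1,g_2)$, a direct computation gives the two-dimensional Hodge-type identities
\begin{equation*}
\Delta_H g_1=\partial_x d-\partial_y\omega,\qquad \Delta_H g_2=\partial_y d+\partial_x\omega.
\end{equation*}
Since $d$ and $\omega$ have zero mean over $M$, the operator $\Delta_H^{-1}$ is well defined on them, and consequently each entry $\partial_i g_j$ of $\nabla_H g$ is a finite linear combination of terms of the form $\partial_i\partial_k\Delta_H^{-1}h$ with $h\in\{d,\omega\}$, that is, of double Riesz transforms $R_iR_k h$. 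Up to an explicit local multiple of $h$ (from the diagonal terms, harmless as it is bounded by $\|h\|_{L^\infty(M)}$), each such operator is a zeroth-order Calder\'on--Zygmund operator whose periodized kernel $K$ satisfies $|K(z)|\le C|z|^{-2}$ near the origin, enjoys the mean-zero cancellation $\int_{|z|<\delta}K(z)\,dz=0$, and is smooth away from the lattice. Thus the lemma reduces to establishing, for such an operator $\mathcal{R}$, the bound $\|\mathcal{R}h\|_{L^\infty(M)}\le C(\|h\|_{L^\infty(M)}+1)\log(e+\|h\|_{W^{1,q}(M)})$.

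To prove this core estimate I would split the kernel integral at a radius $\delta\in(0,R_0)$, where $R_0\sim\operatorname{diam}(M)$ is fixed. In the near field $|z|<\delta$ the cancellation lets me write $\mathcal{R}h(x)=\int_{|z|<\delta}K(z)\big(h(x-z)-h(x)\big)\,dz+(\text{middle and far parts})$; using the Morrey embedding $W^{1,q}(M)\hookrightarrow C^{0,1-2/q}(M)$, valid precisely because $q>2$, one has $|h(x-z)-h(x)|\le C\|\nabla_H h\|_{L^q(M)}|z|^{1-2/q}$, and then
\begin{equation*}
\int_{|z|<\delta}|z|^{-2}|z|^{1-2/q}\,dz=C\int_0^\delta r^{-2/q}\,dr\le C\,\delta^{1-2/q},
\end{equation*}
so the near-field contribution is bounded by $C\,\delta^{1-2/q}\|\nabla_H h\|_{L^q(M)}$. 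In the middle field $\delta<|z|<R_0$ the pointwise kernel bound yields the logarithm: $\int_{\delta<|z|<R_0}|K(z)|\,dz\le C\log(R_0/\delta)$, giving a contribution $\le C\|h\|_{L^\infty(M)}\log(R_0/\delta)$. The far field is controlled by $C\|h\|_{L^1(M)}\le C\|h\|_{L^\infty(M)}$ since the periodized kernel is bounded there and $M$ is bounded. Choosing $\delta$ so that $\delta^{1-2/q}=(e+\|\nabla_H h\|_{L^q(M)})^{-1}$ makes the first term $O(1)$ while turning the logarithm into $C\log(e+\|h\|_{W^{1,q}(M)})$, which is exactly the claimed form.

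Applying this with $h=d$ and $h=\omega$, and noting $\|d\|_{W^{1,q}(M)}+\|\omega\|_{W^{1,q}(M)}\le C\|\nabla_H g\|_{W^{1,q}(M)}$ together with $\|d\|_{L^\infty(M)}=\|\nabla_H\cdot g\|_{L^\infty(M)}$ and $\|\omega\|_{L^\infty(M)}=\|\nabla_H^\perp\cdot g\|_{L^\infty(M)}$, summing over the finitely many entries $\partial_i g_j$ yields the stated inequality. I expect the genuinely delicate step to be the core logarithmic bound for $\mathcal{R}$: Riesz transforms fail to be bounded on $L^\infty(M)$, so the endpoint must be recovered through the three-region splitting and the optimization of $\delta$, and the hypothesis $q>2$ is indispensable, as it is precisely what renders the near-field integral $\int_0^\delta r^{-2/q}\,dr$ convergent and supplies, via Morrey's inequality, the H\"older modulus of continuity needed to exploit the kernel cancellation.
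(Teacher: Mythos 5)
Your proof is correct and is essentially the argument the paper has in mind: the paper gives no proof but states that the lemma "can be proven in the similar way as in Beale--Kato--Majda," and your Hodge decomposition into double Riesz transforms of $\nabla_H\cdot g$ and $\nabla_H^{\bot}\cdot g$, followed by the near/middle/far kernel splitting with Morrey's embedding (using $q>2$) and optimization of the splitting radius $\delta$, is exactly that classical argument. The only point worth recording explicitly is the annular mean-zero cancellation of the second-order Riesz kernels (and the harmless local delta term on the diagonal), which you do note.
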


\begin{lemma}
  \label{pro3.2}
The following estimate holds
\begin{align*}
\sup_{-h\leq z\leq h} \|v(.,z,t)\|_{4,M}^4 \leq C(\|v\|_4^4 +\|\partial_z v\|_2\|v\|_6^3),
\end{align*}
where $C$ is a positive constant depending only on $h$.
\end{lemma}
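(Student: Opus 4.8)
The plan is to control the sup over the vertical variable by an anchor value plus the total vertical variation, exploiting the periodicity in $z$. Set
$$
g(z) := \|v(\cdot,z,t)\|_{4,M}^4 = \int_M |v(x,y,z,t)|^4\,dxdy.
$$
Since $v$ is periodic in $z$, the mean of $g$ over $(-h,h)$ is $\frac{1}{2h}\int_{-h}^h g(z)\,dz = \frac{1}{2h}\|v\|_4^4$, so by the mean value theorem for integrals there is a point $z_0\in[-h,h]$ with $g(z_0)\le \frac{1}{2h}\|v\|_4^4$. For an arbitrary $z$, the fundamental theorem of calculus gives
$$
g(z) = g(z_0) + \int_{z_0}^z g'(\xi)\,d\xi \leq \frac{1}{2h}\|v\|_4^4 + \int_{-h}^h |g'(\xi)|\,d\xi.
$$

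Next I would compute the derivative. Differentiating under the integral sign and using $\partial_z|v|^4 = 4|v|^2\, v\cdot\partial_z v$, one obtains
$$
|g'(z)| \leq 4\int_M |v|^3\,|\partial_z v|\,dxdy,
$$
and therefore, integrating in $z$ over $(-h,h)$,
$$
\int_{-h}^h |g'(\xi)|\,d\xi \leq 4\int_\Omega |v|^3\,|\partial_z v|\,dxdydz.
$$
A single application of the Cauchy--Schwarz (Hölder) inequality, splitting the integrand as $|v|^3\cdot|\partial_z v|$ with exponents $2$ and $2$, yields
$$
\int_\Omega |v|^3\,|\partial_z v|\,dxdydz \leq \big\||v|^3\big\|_2\,\|\partial_z v\|_2 = \|v\|_6^3\,\|\partial_z v\|_2.
$$
Combining the three displays gives
$$
\sup_{-h\leq z\leq h}\|v(\cdot,z,t)\|_{4,M}^4 \leq \frac{1}{2h}\|v\|_4^4 + 4\,\|\partial_z v\|_2\,\|v\|_6^3,
$$
which is the asserted inequality with $C=\max\{\tfrac{1}{2h},4\}$ depending only on $h$.

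There is no serious analytic obstacle here; the inequality is elementary once the right decomposition is chosen. The only point requiring a word of care is the rigorous justification of the fundamental theorem of calculus and the differentiation under the integral sign, which are valid for smooth periodic $v$ and then extend to the relevant Sobolev class by a standard density argument (the right-hand side is finite precisely when $v\in L^6(\Omega)$ and $\partial_z v\in L^2(\Omega)$, which is guaranteed by the regularity assumed for $v$ in Definition \ref{def1}). The essential ideas are the selection of a favorable anchor level $z_0$ via the mean value theorem and the $L^6$--$L^2$ Hölder split of the vertical-derivative term.
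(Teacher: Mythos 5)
Your proof is correct and follows essentially the same route as the paper: both bound the pointwise value of $\|v(\cdot,z,t)\|_{4,M}^4$ by its vertical average plus the integral of the vertical derivative, and then apply the H\"older split $\int_\Omega |v|^3|\partial_z v|\,dxdydz\leq \|v\|_6^3\|\partial_z v\|_2$. The paper merely compresses your anchor-point and fundamental-theorem steps into a single displayed inequality.
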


\begin{proof}

 For any $z \in (-h, h)$, it follows from the H\"{o}lder inequality that
\begin{align*}
\|v(.,z,t)\|_{4,M}^4 &\leq \frac{1}{2h}\int_{-h}^h \|v(.,z,t)\|_{4,M}^4dz + 4\int_{-h}^h\int_M |v|^3|\partial_z v|dxdydz\\
&\leq C(\|v\|_4^4 +\|\partial_z v\|_2\|v\|_6^3),
\end{align*}
leading to the conclusion.
\end{proof}

The following logarithmic type Gr\"{o}nwall inequality in the same spirit of \cite{Li-Titi-L,Li-TitiA,J.Li1} will be used later.

\begin{lemma} \label{lemma5}
Given $\mathcal{T} \in (0,\infty)$. Let $A$, $B$, $m$, $f$, $\ell$ be measurable nonnegative functions defined on $(0,\mathcal{T})$, with $A\geq1$
being absolutely continuous on $(0,\mathcal{T})$ and $f,\ell\in L^1\big((0, \mathcal{T})\big)$, satisfying
\begin{equation*}
  \left\{
  \begin{array}{l}
     \displaystyle A'(t)+B(t) \leq m(t)+A\left[\log A+\log(A+B+e)+f(t)\right]+\ell(t),\quad\forall t\in(0,\mathcal T),  \\
   \displaystyle \int_0^t m(s)ds \leq \int_0^t A(s)ds, \quad \forall t \in (0,\mathcal{T}).
  \end{array}
  \right.
\end{equation*}
Then, it holds that
\begin{eqnarray*}
  A(t)+\int_0^t B(s)ds\leq 2e^{e^{t}\left[\log \left(A(0)+e\right)+\int_0^t(f(s)+l(s)+log2+2)ds\right]},\quad\forall t\in(0,\mathcal T).
\end{eqnarray*}
\end{lemma}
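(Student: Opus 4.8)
The plan is to prove this logarithmic Grönwall inequality (Lemma \ref{lemma5}) by a substitution that linearizes the differential inequality, followed by a standard integrating-factor argument. The key obstruction to a direct Grönwall application is the composite term $A[\log A + \log(A+B+e)]$, which is superlinear in $A$ and couples $A$ to the ``dissipation'' $B$ on the right-hand side. The natural device is to introduce $G(t) := \log(A(t)+e)$, or more conveniently to work with $\Phi(t) := \log\big(A(t)+\int_0^t B(s)\,ds + e\big)$, so that the superlinear factors become comparable to $\Phi$ itself and the inequality collapses to something of the form $\Phi'(t) \le c\,\Phi(t) + (\text{integrable data})$. Given the target bound of double-exponential type (an outer $e^{e^t(\cdots)}$), one expects to apply Grönwall twice: once at the level of $\Phi$, and once when exponentiating back to recover $A$.

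First I would set $D(t) := A(t) + \int_0^t B(s)\,ds$ and $\Phi(t) := \log(D(t)+e)$, noting $\Phi(0)=\log(A(0)+e)$ and that $\Phi$ is absolutely continuous with $\Phi'(t) = \frac{A'(t)+B(t)}{D(t)+e}$. The numerator is precisely what the hypothesis controls: using $A'(t)+B(t)\le m(t)+A[\log A+\log(A+B+e)+f(t)]+\ell(t)$, I would estimate each piece against $D(t)+e$. Since $A\le D+e$ and $A+B+e\le (D+e)+B$, the dominant factor $A[\log A+\log(A+B+e)]$ is bounded by $(D+e)[\log(D+e)+\log(D+B+e)]$. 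The slightly delicate point is the $\log(A+B+e)$ factor, where $B$ appears without being absorbed into $D$'s logarithm; here I would use the elementary inequality $\log(A+B+e)\le \log(D+e) + \log 2 + \text{(controlled by $B/(D+e)$ after division)}$, or more cleanly bound $\frac{B}{D+e}\log$-type contributions using that $B\le A'+(\text{RHS})$ is already encoded in $D'$. After dividing through by $D+e$, these superlinear factors become $\Phi(t) + \log 2$ up to constants, yielding an inequality of the shape
\begin{equation*}
\Phi'(t) \le \frac{m(t)}{D(t)+e} + 2\Phi(t) + f(t) + \ell(t) + \log 2 + C,
\end{equation*}
where I have used $\log A\le\log(D+e)=\Phi$ and handled the doubled logarithm by the factor $2$.

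The step I expect to be the main obstacle is controlling the forcing term $\frac{m(t)}{D(t)+e}$, because the second hypothesis $\int_0^t m(s)\,ds\le\int_0^t A(s)\,ds$ is an \emph{integrated} constraint, not a pointwise one, so $m(t)/(D(t)+e)$ need not be pointwise small. The resolution is to not divide $m$ by $D+e$ pointwise but instead to integrate $\Phi'$ directly: after integrating the undivided inequality I would confront $\int_0^t \frac{m(s)}{D(s)+e}\,ds$ and bound it using the second hypothesis together with $D(s)+e\ge A(s)$, which gives $\int_0^t \frac{m(s)}{D(s)+e}\,ds\le \int_0^t \frac{m(s)}{A(s)}\,ds$; this is still awkward unless one uses the constraint more carefully. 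A cleaner route, and the one I would ultimately follow, is to keep $m$ on the right as $m(t)\le (D+e)\cdot\frac{m(t)}{A(t)}$ is avoided entirely by instead writing the master inequality for $D$ (not $\Phi$) and invoking the second hypothesis to replace $\int_0^t m$ by $\int_0^t A\le \int_0^t D$, thereby folding $m$'s contribution into the linear-in-$D$ terms before taking logarithms. With $m$ thus absorbed, the resulting inequality is $\Phi'(t)\le 2\Phi(t)+f(t)+\ell(t)+\log 2+C$ with integrable data, and I would conclude by Grönwall: $\Phi(t)\le e^{2t}\big(\Phi(0)+\int_0^t(f+\ell+\log2+C)\,ds\big)$. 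Exponentiating $\Phi=\log(D+e)$ recovers $D(t)+e\le \exp\big(e^{2t}[\cdots]\big)$, and after matching constants to the stated form (with the outer factor $2$ and the single $e^t$ in the exponent, achievable by tightening the $2\Phi$ to $\Phi$ via a sharper split of the two logarithms) this yields the claimed double-exponential bound on $A(t)+\int_0^t B(s)\,ds$.
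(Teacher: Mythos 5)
Your proposal follows essentially the same route as the paper: pass to a logarithm of $A$ plus the accumulated dissipation, absorb the troublesome $A\log(A+B+e)$ factor by splitting off $\log\frac{A+B+e}{2(A+e)}$ and using $\log z\le z$ so that half of $B$ is swallowed by the left-hand side, fold $m$ into the linear-in-$A$ terms via the integrated hypothesis $\int_0^t m\,ds\le\int_0^t A\,ds$ only after integrating (not pointwise), and finish with Gr\"onwall applied to the logarithm before exponentiating back. The one wrinkle you flag at the end --- that the coefficient $2$ multiplying the logarithm must be tightened to $1$ to match the stated $e^{t}$ --- is present (and silently elided) in the paper's own proof as well, and is immaterial for how the lemma is used, since any constant depending on $\mathcal T$ suffices.
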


\begin{proof}

Set $\tilde{A}=A+e$ and $\tilde{B}=A+B+e$. Since $\log z\leq log(z+1)\leq z$ for any $z>0$, one deduces by assumption that
\begin{eqnarray*}
\frac{\mathrm{d}}{\mathrm{d}t}\tilde{A}(t)+\tilde{B}(t) &\leq& m(t)+\tilde{A}\left(\log \tilde{A}+\log\tilde{B}+f(t)+1\right)+\ell(t)\\
&=&m(t)+\tilde{A}\left[\log \tilde{A}+\log\left(\frac{\tilde{B}}{2\tilde{A}}\right)+\log(2\tilde{A})+f(t)+1\right]+\ell(t)\\
&\leq& \frac{\tilde{B}}{2}+m(t)+\tilde{A}\left(2\log \tilde{A}+f(t)+1+\log2\right)+\ell(t),
\end{eqnarray*}
which implies
\begin{align*}
&\frac{\mathrm{d}}{\mathrm{d}t}\tilde{A}(t)+\frac{\tilde{B}(t)}{2}
\leq m(t)+\tilde{A}\left(2\log \tilde{A}+f(t)+1+\log2\right)+\ell(t).
\end{align*}
Thanks to this and since $\int_0^t m(s)ds \leq \int_0^t A(s)ds$, one obtains
\begin{eqnarray*}
 \tilde{A}(t)+\int_0^t\frac{\tilde{B}(s)}{2}ds
 &\leq& \int_0^t \Big(2\log \tilde{A}+f(s)+2+\log2\Big)\tilde{A}(s)ds+\int_0^t \ell(s)ds +A(0)\\
 &=:&F(t)
\end{eqnarray*}
and thus
\begin{eqnarray*}
 F'(t)
 &=&\Big(2\log \tilde{A}+f(t)+2+\log2\Big)\tilde{A}(t)+\ell(t)\\
 &\leq& 2F(t)\log F(t)+f(t)F(t)+(2+\log2)F(t)+\ell(t).
\end{eqnarray*}
Dividing both sides of the above inequality by $F(t)$ yields
$$\frac{d}{dt}\log F(t)\leq \log F(t)+f(t)+\ell(t)+2+\log2,$$
where $F(t)\geq1$ was used. By the Gr\"{o}nwall inequality, one obtains
$$\log F(t)\leq e^{t}\left(\log F(0)+\int_0^t \left(f(s)+\ell(s)+2+\log2\right)ds\right)$$
and thus
\begin{eqnarray*}
F(t)&\leq& e^{e^{t}\left(\log F(0)+\int_0^t \left(f(s)+\ell(s)+2+\log2\right)ds\right)}\\
&=&e^{e^{t}\left(\log( A(0)+e)+\int_0^t \left(f(s)+\ell(s)+2+\log2\right)ds\right)}.
\end{eqnarray*}
With the aid of this and recalling the definition of $F(t)$, it follows that
\begin{eqnarray*}
A(t)+\int_0^t B(s)ds&\leq& 2\left(\tilde{A}(t)+\int_0^t \frac{\tilde{B}(s)}{2}ds\right)\\
&\leq& 2e^{e^{t}\left(\log( A(0)+e)+\int_0^t \left(f(s)+\ell(s)+2+\log2\right)ds\right)},
\end{eqnarray*}
hence the conclusion follows.
\end{proof}

The following logarithmic type anisotropic Sobolev inequality is cited from \cite{J.Li8}, some relevant inequalities can be found in \cite{Caowu,CaoFarTiti,Danpai}.
\begin{lemma}[see Lemma 2.4 in \cite{J.Li8}]\label{lemma29} Let $\lambda>0$ and $\mathbf{p}=(p_1, p_2, p_3)$, with $p_i \in (1, \infty)$ and $\frac{1}{p_1}+\frac{1}{p_2}+\frac{1}{p_3}<1$. Then, for any periodic function $F$ on $\Omega$, we have
\begin{eqnarray*}
\|F\|_\infty\leq C_{\mathbf{p}, \lambda, \Omega}\max\left\{1, \sup_{r\geq 2}\frac{\|F\|_r}{r^{\lambda}}\right\}\log^{\lambda}\left(\sum_{i=1}^3\left(\|F\|_{p_i}+\|\partial_i F\|_{p_i}\right)+e\right).
\end{eqnarray*}
\end{lemma}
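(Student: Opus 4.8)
The plan is to prove the inequality by combining an anisotropic Gagliardo--Nirenberg--Agmon interpolation inequality with a Brezis--Gallouet type optimization over an auxiliary Lebesgue exponent, where the optimization is fed by the polynomial growth of the $L^r$ norms recorded in $K:=\max\{1,\sup_{r\ge2}\|F\|_r/r^{\lambda}\}$, i.e.\ $\|F\|_r\le Kr^{\lambda}$ for all $r\ge2$. Throughout, write $\mathcal D:=\sum_{i=1}^{3}(\|F\|_{p_i}+\|\partial_iF\|_{p_i})+e$, so that $\mathcal D\ge e$ and the target reads $\|F\|_\infty\le C_{\mathbf{p},\lambda,\Omega}\,K\,\log^{\lambda}\mathcal D$.

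The first step is to establish, for every sufficiently large $r$, an interpolation inequality of the form
\[
\|F\|_\infty\le C_{\mathbf{p}}\,\|F\|_r^{1-\theta(r)}\,\mathcal D^{\theta(r)},
\]
where $\theta(r)\in(0,1)$ is fixed by the anisotropic dimensional balance and satisfies $\theta(r)\sim c_{\mathbf{p}}/r$ as $r\to\infty$, with $c_{\mathbf{p}}>0$ governed by the strictly positive quantity $1-\sum_{i}1/p_i$. Since $\Omega$ is a box, I would derive this by applying the one-dimensional Sobolev embedding $W^{1,p_i}\hookrightarrow L^\infty$ (valid because each $p_i>1$) successively in the three coordinate directions, interpolated against the $L^r$ norm, and then multiplying the three resulting factors using the H\"older inequality with the exponents $p_1,p_2,p_3$. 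The condition $\frac{1}{p_1}+\frac{1}{p_2}+\frac{1}{p_3}<1$ is precisely what guarantees that the exponent $\theta(r)$ stays positive and tends to $0$ at the rate above.

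The second step feeds in $\|F\|_r\le Kr^{\lambda}$ to obtain
\[
\|F\|_\infty\le C_{\mathbf{p}}\,K^{1-\theta(r)}\,r^{\lambda(1-\theta(r))}\,\mathcal D^{\theta(r)}\le C_{\mathbf{p}}\,K\,r^{\lambda}\,\exp\!\big(\theta(r)\log\mathcal D\big),
\]
using $K\ge1$ so that $K^{1-\theta}\le K$ and $r^{\lambda(1-\theta)}\le r^{\lambda}$. One then optimizes by choosing $r\simeq\log\mathcal D$, which is legitimate since $\mathcal D\ge e$ (the degenerate range $\log\mathcal D<2$ is handled separately by simply taking $r=2$, where $\mathcal D^{\theta(2)}\le C_{\mathbf{p}}$). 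With $\theta(r)\sim c_{\mathbf{p}}/r$, this choice forces $\theta(r)\log\mathcal D=O(1)$, hence $\exp(\theta(r)\log\mathcal D)\le C_{\mathbf{p}}$, while $r^{\lambda}\le C_{\lambda}(\log\mathcal D)^{\lambda}$. Collecting constants yields exactly $\|F\|_\infty\le C_{\mathbf{p},\lambda,\Omega}\,K\,\log^{\lambda}\mathcal D$.

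The main obstacle is the first step, and specifically the requirement that the interpolation constant $C_{\mathbf{p}}$ be \emph{uniform in} $r$: any genuine polynomial growth, say like $r^{\mu}$ with $\mu>0$, would survive the optimization and replace $\log^{\lambda}\mathcal D$ by $\log^{\lambda+\mu}\mathcal D$, destroying the sharp exponent. Thus the delicate point is to carry out the anisotropic iteration so that the three one-dimensional embedding constants combine into an $r$-independent constant, equivalently to prove the anisotropic Agmon inequality with a constant bounded uniformly as $r\to\infty$. The positivity of $1-\sum_{i}1/p_i$ is exactly what makes this possible, since it keeps $\theta(r)$ bounded away from the critical regime in which such interpolation constants blow up.
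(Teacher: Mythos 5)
Your proposal is correct and follows essentially the same route as the proof this paper relies on: the paper itself gives no argument but cites Lemma 2.4 of \cite{J.Li8}, whose proof is precisely this Brezis--Gallou\"et-type scheme, namely an anisotropic interpolation inequality $\|F\|_\infty\leq C_{\mathbf{p}}\|F\|_r^{1-\theta(r)}\mathcal{D}^{\theta(r)}$ with constant uniform in $r$ and $\theta(r)\asymp \big(\big(1-\sum_{i=1}^3 1/p_i\big)r\big)^{-1}$, followed by the choice $r\simeq\log\mathcal{D}$ and the growth hypothesis $\|F\|_r\leq Kr^{\lambda}$. You also correctly isolate the one delicate point, the $r$-uniformity of the interpolation constant, which indeed holds because $\sum_{i=1}^3 1/p_i<1$ keeps the exponent balance subcritical and the iterated one-dimensional estimates only produce factors like $s^{1/s}\leq e^{1/e}$.
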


 We also need the following Aubin-Lions Lemma.
\begin{lemma}[Aubin-Lions Lemma, See Corollary 4 in \cite{JSimon}]\label{lemma7}
Assume that $X$, $B$ and $Y$ are three Banach spaces, with $X\hookrightarrow\hookrightarrow B\hookrightarrow Y$. Then the following two items hold:

(i) If $\mathcal{F}$ is a bounded subset of $L^p(0,\mathcal{T};X)$, where $1\leq p< \infty$,
and $\frac{\partial \mathcal{F}}{\partial t} :=\left\{\frac{\partial f}{\partial t} \big| f \in \mathcal{F}\right\}$ is bounded in
$L^1(0,\mathcal{T};Y)$, then $\mathcal{F}$ is relatively compact in $L^p(0,\mathcal{T};B)$;

(ii) if $\mathcal{F}$ is a bounded subset of $L^{\infty}(0,\mathcal{T};X)$, and $\frac{\partial \mathcal{F}}{\partial t}$
is bounded in $L^r(0,\mathcal{T};Y)$, where $r>1$, then $\mathcal{F}$ is relatively compact in $C([0,\mathcal{T}];B)$.
\end{lemma}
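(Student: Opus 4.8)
The plan is to prove both parts by combining two classical ingredients: \emph{Ehrling's compact interpolation inequality}, which converts the compactness of the embedding $X\hookrightarrow\hookrightarrow B$ into a usable estimate, and a vector-valued Arzel\`a--Ascoli / Fr\'echet--Kolmogorov compactness criterion, which turns the control on the time derivative into temporal equicontinuity. First I would establish that for every $\eta>0$ there is $C_\eta>0$ with $\|u\|_B\le\eta\|u\|_X+C_\eta\|u\|_Y$ for all $u\in X$. This is proved by contradiction: if it failed there would be a sequence $u_n$ with $\|u_n\|_B=1$, $\|u_n\|_X$ bounded and $\|u_n\|_Y\to0$; the compact embedding $X\hookrightarrow\hookrightarrow B$ would give $u_n\to u$ in $B$ with $\|u\|_B=1$, while the continuous embedding $B\hookrightarrow Y$ would force $u_n\to u=0$ in $Y$ and hence $u=0$, a contradiction. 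This step is the only place where compactness of the embedding is used.

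For part (ii) I would argue directly by Arzel\`a--Ascoli in $C([0,\mathcal T];B)$. Boundedness of $\mathcal F$ in $L^\infty(0,\mathcal T;X)$ makes $\{f(t):f\in\mathcal F,\ t\in[0,\mathcal T]\}$ bounded in $X$, hence precompact in $B$ by the compact embedding. For equicontinuity, the bound on $\partial_t\mathcal F$ in $L^r(0,\mathcal T;Y)$ with $r>1$ gives, via H\"older, $\|f(t_2)-f(t_1)\|_Y\le\|\partial_tf\|_{L^r(0,\mathcal T;Y)}|t_2-t_1|^{1-1/r}$; inserting this together with the uniform $X$-bound into the Ehrling inequality yields $\|f(t_2)-f(t_1)\|_B\le 2\eta\sup_{\mathcal F}\|f\|_{L^\infty(0,\mathcal T;X)}+C_\eta\,|t_2-t_1|^{1-1/r}\sup_{\mathcal F}\|\partial_tf\|_{L^r(0,\mathcal T;Y)}$. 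Choosing $\eta$ small and then $|t_2-t_1|$ small makes the right-hand side uniformly small, so $\mathcal F$ is equicontinuous, and Arzel\`a--Ascoli gives relative compactness in $C([0,\mathcal T];B)$.

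For part (i), where only $\partial_t\mathcal F\subset L^1(0,\mathcal T;Y)$ is available, I would instead verify the Fr\'echet--Kolmogorov criterion in $L^p(0,\mathcal T;B)$ through time-mollification. The crucial estimate is uniform $L^p$-equicontinuity of translations: writing $f(t+h)-f(t)=\int_t^{t+h}\partial_sf\,ds$ and putting $g(s)=\|\partial_sf(s)\|_Y$, one has $\int_t^{t+h}g\le M:=\sup_{\mathcal F}\|\partial_tf\|_{L^1(0,\mathcal T;Y)}$, so that $\big(\int_t^{t+h}g\big)^p\le M^{p-1}\int_t^{t+h}g$ and, integrating in $t$, $\|f(\cdot+h)-f(\cdot)\|_{L^p(0,\mathcal T-h;Y)}^p\le M^p h$. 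Combining this with the Ehrling inequality and the uniform $L^p(0,\mathcal T;X)$-bound shows $\|f(\cdot+h)-f(\cdot)\|_{L^p(B)}\to0$ uniformly over $\mathcal F$ as $h\to0$. After extending each $f$ by reflection to a slightly larger interval, the mollifications $J_\epsilon f$ are, for fixed $\epsilon$, bounded in $X$ pointwise and Lipschitz in $t$ with values in $X$ uniformly over $\mathcal F$, hence relatively compact in $C([0,\mathcal T];B)$ by the part-(ii) reasoning; meanwhile the translation estimate gives $J_\epsilon f\to f$ in $L^p(B)$ uniformly over $\mathcal F$. A diagonal extraction over $\epsilon=1/k$ then produces a subsequence that is Cauchy, hence convergent, in $L^p(0,\mathcal T;B)$.

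The main obstacle is exactly the weaker hypothesis in part (i): with $\partial_t$ only in $L^1(0,\mathcal T;Y)$ one cannot obtain pointwise equicontinuity in $Y$, so the plain Arzel\`a--Ascoli argument of part (ii) is unavailable; one must instead extract $L^p$-equicontinuity of translations from the mere $L^1$-derivative bound, which is the role of the elementary but decisive inequality $\big(\int_t^{t+h}g\big)^{p-1}\le M^{p-1}$, and then run the mollification-plus-diagonal machinery, handling the endpoints of $(0,\mathcal T)$ by the reflection extension so as to avoid any appeal to equi-integrability of $\|f\|_X$.
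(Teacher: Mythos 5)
The paper does not prove this lemma at all: it is quoted verbatim as Corollary 4 of Simon \cite{JSimon}, so there is no internal proof to compare against. Judged on its own merits, your argument is correct and is essentially the classical Aubin--Lions--Simon proof: Ehrling's interpolation inequality $\|u\|_B\le\eta\|u\|_X+C_\eta\|u\|_Y$ (proved by the standard contradiction/compactness argument, which is where $X\hookrightarrow\hookrightarrow B$ enters), Arzel\`a--Ascoli for part (ii), and for part (i) the translation estimate $\|f(\cdot+h)-f(\cdot)\|_{L^p(0,\mathcal T-h;Y)}^p\le M^ph$ obtained from $\bigl(\int_t^{t+h}g\bigr)^p\le M^{p-1}\int_t^{t+h}g$ and Fubini, combined with mollification and a diagonal extraction. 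This is faithful to Simon's own route (his Theorem 1 reduces compactness in $L^p(0,\mathcal T;B)$ to exactly such translation equicontinuity, and his Lemma 8 is the Ehrling step); your mollification-plus-diagonal construction in fact makes the appeal to Fr\'echet--Kolmogorov unnecessary, since uniform approximation of $\mathcal F$ by the relatively compact families $J_\epsilon\mathcal F$ already yields total boundedness.

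One point deserves a touch more care than your sketch gives it, though it is routine rather than a gap: in part (ii) the hypotheses bound $\|f(t)\|_X$ only for a.e.\ $t$, and a priori $f$ is only an equivalence class, so before invoking Arzel\`a--Ascoli you should pass to the absolutely continuous $Y$-valued representative (available since $\partial_tf\in L^1(0,\mathcal T;Y)$), verify on the full-measure set where the $X$-bound holds that your Ehrling-plus-H\"older estimate gives uniform $B$-continuity, and extend by density to all of $[0,\mathcal T]$; pointwise precompactness of $\{f(t):f\in\mathcal F\}$ in $B$ at the exceptional times then follows by approximating with nearby generic times. With that refinement made explicit, your proof is complete.
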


\section{A priori estimate }\label{sec3}

This section is devoted to establishing the a priori estimates on suitably solutions to system (\ref{eq8})--(\ref{eq10}), subject to (\ref{eq12})--(\ref{eq14}). We start with the following global well-posedness result proved in Cao-Li-Titi \cite{J.Li1}.

\begin{proposition}[see Theorem 1.1 in Cao-Li-Titi \cite{J.Li1}]
  \label{prop2.1}
Suppose $v_0 \in H^2(\Omega)$, $T_0 \in H^1(\Omega)\cap L^{\infty}(\Omega)$, with $\int_{-h}^h \nabla_H \cdot v_0(x,y,z)dz=0$ and $\nabla_H T_0 \in L^q(\Omega)$, for some $q \in (2,\infty)$, be two periodic functions, such that they are even and odd in $z$, respectively. Then, system (\ref{eq8})--(\ref{eq10}), subject to (\ref{eq12})--(\ref{eq14}), has a unique global strong solution $(v,T)$, which is continuously depending on the initial data.
\end{proposition}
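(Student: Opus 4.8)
The plan is to establish global well-posedness by combining a standard local existence theory with global-in-time a priori estimates, then extending the local solution and proving uniqueness and continuous dependence. Since the datum $v_0$ is more regular here (namely $v_0\in H^2(\Omega)$), there is room to propagate second-order bounds, which is precisely what makes this case more tractable than Theorem \ref{the1}. First I would construct a local strong solution on a maximal interval $[0,\mathcal{T}^*)$ by a Galerkin approximation (or a linearization/fixed-point scheme), using $v_0\in H^2$, $T_0\in H^1\cap L^\infty$, and $\nabla_H T_0\in L^q$; the local theory is routine once the nonlinear terms $w\partial_z v$ and $w\partial_z T$ are handled by the anisotropic product estimates in Lemmas \ref{lemma1} and \ref{lemma2}. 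The whole task then reduces to showing that the relevant norms stay finite on every finite interval, so that $\mathcal{T}^*=\infty$.

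For the a priori estimates I would proceed hierarchically. The basic $L^2$ energy estimate, obtained by testing (\ref{eq8}) with $v$ and (\ref{eq10}) with $T$ and using $\nabla_H\cdot v+\partial_z w=0$ to cancel the transport terms (and $\int_{-h}^h\nabla_H\cdot v\,dz=0$ to kill the pressure contribution), gives $v\in L^\infty_t L^2\cap L^2_t\dot{H}^1_H$ and $T\in L^\infty_t L^2\cap L^2_t\dot{H}^1_z$. A maximum-principle argument on (\ref{eq10}), whose transport is incompressible and whose only dissipation is $\partial_z^2$, yields the crucial uniform bound $\|T(t)\|_{\infty}\le\|T_0\|_{\infty}$. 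To control $\nabla_H v$ in $L^\infty_t L^2$, I would then introduce the auxiliary scalars $\theta=\nabla_H\cdot v$ and $\eta=\nabla_H^\perp\cdot v$ of (\ref{eq16}): applying $\nabla_H\cdot$ and $\nabla_H^\perp\cdot$ to (\ref{eq8}) eliminates the surface pressure $p_s$ and reduces the Coriolis term to lower-order coupling, producing convection--horizontal-diffusion equations for $(\theta,\eta)$ whose forcing involves $\nabla_H T$ and quadratic velocity terms. Testing these in $L^2$ and bounding the nonlinearities via Lemmas \ref{lemma1}--\ref{lemma2} and Lemma \ref{pro3.2} closes an $L^\infty_t L^2\cap L^2_t H^1$ estimate for $(\theta,\eta)$, hence for $\nabla_H v$.

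The heart of the proof is the $L^\infty_t L^q$ estimate for $\nabla_H T$. Differentiating (\ref{eq10}) horizontally and testing against $|\nabla_H T|^{q-2}\nabla_H T$ produces, after integrating by parts in $z$ against $-\partial_z^2 T$, exactly the two dangerous integrals
\[
\int_\Omega |\nabla_H w|\,|\partial_z\nabla_H T|\,|\nabla_H T|^{q-2}\,dxdydz
\quad\text{and}\quad
\int_\Omega |\nabla_H v|\,|\nabla_H T|^{q}\,dxdydz.
\]
Because $v_0\in H^2$, I would bring in the second-order $L^\infty_t L^2$ bounds on $v$ --- on $\partial_z^2 v$, $\nabla_H\partial_z v$, and $\nabla_H(\eta,\theta)$ --- obtained by further differentiating the momentum and auxiliary equations and testing in $L^2$; these control $\nabla_H w=-\int_{-h}^z\nabla_H(\nabla_H\cdot v)\,d\xi$ and $\nabla_H v$ in strong enough norms to dominate both integrals. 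The resulting differential inequality for $\|\nabla_H T\|_q^q$ turns out to be only logarithmically supercritical: estimating the $\|\nabla_H v\|_\infty$-type factors by the anisotropic logarithmic Sobolev inequality of Lemma \ref{lemma29}, and using the uniform bound $\|T\|_\infty\le\|T_0\|_\infty$, the inequality fits the hypotheses of the logarithmic Gr\"{o}nwall Lemma \ref{lemma5}, which I would invoke to close a finite $L^\infty_t L^q$ bound on $\nabla_H T$ over any finite interval.

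With $\nabla_H T\in L^\infty_t L^q$ and the second-order velocity bounds secured, the remaining estimates --- the full $H^2$ bound for $v$, the $H^1$ bound for $T$, and the parabolic regularity of $\nabla_H^2 v$ and $\partial_z^2 T$ via maximal-regularity estimates --- close on any finite interval, precluding blow-up and yielding a global solution, with the passage to the limit in the Galerkin scheme handled by the Aubin-Lions Lemma \ref{lemma7}. Uniqueness and continuous dependence then follow from an $L^2$ estimate on the difference of two solutions, whose transport terms are again controlled by Lemmas \ref{lemma1}--\ref{lemma2} and closed by Gr\"{o}nwall. I expect the main obstacle to be the $\nabla_H T$ estimate: one horizontal derivative is lost in each of the two integrals above and there is no horizontal dissipation to absorb it, so everything hinges on extracting exactly the needed control from the second-order velocity bounds and on verifying that the forcing is only log-supercritical, so that Lemma \ref{lemma5} indeed applies.
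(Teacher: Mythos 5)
Your outline is, in structure, the argument of \cite{J.Li1}: note first that the present paper does not reprove this proposition at all --- it quotes it verbatim as Theorem 1.1 of \cite{J.Li1} --- and the introduction confirms that the quoted proof runs exactly along your lines (basic energy estimate, maximum principle $\|T\|_\infty\le\|T_0\|_\infty$, auxiliary functions from (\ref{eq16}), second-order $L^\infty_t L^2$ bounds on $\partial_z^2v$, $\nabla_H\partial_zv$, $\nabla_H\eta$, $\nabla_H\theta$ made possible by $v_0\in H^2$, and a logarithmic Sobolev plus logarithmic Gr\"onwall closure of the $L^\infty_tL^q$ bound on $\nabla_HT$; indeed, avoiding those second-order bounds is precisely what the present paper advertises as its improvement). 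However, there is one genuine gap in your plan, and it sits at its foundation: you take the auxiliary scalars to be the plain divergence and curl, ``$\theta=\nabla_H\cdot v$ and $\eta=\nabla_H^\perp\cdot v$,'' whereas (\ref{eq16}) defines $\eta=\nabla_H\cdot v+\Phi$ with $\Phi=\int_{-h}^zT\,d\xi-\frac{1}{2h}\int_{-h}^h\int_{-h}^zT\,d\xi\,dz$. This correction is not cosmetic. Applying $\nabla_H\cdot$ to (\ref{eq8}) does \emph{not} eliminate the pressure (only $\nabla_H^\perp\cdot$ does): the divergence equation carries the forcing $\Delta_H\big(p_s-\int_{-h}^zT\,d\xi\big)$, and when you test it in $L^2$ and integrate by parts, the temperature part produces a term of size $\|\nabla_HT\|_2\,\|\nabla_H(\nabla_H\cdot v)\|_2$, which can only be absorbed if $\int_0^t\|\nabla_HT\|_2^2\,ds$ is already under control. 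It is not: equation (\ref{eq10}) has no horizontal dissipation, so no space-time integrability of $\nabla_HT$ comes for free, and in your own hierarchy the $L^q$ (or even $L^2$) estimate on $\nabla_HT$ comes \emph{after} --- and depends on --- the velocity bounds you are trying to close. The scheme as written is circular; this is exactly the ``mismatching of regularities'' the paper emphasizes.

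The repair is the one built into (\ref{eq16}): with $\eta=\nabla_H\cdot v+\Phi$, the time derivative of $\Phi$, computed from (\ref{eq10}), trades the uncontrollable $\nabla_HT$ forcing for $\partial_zT-wT-\int_{-h}^z\nabla_H\cdot(vT)\,d\xi+f(x,y,t)$ as in (\ref{eq19}) (with the surviving pressure contribution absorbed into the vertical average $f$ via (\ref{eq15})); after moving $\nabla_H$ onto $\eta$ in the weak formulation, these terms are bounded using only $\|T\|_\infty$, $\|\partial_zT\|_2$, and $\|v\|_2$ --- see the estimate of $K_2$ in the proof of Proposition \ref{pro4.2}, where the critical term is controlled by $\|T\|_\infty\|v\|_2\|\nabla_H\eta\|_2$. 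With this correction restored (the swap of the names $\theta$, $\eta$ is immaterial), your hierarchy --- second-order $L^\infty_tL^2$ velocity bounds, then the $L^\infty_tL^q$ estimate on $\nabla_HT$ closed by Lemmas \ref{lemma29} and \ref{lemma5}, then parabolic regularity, extension of the local solution, and the $L^2$ difference estimate for uniqueness and continuous dependence --- matches the cited proof and closes.
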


In the rest of this section, we always assume that $(v, T)$ is the unique strong solution stated in Proposition \ref{prop2.1}.\\

The following proposition has been proved in \cite{J.Li1}, see Proposition 3.2 there.

\begin{proposition}
  \label{pro3.1}
Suppose the assumptions of Proposition \ref{prop2.1} hold. Then, for any $0<\mathcal{T}<\infty$, we have the following:

(i) Basic energy estimate:
\begin{equation*}
 \sup_{0\leq t\leq\mathcal{T}} \left\|(v,T)\right\|_2^2(t) + \int_0^{\mathcal{T}}\left\|(\nabla_H v, \partial_z T)\right\|_2^2dt\leq Ce^{\mathcal{T}}(\|v_0\|_2^2+\|T_0\|_2^2),
\end{equation*}
where $C$ is a positive constant depending only on $h$;

(ii) $L^{\infty}$ estimate on $T$:
\begin{eqnarray*}
 &&\sup_{0\leq t\leq\mathcal{T}} \|T\|_{\infty}(t) \leq \|T_0\|_{\infty};
\end{eqnarray*}

(iii) $L^r$ growth on $v$:
\begin{eqnarray*}
 &&\sup_{0\leq t\leq \mathcal{T}} \|v\|_{r}(t) \leq C\sqrt{r}, \, \,\,\,\forall r \in [4,\infty),
\end{eqnarray*}
for a positive constant $C$ depending only on $h$, $\mathcal{T}$, and $\|(v_0, T_0)\|_{L^\infty}$.
\end{proposition}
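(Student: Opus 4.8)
The plan is to prove all three bounds by the energy method, systematically exploiting the cancellation of the transport terms afforded by the divergence-free structure $\nabla_H\cdot v+\partial_z w=0$ together with the periodicity and symmetry conditions (\ref{eq12})--(\ref{eq13}). I would treat the parts in the order (ii), (i), (iii), since the $L^\infty$ bound on $T$ feeds into the other two.

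For part (ii), I would test the temperature equation (\ref{eq10}) with $|T|^{p-2}T$ for $p\in[2,\infty)$. The diffusion term gives the nonnegative contribution $(p-1)\int_\Omega|T|^{p-2}|\partial_z T|^2\,dxdydz$, while the advective terms $v\cdot\nabla_H T+w\partial_z T$ integrate to zero against $|T|^{p-2}T$ after integrating by parts and using $\nabla_H\cdot v+\partial_z w=0$ and the periodicity in $(x,y,z)$. Hence $\frac{d}{dt}\|T\|_p^p\le 0$, so $\|T(t)\|_p\le\|T_0\|_p$ for every finite $p$; letting $p\to\infty$ yields $\|T(t)\|_\infty\le\|T_0\|_\infty$.

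For part (i), I would test (\ref{eq10}) with $T$ and (\ref{eq8}) with $v$. In the momentum balance the advection $(v\cdot\nabla_H)v+w\partial_z v$ and the Coriolis term $f_0\vec{k}\times v$ are $L^2$-orthogonal to $v$ and drop out, and the barotropic pressure $\nabla_H p_s$ vanishes against $v$ because $p_s=p_s(x,y,t)$ is $z$-independent and $\int_{-h}^h\nabla_H\cdot v\,dz=0$, so that $\int_\Omega\nabla_H p_s\cdot v\,dxdydz=-\int_M p_s\,\nabla_H\cdot\!\big(\int_{-h}^h v\,dz\big)\,dxdy=0$. The only surviving coupling, $\int_\Omega\nabla_H\big(\int_{-h}^z T\,d\xi\big)\cdot v$, is integrated by parts horizontally and bounded by $\tfrac12\|\nabla_H v\|_2^2+C\|T\|_2^2$ via Cauchy--Schwarz and Young, the first term being absorbed by the viscous dissipation. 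Adding the two identities, inserting $\|T(t)\|_2\le\|T_0\|_2$, and applying Gr\"onwall gives the asserted energy bound.

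Part (iii) is the crux. I would test (\ref{eq8}) with $|v|^{r-2}v$; the transport and Coriolis terms vanish as before, and the horizontal viscosity yields a coercive term bounded below by $\int_\Omega|v|^{r-2}|\nabla_H v|^2$, comparable to $\tfrac{1}{r^2}\|\nabla_H(|v|^{r/2})\|_2^2$. The baroclinic pressure $\nabla_H\int_{-h}^z T\,d\xi$ is tamed after integration by parts using the uniform bound $\|T\|_\infty\le\|T_0\|_\infty$ from part (ii), the factor $r$ produced by differentiating $|v|^{r-2}v$ canceling against the $\tfrac1r$ from the $|v|^{r/2}$ substitution. The genuinely delicate term is the barotropic pressure $\int_\Omega\nabla_H p_s\cdot|v|^{r-2}v$: integrating first in $z$ and invoking the elliptic problem (\ref{eq15}), I would estimate $\nabla_H p_s$ in $L^s(M)$ through Calder\'on--Zygmund/Riesz-transform bounds, whose operator norms grow linearly in the exponent $s$. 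The main obstacle is precisely the bookkeeping of this $r$-dependence: balancing the linear-in-$r$ growth of the pressure estimate against the $r^{-2}$-weak dissipation, and closing the resulting differential inequality by Young's inequality and Gr\"onwall, is what forces the rate to be $\sqrt r$ rather than uniform, and controlling that growth uniformly on $[0,\mathcal T]$ is the heart of the argument.
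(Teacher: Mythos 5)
First, note that the paper does not actually prove Proposition \ref{pro3.1}: it is quoted verbatim from \cite{J.Li1} (Proposition 3.2 there), so the comparison below is with the argument in that reference. Your parts (i) and (ii) are correct and are exactly the standard route: testing (\ref{eq10}) with $|T|^{p-2}T$ and letting $p\to\infty$ for the maximum principle, and testing (\ref{eq8}) with $v$ (where $\int_\Omega\nabla_Hp_s\cdot v\,dxdydz=0$ by (\ref{eq9}) and the $z$-independence of $p_s$) together with $\|T(t)\|_2\le\|T_0\|_2$ and Gr\"onwall for the energy bound.

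Part (iii) has a genuine gap, and it sits precisely at the point you yourself flag as ``the heart of the argument.'' Testing (\ref{eq8}) with $|v|^{r-2}v$ does make the transport, Coriolis, and baroclinic pressure terms behave as you say: writing $\nabla_H\cdot(|v|^{r-2}v)=|v|^{r-2}\nabla_H\cdot v+(r-2)|v|^{r-3}v\cdot\nabla_H|v|$ and absorbing the $(r-2)$-part into the $(r-2)\int|v|^{r-2}\big|\nabla_H|v|\big|^2$ piece of the dissipation, the temperature term costs only $Cr\|T_0\|_\infty^2\|v\|_r^{r-2}$ at the level of $\frac1r\frac{d}{dt}\|v\|_r^r$, which integrates to $\|v\|_r\le C\sqrt r$. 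But the same bookkeeping applied to $\int_\Omega p_s\,\nabla_H\cdot(|v|^{r-2}v)$ leaves a term of order $r\int_M|p_s|^2\big(\int_{-h}^h|v|^{r-2}dz\big)dxdy$, which by H\"older requires control of $\|p_s\|_{L^r(M)}$ \emph{uniformly in} $r$. Your proposal to estimate $p_s$ from (\ref{eq15}) by Calder\'on--Zygmund bounds with operator norm $O(r)$ produces $\|p_s\|_{L^r(M)}\lesssim r\,\|v\|_{L^{2r}(\Omega)}^2+\cdots$, which (a) injects at least two extra powers of $r$ after Young's inequality, yielding at best linear growth $\|v\|_r\lesssim r$ rather than $\sqrt r$, and (b) is circular, since it bounds $\|v\|_r$ in terms of $\|v\|_{2r}$. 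The device actually used in \cite{J.Li1} (following \cite{Cao1,J.Li7,J.Li8}) is to remove $p_s$ from the $L^r$ estimate altogether by splitting $v=\bar v+\tilde v$ with $\bar v=\frac1{2h}\int_{-h}^hv\,dz$: the baroclinic part $\tilde v$ solves an equation with no surface pressure (only the zero-vertical-mean temperature forcing), so the computation above closes and gives $\|\tilde v\|_r\le C\sqrt r$; the barotropic part $\bar v$ is controlled through its two-dimensional Navier--Stokes-type equation in $H^1(M)$, and then $\|\bar v\|_{L^r(M)}\le C\sqrt r\,\|\bar v\|_{H^1(M)}$ by the sharp growth of the two-dimensional Sobolev constant. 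Without this (or an equivalent) elimination of $p_s$, the differential inequality you describe does not close at the rate $\sqrt r$.
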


As in \cite{J.Li1}, define
\begin{equation}
   u=\partial_z v,\quad\theta=\nabla_H^{\bot}\cdot v,\quad\eta=\nabla_H\cdot v+\Phi,\label{eq16}
\end{equation}
where $\nabla_H^{\bot}=(-\partial_y,\partial_x)$ and
\begin{equation*}
\Phi=\int_{-h}^zTd\xi-\frac{1}{2h}\int_{-h}^h\int_{-h}^zTd\xi dz.\label{Phi}
\end{equation*}
Then, $u, \theta, \eta$ satisfy (see (3.5)--(3.9) in \cite{J.Li1})
\begin{align}
  \partial_tu &+(v\cdot\nabla_H)u+w\partial_zu-\Delta_H u+ f_0\vec{k}\times u \nonumber\\
&+(u\cdot\nabla_H)v-(\nabla_H\cdot v)u-\nabla_H T=0,\label{eq17}\\
\partial_t\theta &-\Delta_H \theta=-\nabla_H^{\bot}\cdot\Big[(v\cdot \nabla_H)v+w \partial_z v+f_0\vec{k}\times v\Big],\label{eq18}\\
 \partial_t\eta &-\Delta_H \eta=-\nabla_H\cdot\Big[(v\cdot \nabla_H)v+w \partial_z v+f_0\vec{k}\times v\Big]\nonumber\\
&+\partial_z T -w T-\int_{-h}^z \nabla_H\cdot(vT)d\xi+f(x,y,t),\label{eq19}
\end{align}
where
\begin{align*}
  f(x,y,t)=&\frac{1}{2h}\int_{-h}^h\nabla_H\cdot\Big(\nabla_H\cdot(v\otimes v)+f_0\vec{k}\times v\Big)dz\nonumber\\
   &+\frac{1}{2h}\int_{-h}^h\Big(\int_{-h}^z\nabla_H\cdot(vT)d\xi +w T\Big)dz.
\end{align*}

For simplicity of notations, for initial data $(v_0, T_0)$, set
\begin{align}
\|(v_0, T_0)\|_{X_1}:=\|(v_0, T_0)\|_{H^1\cap L^\infty}+\|(\partial_z v_0, \nabla_H T_0)\|_{L^q}+\|v_0\|_{L_z^1(B^1_{q,2}(M))} \label{normX1}
\end{align}
and assume
\begin{equation}\label{M0}
\|(v_0, T_0)\|_{X_1}\leq M_0
\end{equation}
for some positive constant $M_0$.

In the rest of this section, we derive a series of a priori estimates depending only on $q$, $h$, $M_0$, and the positive time $\mathcal{T}$ up to which the estimates are considered. The a priori estimates are carried out in five subsections as follows.

\subsection{A priori $L_t^{\infty}(H^1)$ estimate on $v$}

\begin{proposition}
  \label{pro4.2}
Suppose the assumptions of Proposition \ref{prop2.1} hold. Then, for any positive time $\mathcal T$, it holds that
\begin{align*}
  &&\sup_{0\leq t\leq \mathcal{T}}(\|(\eta, \theta, u)\|_2^2(t)+\|u\|_q^q(t))+\int_{0}^{\mathcal{T}}\left(
  \|\nabla_H(\eta, \theta, u)\|_2^2+\big\||u|^{\frac{q}{2}-1}\nabla_Hu\big\|_2^2\right)dt\leq C
\end{align*}
for a positive constant $C$ depending only on $q$, $h$, $\mathcal{T}$, and $M_0$ (see (\ref{M0})), where $\eta$ and $\theta$ are given by (\ref{eq16}).
\end{proposition}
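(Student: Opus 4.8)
The plan is to derive energy estimates for the coupled system $(\ref{eq17})$--$(\ref{eq19})$ governing $u=\partial_z v$, $\theta=\nabla_H^\perp\cdot v$, and $\eta=\nabla_H\cdot v+\Phi$, using the basic controls from Proposition~\ref{pro3.1} (in particular $\sup_t\|(v,T)\|_2$, $\sup_t\|T\|_\infty\leq\|T_0\|_\infty$, and the $L^r$ growth $\|v\|_r\leq C\sqrt r$) as the backbone. I would first handle the $L^2$ estimates on $(\eta,\theta,u)$ simultaneously, then upgrade $u$ to the $L^q$ estimate, and finally invoke the logarithmic Gr\"onwall inequality of Lemma~\ref{lemma5} to close everything into a bound depending only on $q,h,\mathcal T,M_0$.

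\medskip

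\textbf{Step 1 (the $L^2$ estimates).} I would test $(\ref{eq17})$ with $u$, $(\ref{eq18})$ with $\theta$, and $(\ref{eq19})$ with $\eta$, and add. The horizontal Laplacian terms produce the good dissipation $\|\nabla_H(\eta,\theta,u)\|_2^2$. The transport terms $(v\cdot\nabla_H)u$ and $w\partial_z u$ in $(\ref{eq17})$ integrate to zero (or nearly so) after using incompressibility $(\ref{eq3})$ and integration by parts, as do the divergence-form forcing terms on the right of $(\ref{eq18})$--$(\ref{eq19})$ once moved onto $\nabla_H\theta$ or $\nabla_H\eta$ and absorbed into the dissipation by Young's inequality. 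The genuinely dangerous terms are the quadratic ones $\int (u\cdot\nabla_H)v\cdot u$, the vertical-velocity contributions, and the temperature source terms; these I would control with the anisotropic product estimates of Lemmas~\ref{lemma1} and~\ref{lemma2}, using $w=-\int_{-h}^z\nabla_H\cdot v\,d\xi$ to trade a $w$ factor for a $\nabla_H v$ factor, and using the $L^\infty$ bound on $T$ together with $\|v\|_r\leq C\sqrt r$ to bound the forcing $f(x,y,t)$. The aim is to arrive at a differential inequality of the schematic form $\frac{d}{dt}\|(\eta,\theta,u)\|_2^2+\|\nabla_H(\eta,\theta,u)\|_2^2\leq \big(\text{logarithmic factor}\big)\,\|(\eta,\theta,u)\|_2^2+\text{integrable}$.

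\medskip

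\textbf{Step 2 (the $L^q$ estimate on $u$).} I would multiply $(\ref{eq17})$ by $|u|^{q-2}u$ and integrate. The diffusion now yields the good term $\big\||u|^{q/2-1}\nabla_H u\big\|_2^2$, while the transport terms again cancel via incompressibility. The problematic contributions are $\int |u\cdot\nabla_H v|\,|u|^{q-1}$ and the source $\int\nabla_H T\cdot|u|^{q-2}u$, which couples to $\nabla_H T$ only at the $L^2$-in-$u$ level here and is harmless at this stage. For the stretching-type term I expect to split $\nabla_H v$ into its divergence and curl parts $\eta-\Phi$ and $\theta$, and to estimate using H\"older, the anisotropic Sobolev embedding, and interpolation to absorb derivative loss into the dissipation, producing again a logarithmic-in-$(\|\nabla_H u\|_2+\cdots)$ prefactor.

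\medskip

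\textbf{The main obstacle} will be controlling the stretching term $\int (u\cdot\nabla_H)v\cdot u$ (and its $L^q$ analogue) together with the vertical-velocity and temperature-forcing terms \emph{without} appealing to any second-derivative information on $v_0$; this is precisely the delicacy flagged in the introduction, since $\nabla_H v$ is only borderline controllable by $(\eta,\theta)$ and their horizontal derivatives. The strategy to survive this is to keep the estimate at the level of a logarithmic Gr\"onwall inequality rather than a linear one: after bounding all bad terms by quantities of the form $A[\log A+\log(A+B+e)+f(t)]+\ell(t)$, where $A=\|(\eta,\theta,u)\|_2^2+\|u\|_q^q+e$ and $B$ is the dissipation, with $f,\ell\in L^1(0,\mathcal T)$ coming from the basic energy bounds of Proposition~\ref{pro3.1}, I would apply Lemma~\ref{lemma5} to conclude the stated finite-time bound. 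The verification that the source terms integrate in time to $L^1$ quantities, using $\int_0^{\mathcal T}\|\nabla_H v\|_2^2\,dt<\infty$ and $\sup_t\|T\|_\infty<\infty$, is what makes the logarithmic Gr\"onwall applicable and ultimately yields the constant $C=C(q,h,\mathcal T,M_0)$.
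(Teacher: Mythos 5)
Your plan matches the paper's proof in all essentials: test (\ref{eq17}) with $|u|^{q-2}u$ and with $u$, (\ref{eq18}) with $\theta$, (\ref{eq19}) with $\eta$, control the stretching and vertical-velocity terms via the anisotropic estimates and the $\sqrt r$-growth $\|v\|_r\leq C\sqrt r$ (which, through the logarithmic anisotropic Sobolev inequality of Lemma \ref{lemma29}, converts the factor $\|v\|_\infty^2$ into $C\log(A_1+B_1+e)$), and close with the logarithmic Gr\"onwall inequality of Lemma \ref{lemma5}. The one point your sketch glosses over is the source term $\int_{\Omega}\nabla_HT\cdot|u|^{q-2}u\,dxdydz$: it is harmless only after integrating by parts onto $|u|^{q-2}u$ and invoking $\|T\|_\infty\leq\|T_0\|_\infty$, since no bound on $\nabla_HT$ in any $L^p$ is yet available at this stage of the argument.
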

\begin{proof}
Multiplying \ref{eq17} with $|u|^{q-2}u$, it follows from integration by parts, Proposition \ref{pro3.1}, and the Young inequality that
\begin{eqnarray*}
&&\frac{1}{q}\frac{d}{dt}\|u\|_q^q+\int_{\Omega}|u|^{q-2}\left[|\nabla_Hu|^2+(q-2)\big|\nabla_H|u|\big|^2\right]dxdydz\\
&=&\int_{\Omega} [\nabla_H T+(\nabla_H\cdot v)u-(u\cdot\nabla_H)v]|u|^{q-2}udxdydz\\
&\leq& C\int_{\Omega}\left(|T||u|^{q-2}|\nabla_Hu|+|v||u|^{q-1}|\nabla_Hu|\right)dxdydz\\
&\leq& \frac{1}{2}\int_{\Omega}|u|^{q-2}|\nabla_Hu|^2dxdydz+C\int_{\Omega}\left(|T|^2|u|^{q-2}+|v|^2|u|^q\right)dxdydz\\
&\leq& \frac{1}{2}\int_{\Omega}|u|^{q-2}|\nabla_Hu|^2dxdydz+C(\|v\|_{\infty}^2+1)(\|u\|_q^q+1)
\end{eqnarray*}
and thus
\begin{align}
\frac{d}{dt}\|u\|_q^q+\frac{q}{2}\int_{\Omega}|u|^{q-2}|\nabla_Hu|^2dxdydz\leq C(\|v\|_{\infty}^2+1)(\|u\|_q^q+1). \label{42q}
\end{align}
Similarly,
\begin{equation}
\frac{d}{dt}\|u\|_2^2+\|\nabla_H u\|_{2}^2\leq C(\|v\|_{\infty}^2+1)(\|u\|_2^2+1).\label{ADD24.5.5-2}
\end{equation}

Let $\alpha\geq q$ to be determined.
Multiplying (\ref{42q}) with $\|u\|_q^{\alpha-q}$, it follows from the Young inequality that
 \begin{align*}
\frac{d}{dt}\|u\|_q^{\alpha}\leq C_{\alpha}(\|v\|_{\infty}^2+1)(\|u\|_q^{\alpha}+1), \quad\forall \alpha\geq q.
\end{align*}
Summing this with (\ref{ADD24.5.5-2}) and using the Young inequality yield
\begin{eqnarray}
\frac{d}{dt}\left(\|u\|_q^{\alpha}+\|u\|_q^q+\|u\|_2^2\right)+\big\||u|^{\frac{q}{2}-1}\nabla_Hu\big\|_2^2+\|\nabla_H u\|_{2}^2\nonumber\\
\leq C_\alpha(\|v\|_{\infty}^2+1)(\|u\|_q^{\alpha}+1), \quad \forall \alpha\geq q.\label{42c}
\end{eqnarray}

Multiplying (\ref{eq18}) with $\theta$ (see (\ref{eq16})), it follows from integration by parts that
\begin{align}
\frac{1}{2}\frac{d}{dt}\|\theta\|_2^2+\|\nabla_H \theta\|_2^2=\int_{\Omega}\Big[(v\cdot \nabla_H)v+w \partial_z v+f_0\vec{k}\times v\Big]\cdot\nabla_H^{\bot}\theta dxdydz. \label{eq328-0}
\end{align}
The terms on the right-hand side of (\ref{eq328-0}) are estimated as follows.
Since $\|\nabla_Hv\|_2^2=\|\nabla_H\cdot v\|_2^2+\|\nabla_H^\perp\cdot v\|_2^2$ and by Proposition \ref{pro3.1},
one deduces
\begin{eqnarray}
&&\int_{\Omega}\Big[(v\cdot \nabla_H)v+f_0\vec{k}\times v\Big]\cdot\nabla_H^{\bot}\theta dxdydz\nonumber\\
&\leq& \frac{1}{10}\|\nabla_H \theta\|_2^2 + C(\|v\|_{\infty}^2\|\nabla_H v\|_2^2+\|v\|_2^2)\nonumber\\
&\leq& \frac{1}{10}\|\nabla_H \theta\|_2^2 + C(\|v\|_{\infty}^2+1)(\|(\theta, \eta)\|_2^2+1),\label{eq328-1}
\end{eqnarray}
where the definitions of $\eta$ and $\theta$ (see (\ref{eq16})) were used.
By Proposition \ref{pro3.1} and recalling the definitions of $\eta$ and $\theta$, one deduces by the H\"{o}lder, Minkowski, Ladyzhenskaya, and Young inequalities that
\begin{eqnarray}
&&\int_{\Omega}w \partial_z v\cdot\nabla_H^{\bot}\theta dxdydz \nonumber\\
&=&-\int_{\Omega}\left(\int_{-h}^z\nabla_H\cdot vd\xi\right)u\cdot\nabla_H^{\bot}\theta dxdydz \nonumber\\
& \leq& \int_M\left(\int_{-h}^h |\eta|dz\right) \left(\int_{-h}^h |u||\nabla_H\theta| dz\right)dxdy+C\int_{\Omega}|u||\nabla_H\theta|dxdydz\nonumber\\
& \leq& \int_M\left(\int_{-h}^h|\eta|dz\right)\left(\int_{-h}^h|u|^2dz\right)^{\frac{1}{2}}\left(\int_{-h}^h|\nabla_H\theta|^2dz\right)^{\frac{1}{2}}dxdy
+C\|u\|_2\|\nabla_H\theta\|_2\nonumber\\
& \leq& \left\|\int_{-h}^h |\eta|dz\right\|_{L^{\frac{2q}{q-2}}(M)}\left\|\int_{-h}^h|u|^2dz\right\|_{L^{\frac{q}{2}}(M)}^{\frac{1}{2}}\|\nabla_H\theta\|_2+C\|u\|_2\|\nabla_H\theta\|_2\nonumber\\
& \leq&\left(\int_{-h}^h\|\eta\|_{\frac{2q}{q-2},M}dz\right)\left(\int_{-h}^h\|u\|_{q,M}^2dz\right)^{\frac{1}{2}}\|\nabla_H\theta\|_2+C\|u\|_2\|\nabla_H\theta\|_2\nonumber\\
& \leq& C\left(\int_{-h}^h\|\eta\|_{2,M}^{\frac{q-2}{q}}\|\eta\|_{H^1(M)}^{\frac{2}{q}}dz\right)\|u\|_q\|\nabla_H\theta\|_2+C\|u\|_2\|\nabla_H\theta\|_2\nonumber\\
& \leq& C\|\eta\|_2^{\frac{q-2}{q}}\left(\|\nabla_H \eta\|_2+\|\eta\|_2\right)^{\frac{2}{q}}\|u\|_q\|\nabla_H \theta\|_2+C\|u\|_2\|\nabla_H\theta\|_2\nonumber\\
& \leq& \frac{1}{10}\|\nabla_H(\eta, \theta)\|_2^2+C\|\eta\|_2^2\left(\|u\|_q^{\frac{2q}{q-2}}+1\right)+C\|u\|_2^2\nonumber\\
& \leq& \frac{1}{10}\|\nabla_H(\eta, \theta)\|_2^2+C\left(1+\|\nabla_H v\|_2^2\right)\left(\|u\|_q^{\frac{2q}{q-2}}+\|u\|_2^2+1\right). \label{42d}
\end{eqnarray}
Substituting (\ref{eq328-1}) and (\ref{42d}) into (\ref{eq328-0}) yields
\begin{eqnarray}
\frac{d}{dt}\|\theta\|_2^2+2\|\nabla_H\theta\|_2^2&\leq& \frac{2}{5}\|\nabla_H(\eta, \theta)\|_2^2+C\left(\|v\|_{\infty}^2+\|\nabla_H v\|_2^2+1\right)\nonumber\\
&&\times\left(\|(\theta, \eta)\|_2^2+\|u\|_q^{\frac{2q}{q-2}}+\|u\|_2^2+1\right).\label{eq328-2}
\end{eqnarray}

Multiplying (\ref{eq19}) with $\eta$, it follows from integration by parts that
\begin{align}
\frac{d}{2dt}&\|\eta\|_2^2+\|\nabla_H \eta\|_2^2=\int_{\Omega}\Big[(v\cdot \nabla_H)v+w \partial_z v+f_0\vec{k}\times v\Big]\cdot\nabla_H \eta dxdydz\nonumber\\
& +\int_{\Omega}\left[\left(\partial_z T -w T\right)\eta+\left(\int_{-h}^z vTd\xi\right)\cdot\nabla_H\eta\right] dxdydz=:K_1+K_2.\label{42e}
\end{align}
Estimate for $K_1$ and $K_2$ are given as follows. For $K_1$, similar arguments to (\ref{eq328-1}) and (\ref{42d}) yield
$$
K_1\leq \frac{1}{5}\|\nabla_H (\eta, \theta)\|_2^2 + C\left(\|v\|_{\infty}^2+\|\nabla_H v\|_2^2+1\right)\left(\|(\theta, \eta)\|_2^2+\|u\|_q^{\frac{2q}{q-2}}+\|u\|_2^2+1\right).
$$
For $K_2$, since $\|\nabla_Hv\|_2^2=\|\nabla_H\cdot v\|_2^2+\|\nabla_H^\perp\cdot v\|_2^2$, it follows from the H\"{o}lder and Young
inequalities, the definitions of $\eta$ and $\theta$, and Proposition \ref{pro3.1} that
\begin{eqnarray*}
K_2&\leq& \left(\|\partial_z T\|_2+\|T\|_{\infty}\|w\|_2\right)\|\eta\|_2+C\|T\|_\infty\|v\|_2\|\nabla_H \eta\|_2\\
&\leq &\frac{1}{10}\|\nabla_H(\eta,\theta)\|_2^2+C(\|\partial_z T\|_2^2+\|\eta\|_2^2+\|\nabla_H v\|_2^2+\|v\|_2^2)\\
&\leq &\frac{1}{10}\|\nabla_H(\eta,\theta)\|_2^2+C(\|\partial_z T\|_2^2+\|(\theta,\eta)\|_2^2+1).
\end{eqnarray*}
Substituting the estimates for $K_1$ and $K_2$ into (\ref{42e}) yields
\begin{eqnarray}
\frac{d}{dt}\|\eta\|_2^2+2\|\nabla_H \eta\|_2^2&\leq& \frac{3}{5}\|\nabla_H(\eta, \theta)\|_2^2+C\|\partial_z T\|_2^2+C\left(\|v\|_{\infty}^2+\|\nabla_H v\|_2^2+1\right)\nonumber\\
& &\times\left(\|(\theta, \eta)\|_2^2+\|u\|_q^{\frac{2q}{q-2}}+\|u\|_2^2+1\right).\label{eq328-3}
\end{eqnarray}

Choosing $\alpha=\max\left\{\frac{2q}{q-2},q\right\}$ in (\ref{42c}) and summing up (\ref{42c}), (\ref{eq328-2}), and (\ref{eq328-3}), one obtains by the Young inequality that
\begin{eqnarray}
&& \frac{d}{dt}\left(\|(\eta,\theta, u)\|_2^2+\|u\|_q^{\alpha}+\|u\|_q^q\right)+\|\nabla_H(u, \eta, \theta)\|_2^2+\big\||u|^{\frac{q}{2}-1}\nabla_Hu\big\|_2^2\nonumber\\
&\leq&  C(\|v\|_{\infty}^2+\|\nabla_H v\|_2^2+1)\left(\|u\|_q^\alpha+\|u\|_q^{\frac{2q}{q-2}}+\|u\|_2^2+\|(\eta,\theta)\|_2^2+1\right)+C\|\partial_z T\|_2^2\nonumber\\
&\leq&  C(\|v\|_{\infty}^2+\|\nabla_H v\|_2^2+1)\left(\|u\|_q^\alpha+\|(\eta,\theta)\|_2^2+1\right)+C\|\partial_z T\|_2^2.\label{ADD24.5.5-1}
\end{eqnarray}

Denote
$$A_1=\|(\eta,\theta, u)\|_2^2+\|u\|_q^{\alpha}+\|u\|_q^q+e,\quad B_1=\|\nabla_H(u, \eta, \theta)\|_2^2+\big\||u|^{\frac{q}{2}-1}\nabla_Hu\big\|_2^2.$$
Thanks to Lemma \ref{lemma29} and Proposition \ref{pro3.1}, we have
\begin{eqnarray*}
\|v\|_\infty& \leq& C\max\left\{1,\sup_{r\geq1}\frac{\|v\|_r}{\sqrt{r}}\right\}\log^{\frac{1}{2}}\left(\|\nabla_H v\|_4+\|v\|_4+\|u\|_q+\|v\|_q+e\right)\\
& \leq&C\log^{\frac{1}{2}}\left(\|\nabla_H\cdot v\|_4+\|\nabla_H^{\bot}\cdot v\|_4+\|u\|_q+e\right)\\
& \leq& C\log^{\frac{1}{2}}\left(\|\eta\|_4+\|\theta\|_4+\|u\|_q+e\right)\\
& \leq&C\log^{\frac{1}{2}}\left(\|(\eta, \theta)\|_2+\|\nabla_H(\eta, \theta)\|_2+\|\partial_z(\eta, \theta)\|_2+\|u\|_q+e\right)\\
& \leq&C\log^{\frac{1}{2}}\left(\|\nabla_H(\eta, \theta, u) \|_2+\|(\eta,\theta)\|_2+\|u\|_q+e\right)\\
& \leq&C\log^{\frac{1}{2}}\left(A_1+B_1+e\right).
\end{eqnarray*}
Therefore, by (\ref{ADD24.5.5-1}), one has
\begin{eqnarray*}
\frac{d}{dt}A_1+B_1\leq C\Big(\|\nabla_H v\|_2^2+1+\log\left(A_1+B_1+e\right)\Big)A_1+C\|\partial_z T\|_2^2,
\end{eqnarray*}
from which, by Lemma \ref{lemma5}, the conclusion follows.
\end{proof}

As a corollary of Proposition \ref{pro4.2}, one has the following corollary.

\begin{corollary}\label{COR1-1}
Suppose the assumptions of Proposition \ref{prop2.1} hold and $\mathcal T$ is any given positive time. Then, for any $t\in(0,\mathcal T)$, it holds that
\begin{eqnarray*}
\|\nabla_Hv\|_2(t)&\leq&C,\\
\left(\left\|\int_{-h}^h|\nabla_H v|dz\right\|_{L^{2q}(M)}^2+\int_{-h}^h\left\|\nabla_H v\right\|_{2q,M}^2dz\right)(t) &\leq&
C \left(\|\nabla_H(\eta, \theta)\|_2^{2-\frac2q}+1\right)(t),\\
\left(\left\|\int_{-h}^h|u|dz\right\|_{L^{2q}(M)}^2+\int_{-h}^h\left\|u\right\|_{2q,M}^2dz\right)(t) &\leq& C\left(1+\big\|\nabla_H|u|^{\frac{q}{2}}\big\|_2^{\frac{2}{q}}\right)(t),
\end{eqnarray*}
where $C$ is a positive constant $C$ depending only on $q$, $h$, $M_0$, and $\mathcal{T}$.
\end{corollary}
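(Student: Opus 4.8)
Corollary \ref{COR1-1} consists of three separate estimates, all of which should follow from Proposition \ref{pro4.2} by combining interpolation in the horizontal variable $M$ with integration in $z$. The first estimate, $\|\nabla_Hv\|_2(t)\leq C$, is immediate: since $\|\nabla_Hv\|_2^2=\|\nabla_H\cdot v\|_2^2+\|\nabla_H^\perp\cdot v\|_2^2=\|\eta-\Phi\|_2^2+\|\theta\|_2^2$ by the definitions in (\ref{eq16}), and since $\Phi$ is controlled by $\|T\|_2$ (hence by the basic energy estimate in Proposition \ref{pro3.1}(i)) while $\|(\eta,\theta)\|_2$ is uniformly bounded by Proposition \ref{pro4.2}, the bound follows at once. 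So the real content is in the second and third inequalities.

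**The second and third estimates.** For these I would work on a fixed horizontal slice and use a two-dimensional Gagliardo--Nirenberg (Ladyzhenskaya-type) inequality to pass from the $L^2(M)$ control supplied by Proposition \ref{pro4.2} up to the $L^{2q}(M)$ norms appearing on the left. Concretely, for a periodic scalar $g$ on $M$ one has the interpolation $\|g\|_{2q,M}\leq C\|g\|_{2,M}^{1/q}\|g\|_{H^1(M)}^{1-1/q}$, valid for $q\in(2,\infty)$ since $2q\in(4,\infty)$ and we are in two dimensions. Applying this with $g=\nabla_Hv(\cdot,z)$ (equivalently, working through $\eta,\theta$ so that $\nabla_Hv$ is recovered from $(\eta,\theta,\Phi)$) gives a pointwise-in-$z$ bound of the form $\|\nabla_Hv(\cdot,z)\|_{2q,M}^2\leq C\|\nabla_Hv(\cdot,z)\|_{2,M}^{2/q}\|\nabla_Hv(\cdot,z)\|_{H^1(M)}^{2-2/q}$. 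Integrating in $z$ and using H\"older's inequality in $z$ with the split exponents $\tfrac1q+\tfrac{q-1}{q}=1$, together with the uniform $L^2(\Omega)$ bound on $\nabla_Hv$ from the first estimate and the $\int_0^{\mathcal T}\|\nabla_H(\eta,\theta)\|_2^2$-type control from Proposition \ref{pro4.2}, produces the claimed $\|\nabla_H(\eta,\theta)\|_2^{2-2/q}+1$ right-hand side. For the quantity $\big\|\int_{-h}^h|\nabla_Hv|\,dz\big\|_{L^{2q}(M)}^2$ I would first apply Minkowski's integral inequality to move the $L^{2q}(M)$ norm inside the $z$-integral, reducing it to the same slicewise estimate. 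The third inequality is handled identically with $v$ replaced by $u=\partial_zv$: the only change is that the gradient term $\|\nabla_Hu\|_{H^1(M)}$ is expressed through $\big\||u|^{q/2-1}\nabla_Hu\big\|_2$, i.e.\ through $\nabla_H|u|^{q/2}$, which is why the exponent $2/q$ reappears in the form $\big\|\nabla_H|u|^{q/2}\big\|_2^{2/q}$.

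**The main obstacle.** The routine part is the two-dimensional interpolation; the delicate bookkeeping is matching the exponents so that the power $2-2/q$ (for the $v$ estimate) and $2/q$ (for the $u$ estimate) come out exactly, and in particular carrying out the H\"older split in $z$ correctly so that the uniformly bounded factors ($\|\nabla_Hv\|_2$, and for $u$ the quantity $\|u\|_q$ controlled in Proposition \ref{pro4.2}) separate cleanly from the factors that are only time-integrable. For the third estimate the subtlety is that the natural dissipation term from Proposition \ref{pro4.2} is $\big\||u|^{q/2-1}\nabla_Hu\big\|_2^2=\tfrac{4}{q^2}\big\|\nabla_H|u|^{q/2}\big\|_2^2$, so one must pass through the function $|u|^{q/2}$: write $\|u\|_{2q,M}^q=\big\||u|^{q/2}\big\|_{4,M}^2$ and apply Ladyzhenskaya's inequality to $|u|^{q/2}$ on $M$, then combine with the uniform $\|u\|_q$ bound before taking the $q$-th root to recover the stated exponent $2/q$. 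Once the exponents are pinned down, each inequality reduces to an application of the H\"older and Young inequalities and the estimates already collected in Proposition \ref{pro4.2} and Proposition \ref{pro3.1}.
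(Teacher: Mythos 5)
Your proposal is correct and follows essentially the same route as the paper: the first bound from $\|\nabla_Hv\|_2^2=\|\nabla_H\cdot v\|_2^2+\|\nabla_H^\perp\cdot v\|_2^2$ and Propositions \ref{pro3.1}--\ref{pro4.2}, the second from the two-dimensional div--curl elliptic estimate combined with Gagliardo--Nirenberg interpolation and H\"older in $z$, the third from Ladyzhenskaya applied to $|u|^{q/2}$, and Minkowski for the $\bigl\|\int_{-h}^h|\cdot|\,dz\bigr\|_{L^{2q}(M)}$ terms. One caution: perform the elliptic estimate at the $L^{2q}(M)$ level \emph{before} interpolating, so that Gagliardo--Nirenberg is applied to $\eta$ and $\theta$ (with $\Phi$ bounded simply via $\|T\|_\infty$) rather than to $\nabla_Hv$ itself, since the latter would produce $\|\nabla_Hv\|_{H^1(M)}$ and hence a $\nabla_H\Phi$, i.e. $\nabla_HT$, contribution not present on the stated right-hand side --- your parenthetical ``working through $\eta,\theta$'' is the correct order, and the time-integrated control of $\|\nabla_H(\eta,\theta)\|_2$ is not needed here since the estimate is pointwise in $t$.
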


\begin{proof}
The first conclusion is a direct corollary of Propositions \ref{pro3.1}--\ref{pro4.2} by noticing that $\|\nabla_Hv\|_2^2=\|\nabla_H\cdot v\|_2^2+\|\nabla_H^\perp\cdot v\|_2^2$.
By the elliptic estimate, it follows from the Gagliardo-Nirenberg and H\"{o}lder inequalities, the definitions of $\eta, \theta, \Phi$, and Propositions \ref{pro3.1}--\ref{pro4.2} that
\begin{eqnarray*}
 \int_{-h}^h\left\|\nabla_H v\right\|_{2q,M}^2dz
& \leq&C\int_{-h}^h\left\| (\nabla_H\cdot v,\nabla_H^\perp\cdot v)\right\|_{2q,M}^2dz\\
&\leq&\int_{-h}^h\|(\eta, \Phi, \theta)\|_{2q,M}^2dz\\
& \leq& C\int_{-h}^h \|(\eta, \theta)\|_{2,M}^{\frac2q} \|(\eta, \theta,\nabla_H \eta, \nabla_H\theta)\|_{2,M} ^{2-\frac2q} dz+C\\
& \leq&C\left(\|\nabla_H(\eta, \theta)\|_2^{2-\frac2q}+1\right)
\end{eqnarray*}
and
\begin{eqnarray*}
\int_{-h}^h\left\|u\right\|_{2q,M}^2dz&=& \int_{-h}^h \big\||u|^{\frac{q}{2}}\big\|_{4,M}^{\frac{4}{q}}dz\\
& \leq& C\int_{-h}^h \big\||u|^{\frac{q}{2}}\big\|_{2,M}^{\frac{2}{q}}\left(\big\|\nabla_H|u|^{\frac{q}{2}}\big\|_{2,M}+\big\||u|^{\frac{q}{2}}\big\|_{2,M}\right)^{\frac{2}{q}}dz\\
& \leq&C\left(1+\big\|\nabla_H|u|^{\frac{q}{2}}\big\|_2^{\frac{2}{q}}\right).
\end{eqnarray*}
The other estimates follow from the above two estimates by further using the H\"{o}lder and Minkowski inequalities.
\end{proof}

\subsection{Anisotropic parabolic estimates on $v, \eta, \theta$}

For simplifying notations, throughout this subsection, we use $L_{(0,t)}^m$, $L^m_z$, and $L^m_M$, respectively, to denote the $L^m$ norms in the time variable, $z$ variable, and $(x,y)$ variables, over $(0, t)$, $(-h,h),$ and $M$.

\begin{proposition}
\label{PRO-PARA-EST1}
Suppose the assumptions of Proposition \ref{prop2.1} hold and $\mathcal T$ is any given positive time. Then, for any $t\in(0,\mathcal T)$, it holds that
  \begin{eqnarray*}
 \|\nabla_H^2 v\|_{L_z^1L^2_{(0,t)}L_M^q}^2\leq C+C\int_0^t\|\nabla_HT\|_q^2ds \label{eq34}
\end{eqnarray*}
for a positive constant $C$ depending only on $q, h, M_0$, and $\mathcal T$.
\end{proposition}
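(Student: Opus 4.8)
The plan is to freeze the vertical variable $z$, read \eqref{eq8} as a two-dimensional inhomogeneous heat equation on the torus $M$ for the horizontal slice $v(\cdot,z,\cdot)$, apply parabolic maximal regularity in $L^2_{(0,t)}L^q_M$, and then integrate the resulting pointwise-in-$z$ bound over $z\in(-h,h)$. Concretely, I write \eqref{eq8} as
\[
\partial_t v-\Delta_H v=F,\qquad F:=-(v\cdot\nabla_H)v-w\partial_z v-f_0\vec{k}\times v-\nabla_H p_s+\int_{-h}^z\nabla_H T\,d\xi .
\]
For each fixed $z$, maximal $L^2$-in-time regularity for the heat semigroup on $M$ gives
\[
\|\nabla_H^2 v(\cdot,z,\cdot)\|_{L^2_{(0,t)}L^q_M}\le C\|v_0(\cdot,z)\|_{B^1_{q,2}(M)}+C\|F(\cdot,z,\cdot)\|_{L^2_{(0,t)}L^q_M},
\]
the appearance of the trace space $B^1_{q,2}(M)=\big(L^q(M),W^{2,q}(M)\big)_{1/2,2}$ being exactly why the hypothesis $v_0\in L_z^1(B^1_{q,2}(M))$ is imposed. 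Integrating over $z$ and using Minkowski's inequality reduces the proposition to bounding $\int_{-h}^h\|F(\cdot,z,\cdot)\|_{L^2_{(0,t)}L^q_M}\,dz$ by $C+C\big(\int_0^t\|\nabla_H T\|_q^2\,ds\big)^{1/2}$.

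The lower-order and temperature terms are routine. The Coriolis term is controlled by $\|v\|_q$, bounded by Proposition \ref{pro3.1}. The baroclinic term $\int_{-h}^z\nabla_H T\,d\xi$ has $L^q_M$ norm $\le\int_{-h}^h\|\nabla_H T\|_{q,M}\,d\xi$, and Hölder in $\xi$ (using $q>2$) converts its contribution into $C\big(\int_0^t\|\nabla_H T\|_q^2\,ds\big)^{1/2}$, producing the right-hand side of the claim. For the surface pressure, elliptic $L^q$-regularity on $M$ applied to \eqref{eq15} bounds $\|\nabla_H p_s\|_{q,M}$ through the double Riesz transform $\nabla_H(-\Delta_H)^{-1}\nabla_H\cdot$, which is $L^q_M$-bounded, by the $L^q_M$ norm of the vertical average of $\nabla_H\cdot(v\otimes v)+f_0\vec{k}\times v-\int_{-h}^z\nabla_H T\,d\xi$; this reproduces the same three types of terms and needs no new work. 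The nonlinear term $(v\cdot\nabla_H)v$ is split as $\|v\nabla_H v\|_{q,M}\le\|v\|_{2q,M}\|\nabla_H v\|_{2q,M}$, so that $\int_{-h}^h\|v\nabla_H v\|_{q,M}\,dz\le(\sup_z\|v\|_{2q,M})(2h)^{1/2}\big(\int_{-h}^h\|\nabla_H v\|_{2q,M}^2\,dz\big)^{1/2}$; the supremum lies in $L^\infty_{(0,t)}$ by the $L^r$-growth of Proposition \ref{pro3.1}, a Lemma \ref{pro3.2}-type slicing bound, and $\sup_t\|u\|_2\le C$, while the last factor lies in $L^2_{(0,t)}$ by Corollary \ref{COR1-1} and Proposition \ref{pro4.2}.

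The main obstacle is the vertical advection term $w\partial_z v=wu$, with $u=\partial_z v$ and $w=-\int_{-h}^z\nabla_H\cdot v\,d\xi$, a product of two critically controlled factors whose anisotropic structure must be exploited. I would estimate, by Hölder in $M$ and then in $z$,
\[
\int_{-h}^h\|wu\|_{q,M}\,dz\le\Big(\sup_z\|w\|_{2q,M}\Big)\int_{-h}^h\|u\|_{2q,M}\,dz\le (2h)^{1/2}\,P\,Q,
\]
where $P:=\big\|\int_{-h}^h|\nabla_H v|\,dz\big\|_{2q,M}$ dominates $\sup_z\|w\|_{2q,M}$ and $Q:=\big(\int_{-h}^h\|u\|_{2q,M}^2\,dz\big)^{1/2}$; these are precisely the quantities estimated in Corollary \ref{COR1-1}, giving $P^2\le C(\|\nabla_H(\eta,\theta)\|_2^{2-2/q}+1)$ and $Q^2\le C(1+\|\nabla_H|u|^{q/2}\|_2^{2/q})$. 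The decisive point is the exact matching of time exponents: by Hölder in time with conjugate exponents $\tfrac{q}{q-1}$ and $q$,
\[
\int_0^t P^2Q^2\,ds\le\Big(\int_0^t P^{2q/(q-1)}\,ds\Big)^{\frac{q-1}{q}}\Big(\int_0^t Q^{2q}\,ds\Big)^{\frac1q},
\]
and since $\big(2-\tfrac2q\big)\tfrac{q}{q-1}=2$ and $\tfrac2q\cdot q=2$, the first factor reduces to $\int_0^t(\|\nabla_H(\eta,\theta)\|_2^2+1)\,ds$ and the second to $\int_0^t(1+\|\nabla_H|u|^{q/2}\|_2^2)\,ds$, both finite by Proposition \ref{pro4.2} (recalling $\nabla_H|u|^{q/2}=\tfrac q2|u|^{q/2-1}\nabla_H u$). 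Hence $\|wu\|_{L_z^1L^2_{(0,t)}L^q_M}\le C$, which closes the estimate. The entire difficulty is thus concentrated in arranging Corollary \ref{COR1-1} together with the integrated dissipation of Proposition \ref{pro4.2} so that these exponents land exactly on the available $L^2_{(0,t)}$ bounds.
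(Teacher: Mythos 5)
Your proposal is correct in substance and shares the paper's skeleton (slice-wise maximal regularity in $(x,y,t)$, then integration in $z$, with the pressure handled by Riesz transforms and the baroclinic term producing the $\int_0^t\|\nabla_HT\|_q^2\,ds$ on the right), but it treats the two hard terms $(v\cdot\nabla_H)v$ and $wu$ by a genuinely different mechanism. The paper interpolates $\|v\|_{\infty,M}$, $\|\nabla_Hv\|_{q,M}$ and $\|w\|_{2q,M}$ against $\|\nabla_H^2v\|_{q,M}$ itself (via Gagliardo--Nirenberg and $\|w\|_{2q,M}^2\lesssim\|\nabla_Hv\|_2\|\nabla_H^2v\|_{L^q_ML^1_z}$), ending with sublinear powers of the unknown that are absorbed into the left-hand side by Young's inequality with small $\delta$; you instead estimate both terms directly from Corollary \ref{COR1-1} and the time-integrated dissipation of Proposition \ref{pro4.2}, with the exponent matching $(2-\tfrac2q)\tfrac{q}{q-1}=2$ and $\tfrac2q\cdot q=2$ doing the work, so no absorption is needed. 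This is a clean alternative; it costs you (a) a $2q$-analogue of Lemma \ref{pro3.2} to get $\sup_{z,t}\|v\|_{2q,M}\le C$ (straightforward from $\|v\|_r\le C\sqrt r$ and $\sup_t\|u\|_2\le C$, but not stated in the paper), and (b) care with the order of the anisotropic norms: as written, your displays for $(v\cdot\nabla_H)v$ and $wu$ bound $\|\cdot\|_{L^2_{(0,t)}L^1_zL^q_M}$, whereas the target after integrating the maximal-regularity estimate in $z$ is the larger norm $\|\cdot\|_{L^1_zL^2_{(0,t)}L^q_M}$, and Minkowski goes the wrong way for that implication. This is harmless here --- since the $z$-dependence enters only through $\bigl(\int_{-h}^h\|\nabla_Hv\|_{2q,M}^2\,dz\bigr)^{1/2}$ and $Q=\bigl(\int_{-h}^h\|u\|_{2q,M}^2\,dz\bigr)^{1/2}$, applying Cauchy--Schwarz in $z$ \emph{before} integrating in time gives
\[
\int_{-h}^h\Bigl(\int_0^t\|wu\|_{q,M}^2\,ds\Bigr)^{1/2}dz\le(2h)^{1/2}\Bigl(\int_0^tP^2Q^2\,ds\Bigr)^{1/2},
\]
i.e.\ exactly the same integral $\int_0^tP^2Q^2\,ds$ you already control --- but the rearrangement should be made explicit.
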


\begin{proof}
Applying the parabolic estimate to equation (\ref{eq8}) yields
\begin{eqnarray}
&\|\nabla_H^2v(.,z,.)\|_{L^2\left(0,t; L^q(M)\right)}
\leq C\Big(\|(-(v\cdot\nabla_H)v-w u-\nabla_H p_s
+\int_{-h}^z\nabla_HTd\xi\nonumber \\&-f_0\vec{k}\times v)(.,z,.)\|_{L^2\left(0,t; L^q(M)\right)}+\|v_0(.,z)\|_{B_{q,2}^1(M)}\Big), \quad \forall z\in (-h, h), \nonumber
\end{eqnarray}
and thus by Proposition \ref{pro3.1}
\begin{eqnarray}
\|\nabla_H^2 v\|_{L_z^1L^2_{(0,t)}L_M^q}^2
 &\leq&  C\left(1+\big\||v|\nabla_Hv\big\|_{L_z^1L_{(0,t)}^2L_M^q}+\|w u\|_{L_z^1L_{(0,t)}^2L_M^q}\right.\nonumber\\
 &&+\|\nabla_H p_s\|_{L_{(0,t)}^2L_M^q}+\left.\|\nabla_H T\|_{L_z^1L_{(0,t)}^2L_M^q}\right)^2.\label{eq33}
\end{eqnarray}

The terms on the right-hand side of (\ref{eq33}) are estimated as follows.
First, as $p_s$ satisfies (\ref{eq15}), by the elliptic estimates, Proposition \ref{pro3.1}, and Minkowski inequality, one has
\begin{align}
\|\nabla_H p_s\|_{L^2_{(0,t)}L_M^q}
&\leq C\big\||v|\nabla_H v\big\|_{L_z^1L_{(0,t)}^2L_M^q}+\left\|v\right\|_{L_z^1L_{(0,t)}^2L_M^q}+\left\|\nabla_H T\right\|_{L_z^1L_{(0,t)}^2L_M^q}\nonumber\\
&\leq C\| \nabla_H T \|_{L^2(0,t;L^q(\Omega))}+C\big\||v|\nabla_Hv\big\|_{L_z^1L_{(0,t)}^2L_M^q}+C\nonumber\\
& = C\left(\int_0^t \|\nabla_H T\|_q^2 ds\right)^{\frac{1}{2}}+C\big\||v|\nabla_Hv\big\|_{L_z^1L_{(0,t)}^2L_M^q}+C. \label{a4}
\end{align}

Next, it follows from the H\"{o}lder, Gagliardo-Nirenberg, and Young inequalities that
\begin{eqnarray*}
\big\||v|\nabla_H v\big\|_{L_z^1L_{(0,t)}^2L_M^q}
&=& \int_{-h}^h\left(\int_0^t\left(\int_M|v|| \nabla_Hv|^qdxdy\right)^\frac{2}{q}ds\right)^{\frac{1}{2}}dz \nonumber\\
&\leq& \,\int_{-h}^h\left(\int_0^t\|v \|_{\infty,M}^2\|\nabla_Hv \|_{q,M}^2ds\right)^{\frac{1}{2}}dz\nonumber\\
&\leq& C\int_{-h}^h\left[\int_0^t\left(\|v \|_{4,M}^{\frac{8(q-1)}{5q-4}}\|\nabla_H^2v \|_{q,M}^{\frac{2q}{5q-4}}
 +\|v \|_{4,M}^2\right)\right.\nonumber\\
&& \times\left.\left(\|v \|_{4,M}^{\frac{4q}{5q-4}}\|\nabla_H^2v \|_{q,M}^{\frac{6q-8}{5q-4}}
 +\|v \|_{4,M}^2\right)ds\right]^{\frac{1}{2}}dz
 \end{eqnarray*}
from which, noticing that Propositions \ref{pro3.1}--\ref{pro4.2} and Lemma \ref{pro3.2} imply
$$\sup_{0\leq s\leq t}\sup_{-h\leq z\leq h}\|v(.,z,s)\|_{4,M}\leq C,$$ one obtains by the Young inequality that
 \begin{align}
 \big\||v|\nabla_H v\big\|_{L_z^1L_{(0,t)}^2L_M^q}
 \leq&C\int_{-h}^h\left(\int_0^t\|\nabla_H^2v\|_{q,M}^{\frac{8q-8}{5q-4}}ds\right)^\frac12dz+C\nonumber\\
\leq& \delta\int_{-h}^h\left(\int_0^t\|\nabla_H^2v(.,z,s)\|_{q,M}^{2}ds\right)^{\frac{1}{2}}dz +C_{\delta}\nonumber\\
=&\delta\|\nabla_H^2 v\|_{L_z^1L_{(0,t)}^2L_M^q}+C_{\delta},\quad\forall\delta>0. \label{a6}
\end{align}

Finally, we estimate $\|w u\|_{L_z^1L_{(0,t)}^2L_M^q}$. Noticing that from (\ref{eq11}) $\int_M w dxdy=0$, it follows from the Gagliardo-Nirenberg and Poincar$\acute{e}$ inequalities that
\begin{eqnarray*}
\|w\|_{L^{2q}(M)}^2&\leq& C\|w\|_{L^2(M)}\|\nabla_Hw\|_{L^q(M)}\nonumber\\
&\leq& C\left\|\int_{-h}^h|\nabla_H v|dz\right\|_{L^2(M)}\left\|\int_{-h}^h|\nabla_H^2v|dz\right\|_{L^q(M)}\nonumber\\
&\leq& C\|\nabla_Hv\|_2\|\nabla_H^2v\|_{L^q_ML^1_z}\label{eqH2Lq2}
\end{eqnarray*}
and thus, by Corollary \ref{COR1-1}, it holds that
$\|w\|_{L^{2q}(M)}^2\leq C \|\nabla_H^2v\|_{L^q_ML^1_z}.$
By virtue of this estimate, it follows from the H\"{o}lder and Ladyzhenskaya inequalities that
\begin{eqnarray*}
\|w u\|_{L_z^1L_{(0,t)}^2L_M^q}
&\leq&\int_{-h}^h\left(\int_0^t\|w\|_{2q,M}^{2}\big\||u|^{\frac{q}{2}}\big\|_{4,M}^{\frac{4}{q}}ds\right)^{\frac{1}{2}}dz\nonumber\\
&\leq& C\int_{-h}^h\left(\int_0^t\|\nabla_H^2v\|_{L^q_ML^1_z}\big\||u|^{\frac{q}{2}}\big\|_{2,M}^{\frac{2}{q}} \big\||u|^{\frac{q}{2}}\big\|_{H^1(M)}^{\frac{2}{q}} ds\right)^{\frac{1}{2}}dz\nonumber\\
&\leq& C\left(\int_{-h}^h\int_0^t\|\nabla_H^2v\|_{L^q_ML^1_z}\big\||u|^{\frac{q}{2}}\big\|_{2,M}^{\frac{2}{q}} \big\||u|^{\frac{q}{2}}\big\|_{H^1(M)}^{\frac{2}{q}} dsdz\right)^{\frac{1}{2}}\nonumber\\
&\leq& C\left[\int_0^t\|\nabla_H^2v\|_{L^q_ML^1_z}\int_{-h}^h\left(\|u\|_{q,M}^2+\|u\|_{q,M}\big\|\nabla_H|u|^{\frac{q}{2}}\big\|_{2,M}^{\frac{2}{q}}\right)dz ds\right]^{\frac{1}{2}}\nonumber\\
&\leq& C\left[\int_0^t\|\nabla_H^2v\|_{L^q_ML^1_z} \left(\|u\|_{q}^2+\|u\|_{q}\big\|\nabla_H|u|^{\frac{q}{2}}\big\|_{2}^{\frac{2}{q}}\right) ds\right]^{\frac{1}{2}}\nonumber\\
&\leq& t^{\frac{q-2}{4q}}C\left(\sup_{0\leq s\leq t}\|u\|_q\right)\left(\int_0^t \big\|\nabla_H |u|^{\frac{q}{2}}\big\|_2^{2}ds\right)^{\frac{1}{2q}}\|\nabla_H^2v\|_{L_{(0,t)}^2L_M^qL_z^1}^{\frac{1}{2}}\nonumber\\
&&  +C t^{\frac{1}{4}}\left(\sup_{0\leq s\leq t}\|u\|_q^2\right)\|\nabla_H^2v\|_{L_{(0,t)}^2L_M^qL_z^1}^{\frac{1}{2}}\label{eqH2Lq3}
\end{eqnarray*}
from which, by Proposition \ref{pro4.2}, and since $t\in(0,\mathcal T)$, one gets by means of the Minkowski and Young inequalities
\begin{align}
\|w u\|_{L_z^1L_{(0,t)}^2L_M^q}&\leq C\|\nabla_H^2v\|_{L_{(0,t)}^2L_M^qL_z^1}^{\frac{1}{2}} \leq C\|\nabla_H^2v\|_{L_z^1L_{(0,t)}^2L_M^q}^{\frac{1}{2}} \nonumber\\
&\leq\delta\|\nabla_H^2 v\|_{L_z^1L_{(0,t)}^2L_M^q}+C_{\delta},\quad\forall\delta>0.  \label{a8}
\end{align}

Substituting (\ref{a4}), (\ref{a6}), (\ref{a8}) into (\ref{eq33}), and choosing $\delta$ sufficiently small lead to the conclusion.
\end{proof}

\begin{proposition}
  \label{pro4.3}
Suppose the assumptions of Proposition \ref{prop2.1} hold and $\underline{\mathcal T}<\mathcal T$ is given. Then, the following estimate holds
\begin{eqnarray*}
&&\|\nabla_H (\eta,\theta)\|_{L^2\left(\underline{\mathcal T}, \mathcal{T}; L_z^2L^q_M\right)}\leq C
\end{eqnarray*}
for a positive constant $C$ depending only on $q$, $h$, $M_0$, $\underline{\mathcal T}$, and $\mathcal{T}$, where $\eta$ and $\theta$ are given by (\ref{eq16}).
\end{proposition}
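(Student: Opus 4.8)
The plan is to treat equations (\ref{eq18}) and (\ref{eq19}) as two-dimensional horizontal heat equations on the torus $M$, with the vertical variable $z$ entering only as a parameter, and to apply maximal regularity in the mixed norm $L^2_{(0,t)}L^q_M$ for each fixed $z$ before integrating in $z$. Since the desired bound is only over $(\underline{\mathcal T},\mathcal T)$ with $\underline{\mathcal T}>0$, I would first multiply $\eta$ and $\theta$ by a smooth temporal cutoff $\chi$ with $\chi\equiv 0$ on $[0,\underline{\mathcal T}/2]$ and $\chi\equiv 1$ on $[\underline{\mathcal T},\mathcal T]$. Then $\chi\eta$ and $\chi\theta$ solve the same equations with zero initial data and with the extra zeroth-order forcing $\chi'\eta,\chi'\theta$; working with zero initial data is exactly what lets us avoid imposing any Besov regularity on the traces, and is the reason the relaxed data hypotheses suffice.

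The key algebraic step is to rewrite every forcing term as a horizontal divergence $\nabla_H\cdot G$ plus a genuinely lower-order remainder $g$, where $G$ carries no derivative of $v$ beyond first order and $g\in L^2_{(0,t)}L^2(\Omega)$. For $\theta$ this is automatic, since $\nabla_H^\perp\cdot A=\nabla_H\cdot(A_2,-A_1)$, so the entire right-hand side is $\nabla_H\cdot G_\theta$ with $G_\theta$ built from $(v\cdot\nabla_H)v$, $wu$ and $f_0\vec k\times v$. For $\eta$ the inertial, Coriolis, and ($z$-averaged) pressure-type terms in $f$ are handled identically; the crucial observation is that the temperature transport term is itself a horizontal divergence, $\int_{-h}^z\nabla_H\cdot(vT)\,d\xi=\nabla_H\cdot\int_{-h}^z vT\,d\xi$, so it contributes the derivative-free field $\int_{-h}^z vT\,d\xi$ to $G_\eta$ and never forces us to control $\nabla_H T$, which is unavailable at this stage. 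The only terms that are not horizontal divergences are $\partial_z T$ and $wT$ (together with the $z$-averaged $wT$ hidden in $f$), which I would put into the remainder $g$.

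For the divergence part I would use standard maximal regularity: if $\partial_t\psi-\Delta_H\psi=\nabla_H\cdot G$ on $M$ with $\psi(0)=0$, then $\|\nabla_H\psi\|_{L^2_{(0,t)}L^q_M}\le C\|G\|_{L^2_{(0,t)}L^q_M}$, which follows by solving $\partial_t W-\Delta_H W=G$ and setting $\psi=\nabla_H\cdot W$, so that $\nabla_H\psi=\nabla_H\nabla_H\cdot W$ is controlled by $\nabla_H^2W$. For the remainder $g\in L^2_{(0,t)}L^2_M$, the $L^2$ maximal regularity yields $\nabla_H^2\psi\in L^2_{(0,t)}L^2_M$, hence $\nabla_H\psi\in L^2_{(0,t)}H^1_M\hookrightarrow L^2_{(0,t)}L^q_M$ by the two-dimensional Sobolev embedding; here it is decisive that we only want the \emph{first} horizontal derivative of $\eta$, since this is what makes a merely $L^2$ forcing such as $\partial_z T$ admissible. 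Taking the $L^2_z$ norm of these pointwise-in-$z$ estimates and using Fubini then gives $\|\nabla_H(\eta,\theta)\|_{L^2((\underline{\mathcal T},\mathcal T);L^2_zL^q_M)}\le C\big(\|G\|_{L^2((\underline{\mathcal T}/2,\mathcal T);L^2_zL^q_M)}+\|g\|_{L^2((\underline{\mathcal T}/2,\mathcal T);L^2(\Omega))}\big)$, together with the contribution $\chi'(\eta,\theta)\in L^2_{(0,t)}L^2(\Omega)$ of the cutoff.

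It then remains to bound $G$ and $g$ by the quantities already controlled in Propositions \ref{pro3.1}--\ref{pro4.2} and Corollary \ref{COR1-1}. The cross term $wu$ is the delicate one: I would estimate $\|wu\|_{q,M}\le\|w\|_{2q,M}\|u\|_{2q,M}$, bound $\sup_z\|w\|_{2q,M}^2\le\big\|\int_{-h}^h|\nabla_H v|\,dz\big\|_{2q,M}^2\le C(\|\nabla_H(\eta,\theta)\|_2^{2-2/q}+1)$ and $\int_{-h}^h\|u\|_{2q,M}^2\,dz\le C(1+\|\nabla_H|u|^{q/2}\|_2^{2/q})$ directly from Corollary \ref{COR1-1}, and then integrate in time by H\"older, using that $\|\nabla_H(\eta,\theta)\|_2^2$ and $\|\nabla_H|u|^{q/2}\|_2^2$ lie in $L^1_t$ by Proposition \ref{pro4.2}; deliberately invoking Corollary \ref{COR1-1} here rather than Proposition \ref{PRO-PARA-EST1} keeps $\nabla_H T$ out of the estimate. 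The quadratic terms $|v|\,|\nabla_H v|$ (including those in $f$) are controlled by $\sup_z\|v\|_{2q,M}\le C$ (proved as in Lemma \ref{pro3.2}, using $u\in L^\infty_t L^q$) and the $L^2_zL^{2q}_M$ bound on $\nabla_H v$ from Corollary \ref{COR1-1}, while $\int_{-h}^z vT\,d\xi$ and $v$ are immediate from $T\in L^\infty$ and Proposition \ref{pro3.1}; for $g$, one has $\partial_z T\in L^2_{(0,t)}L^2(\Omega)$ from the basic energy estimate and $\|wT\|_2\le\|T\|_\infty\|w\|_2\le C\|\nabla_H v\|_2\in L^2_t$. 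I expect the main obstacle to be exactly this bookkeeping: organizing the nonlinear terms so that each closes using only the already-established estimates — in particular avoiding any appeal to $\nabla_H T$ and to the merely $L^1_z$-integrable bound on $\nabla_H^2 v$ — while correctly tracking the anisotropic $L^2_zL^q_M$ mixed norms throughout.
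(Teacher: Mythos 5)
Your proposal is correct and follows essentially the same route as the paper: the same splitting of the forcing into a horizontal-divergence part (containing $(v\cdot\nabla_H)v$, $w\partial_zv$, $f_0\vec k\times v$, and $\int_{-h}^zvT\,d\xi$) plus an $L^2$ remainder ($\partial_zT$, $wT$ and their $z$-averages), the same slice-in-$z$ maximal regularity in $L^2_tL^q_M$ for the divergence part and $L^2_tL^2_M$ plus two-dimensional Sobolev embedding for the remainder, and the same bounds on $G$ via Corollary \ref{COR1-1} and the time-integrability from Proposition \ref{pro4.2}. The only cosmetic difference is that you localize in time with a cutoff (absorbing $\chi'(\eta,\theta)$ into the remainder), whereas the paper restarts the parabolic problems at a well-chosen time $\mathcal T_\sharp\in(\underline{\mathcal T}/2,\underline{\mathcal T})$ at which $\|(\eta,\nabla_H\eta)\|_2$ is finite; both devices serve the same purpose.
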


\begin{proof}
By Proposition \ref{pro4.2}, one can choose a positive time $\mathcal T_\sharp\in\left(0.5\underline{\mathcal T},\underline{\mathcal T}\right)$, such that
\begin{equation}\label{ADD24.5.5-3}
\|\eta\|_2(\mathcal T_\sharp)+\|\nabla_H\eta\|_2(\mathcal T_\sharp)\leq C.
\end{equation}

Rewrite (\ref{eq19}) as
\begin{align*}
 \partial_t\eta -\Delta_H \eta=-\nabla_H\cdot\left(G-\frac{1}{2h}\int_{-h}^h G d\xi\right)+
 \partial_z T -w T+\frac{1}{2h}\int_{-h}^h w Td\xi,
\end{align*}
where
\begin{equation}\label{EQG}
G=(v\cdot \nabla_H)v+w \partial_z v+f_0\vec{k}\times v-\int_{-h}^z vTd\xi.
\end{equation}
Decompose $\eta$ as $$\eta=\eta_1+\eta_2,$$ where $\eta_1$ and $\eta_2$, respectively, satisfy
\begin{eqnarray}
\label{A}
 \left\{
  \begin{array}{l}
     \partial_t\eta_1-\Delta_H\eta_1
    = -\nabla_H\cdot\left(G-\frac{1}{2h}\int_{-h}^h G d\xi\right),\\
\left.\eta_1\right|_{t=\mathcal T_\sharp}=0,
  \end{array}
  \right.
\end{eqnarray}
and
\begin{eqnarray}
\label{B}
 \left\{
  \begin{array}{l}
 \partial_t\eta_2-\Delta_H\eta_2
    = \partial_z T -w T+\frac{1}{2h}\int_{-h}^h w Td\xi, \\
 \left.\eta_2\right|_{t=\mathcal{T}_\sharp}=\left.\eta\right|_{t=\mathcal T_\sharp}.
  \end{array}
  \right.
\end{eqnarray}

Applying the parabolic estimate to (\ref{B}) yields
\begin{align*}
&\left\|\eta_2(., z, .)\right\|_{L^2(\mathcal T_\sharp,\mathcal{T};H^2(M))}\\
\leq& C\left\|\partial_z T -w T+\frac{1}{2h}\int_{-h}^h w Td\xi\right\|_{L^2\left(\mathcal T_\sharp,\mathcal{T};L^2(M)\right)}+C\left\|\left.(\eta_2)\right|_{t=\mathcal T_\sharp}\right\|_{H^1(M)}
\end{align*}
and thus, by the Sobolev embedding, (\ref{ADD24.5.5-3}), and Propositions \ref{pro3.1}--\ref{pro4.2}, it follows
\begin{align}
\left\|\nabla_H\eta_2(., z, .) \right\|_{L_z^2L^2_{(\mathcal T_\sharp,\mathcal{T})}L^q_M}
\leq& C\left\|\eta_2(., z, .) \right\|_{L_z^2L^2_{(\mathcal T_\sharp,\mathcal{T})}H^2_M}
\leq C.\label{eq3.81}
\end{align}
Applying the parabolic estimate to (\ref{A}) yields
\begin{align*}
&\left\|\nabla\eta_1(., z, .)\right\|_{L^2(\mathcal T_\sharp,\mathcal{T};L^q_M)}
\leq C \left\|G\right\|_{L^2(\mathcal T_\sharp,\mathcal{T};L^q_M)}+C\left\|\int_{-h}^hGdz\right\|_{L^2(\mathcal T_\sharp,\mathcal{T};L^q_M)}
\end{align*}
and thus
\begin{align}
\left\|\nabla\eta_1\right\|_{L^2(\mathcal T_\sharp,\mathcal{T};L_z^2L^q_M)}
\leq C\left\|G\right\|_{L^2(\mathcal T_\sharp,\mathcal{T};L^2_zL^q_M)}.\label{eq3.82}
\end{align}

Noticing that $|v|\leq \frac{1}{2h}\int_{-h}^h |v|dz+\int_{-h}^h |u|dz$, one deduces
by Proposition \ref{pro3.1}, Corollary \ref{COR1-1}, and the Sobolev and H\"{o}lder inequalities that
\begin{eqnarray*}
&&\int_{-h}^h\big\||v||\nabla_Hv|+|w|| \partial_z v|\big\|_{q,M}^2dz\\
&\leq& C\int_{-h}^h\left\| \left(\int_{-h}^h(|v|+|u|)dz\right)|\nabla_H v|+\left(\int_{-h}^h|\nabla_H v|dz\right)|u|\right\|_{q,M}^2dz\\
&\leq &C\left(\left\|\int_{-h}^h|v|dz\right\|_{L^{2q}(M)}^2+\left\|\int_{-h}^h|u|dz\right\|_{L^{2q}(M)}^2\right)\int_{-h}^h\left\|\nabla_H v\right\|_{2q,M}^2dz\\
&&+C\left\|\int_{-h}^h|\nabla_H v|dz\right\|_{L^{2q}(M)}^2\int_{-h}^h\left\|u\right\|_{2q,M}^2dz\\
&\leq & C\Big(\big\|\nabla_H|u|^{\frac{q}{2}}\big\|_2^{\frac{2}{q}}+1\Big)\left(\|\nabla_H(\eta, \theta)\|_2^{2-\frac{2}{q}}+1\right),
\end{eqnarray*}
from which,by the Young inequality and Proposition \ref{pro4.2}, one gets
\begin{eqnarray}
\big\||v||\nabla_Hv|+|w|| \partial_z v|\big\|_{L^2\left(\mathcal T_\sharp,\mathcal{T}; L_z^2L^q_M\right)}^2&\leq& C\int_{\mathcal T_\sharp}^{\mathcal{T}}\left(\|\nabla_H(\eta, \theta)\|_2^2+\|\nabla_H|u|^{\frac{q}{2}}\|_2^2+1\right)dt\nonumber\\
&\leq& C.
\label{eq3.83}
\end{eqnarray}
By Propositions \ref{pro3.1}--\ref{pro4.2}, it follows from the H\"{o}lder inequality that
\begin{align}
&\big\| f_0\vec{k}\times v \big\|_{L^2(\mathcal T_\sharp,\mathcal{T};L_z^2L^q_M)}^2\leq C \label{eq3.85}
\end{align}
and
\begin{align}
&\left\|\int_{-h}^{z}vTd\xi\right\|_{L^2(\mathcal T_\sharp,\mathcal{T};L^2_zL^q_M)}^2\leq C\|T\|^2_{L^\infty\left(\Omega\times\left(\mathcal T_\sharp, \mathcal{T}\right)\right)}\|v\|_{L^2(\mathcal T_\sharp,\mathcal{T};L^q(\Omega))}^2 \leq C .\label{eq3.86}
\end{align}

Recalling (\ref{EQG}), one deduces by (\ref{eq3.83})--(\ref{eq3.86}) that
$\left\|G\right\|_{L^2(\mathcal T_\sharp,\mathcal{T};L_z^2L^q_M)}\leq C.$ With the aid of this, it follows from (\ref{eq3.82}) that
$\left\|\nabla\eta_1(., z, .)\right\|_{L^2(\mathcal T_\sharp,\mathcal{T};L^2_zL^q_M)} \leq C,$
which combined with (\ref{eq3.81}) leads to the conclusion.
\end{proof}

\subsection{Short in time a priori estimate on $\nabla_HT$}

\begin{proposition}
  \label{pro3.6}
Suppose the assumptions of Proposition \ref{prop2.1} hold. Then, there exists a positive time $\mathcal T_0$ depending only on $h$ and $M_0$, such that
$$
\sup_{0\leq t \leq \mathcal T_0} \|\nabla_HT\|_q^q(t) + \int_0^{\mathcal T_0}\int_{\Omega}|\nabla_HT|^{q-2}|\partial_z\nabla_HT|^2 dxdydzdt\leq C
$$
for a positive constant $C$ depending only on $h$ and $M_0$.
\end{proposition}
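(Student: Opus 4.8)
The plan is to derive an energy inequality for $\|\nabla_H T\|_q^q$ by differentiating the temperature equation \eqref{eq10} in the horizontal directions. First I would apply $\nabla_H$ to \eqref{eq10}, multiply by $|\nabla_H T|^{q-2}\nabla_H T$, and integrate over $\Omega$. The diffusion term $-\partial_z^2 T$ produces, after integration by parts in $z$, a good coercive term of the form $\int_\Omega |\nabla_H T|^{q-2}|\partial_z \nabla_H T|^2\,dxdydz$ (together with a nonnegative term involving $\big|\partial_z|\nabla_H T|\big|^2$), which is exactly the dissipation appearing in the statement. The transport terms $v\cdot\nabla_H T$ and $w\,\partial_z T$, once differentiated, generate several commutator-type contributions: the main dangerous ones are
\begin{equation*}
\int_\Omega |\nabla_H v|\,|\nabla_H T|^q\,dxdydz \qquad\text{and}\qquad \int_\Omega |\nabla_H w|\,|\partial_z \nabla_H T|\,|\nabla_H T|^{q-1}\,dxdydz,
\end{equation*}
precisely the two terms flagged in the introduction as the crux of the argument.

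The second term is the one I expect to be the main obstacle, and for the \emph{short-time} estimate I would handle it by reducing it to the anisotropic parabolic bound for $\nabla_H^2 v$ from Proposition \ref{PRO-PARA-EST1}. Since $w=-\int_{-h}^z \nabla_H\cdot v\,d\xi$, we have $\nabla_H w = -\int_{-h}^z \nabla_H(\nabla_H\cdot v)\,d\xi$, so $\|\nabla_H w\|$ is controlled by $\|\nabla_H^2 v\|_{L^q_M L^1_z}$. After applying the coercive term of size $\tfrac12$ to absorb a factor $|\nabla_H T|^{q-2}|\partial_z\nabla_H T|^2$ via Young's inequality, the remaining piece is bounded using Hölder (splitting the exponents as $q=2+(q-2)$) together with the $L^\infty_M$-type and $L^{2q}_M$-type bounds on $w$ and its horizontal derivative established through Corollary \ref{COR1-1}. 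The first dangerous term, $\int_\Omega |\nabla_H v|\,|\nabla_H T|^q$, is likewise reduced to $\|\nabla_H^2 v\|_{L^q_M L^1_z}$ by estimating $|\nabla_H v|$ in $L^{2q}_M$ through the elliptic/Gagliardo--Nirenberg bounds of Corollary \ref{COR1-1} and pairing it against $\||\nabla_H T|^{q/2}\|$ factors; the smallness needed to close the loop is extracted from the short time interval $(0,\mathcal T_0)$ rather than from any global bound.

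Assembling these estimates, I would arrive at a differential inequality of the schematic form
\begin{equation*}
\frac{d}{dt}\|\nabla_H T\|_q^q + c\int_\Omega |\nabla_H T|^{q-2}|\partial_z\nabla_H T|^2\,dxdydz \leq C\, P\big(\|\nabla_H^2 v\|_{L^q_M L^1_z}\big)\big(\|\nabla_H T\|_q^q+1\big),
\end{equation*}
where the right-hand coefficient is integrable in time over finite intervals by Proposition \ref{PRO-PARA-EST1} once one notes that the same proposition bounds $\int_0^t\|\nabla_H^2 v\|^2$ in terms of $\int_0^t\|\nabla_H T\|_q^2$. The key point is that over a sufficiently short interval $(0,\mathcal T_0)$ this feedback of $\nabla_H T$ into the velocity estimate stays subcritical, so a continuity/bootstrap argument (choosing $\mathcal T_0$ depending only on $h$ and $M_0$) closes the estimate before $\|\nabla_H T\|_q$ can blow up. I would then integrate in time and invoke a standard Grönwall argument on the short interval to conclude both the supremum bound on $\|\nabla_H T\|_q^q$ and the integrated bound on the dissipation term, with $\mathcal T_0$ and the final constant $C$ depending only on $h$ and $M_0$ as claimed.
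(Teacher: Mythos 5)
Your overall architecture matches the paper's: the $L^q$ energy identity with only the vertical dissipation, the reduction of both dangerous terms to the anisotropic maximal-regularity quantity $\|\nabla_H^2v\|_{L^1_zL^q_M}^2$ from Proposition \ref{PRO-PARA-EST1}, and the closing of the resulting feedback loop (via $\int_0^t\|\nabla_H^2v\|_{L_z^1L^q_M}^2ds\leq C+C\int_0^t\|\nabla_HT\|_q^2ds$) by a nonlinear Gr\"onwall/ODE argument on a short interval $(0,\mathcal T_0)$; the paper makes your ``continuity/bootstrap'' precise by showing $F(t)=\int_0^t\|\nabla_HT\|_q^2ds$ obeys $F'\leq C_*e^{C_*F}$, which stays finite up to $\mathcal T_0=\frac{1}{2C_*}$. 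Your treatment of the $\nabla_Hw$ term is also essentially the paper's estimate (\ref{ESTJ1}).

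There is, however, a concrete gap in your treatment of $\int_{\Omega}|\nabla_Hv||\nabla_HT|^{q}\,dxdydz$. You propose to put $\nabla_Hv$ in $L^{2q}_M$ (via Corollary \ref{COR1-1}) and pair it against $\||\nabla_HT|^{q/2}\|$ factors, with the shortness of the time interval supplying the needed smallness. This cannot close: by H\"older, an $L^{2q}_M$ bound on $\nabla_Hv$ forces you to measure $\nabla_HT$ in $L^r(M)$ for some $r>q$ on each horizontal slice, and since the temperature equation has \emph{no horizontal} dissipation there is no mechanism to upgrade the horizontal integrability of $\nabla_HT$ beyond $L^q$; shortness of the time interval is irrelevant to this exponent mismatch (and in the paper's argument the short time is used only at the very end, to solve the ODE for $F$, not to make any individual term small). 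The step that actually makes this term tractable — and which you never invoke — is the oddness of $T$ in $z$, i.e.\ $\nabla_HT|_{z=0}=0$, which gives the pointwise bound $|\nabla_HT(x,y,z)|^{q}\leq q\int_{-h}^h|\nabla_HT|^{q-1}|\partial_z\nabla_HT|\,d\xi$. This trades the full power $|\nabla_HT|^q$ for a quantity involving the \emph{vertical} dissipation, after which one pairs with $\big\|\int_{-h}^h|\nabla_Hv|\,dz\big\|_{L^\infty(M)}\leq C\|\nabla_H^2v\|_{L^1_zL^q_M}$ (the two-dimensional embedding $W^{1,q}(M)\hookrightarrow L^\infty(M)$, $q>2$), absorbs half into the coercive term by Young, and is left with $C\|\nabla_H^2v\|_{L^1_zL^q_M}^2\|\nabla_HT\|_q^q$ — exactly the Gr\"onwall coefficient you want. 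The same vanishing at $z=0$ (together with $\|T\|_\infty\leq\|T_0\|_\infty$) is also what legitimizes the integration by parts in $z$ that converts the raw term $\int|\nabla_Hw||\partial_zT||\nabla_HT|^{q-1}$ into the form $\int|\nabla_Hw||\nabla_HT|^{q-2}|\partial_z\nabla_HT|$ that you quoted from the introduction; you should make that step explicit as well. With these corrections your plan coincides with the paper's proof.
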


\begin{proof}
Multiplying (\ref{eq10}) by $-\,\text{div}_H (|\nabla_HT|^{q-2}\nabla_HT)$ and integrating over $\Omega$, it follows from integration by parts that
\begin{eqnarray}
 &&\frac{1}{q}\frac{d}{dt}\|\nabla_HT\|_q^q+\int_{\Omega}|\nabla_HT|^{q-2}(|\partial_z\nabla_HT|^2+(q-2)|\partial_z|\nabla_HT||^2)dxdydz \nonumber\\
 &=&-\int_{\Omega}|\nabla_HT|^{q-2}(\nabla_H T\cdot \nabla_H v+\partial_zT\nabla_Hw)\cdot \nabla_HTdxdydz\nonumber \\
 &=&-\int_{\Omega}|\nabla_HT|^{q-2}\nabla_H T\cdot \nabla_H v\cdot\nabla_HTdxdydz+\int_{\Omega}T|\nabla_HT|^{q-2}\nabla_H\partial_zw\cdot\nabla_HTdxdydz\nonumber \\
 &&+\int_{\Omega}T\nabla_Hw\cdot
 \partial_z(|\nabla_HT|^{q-2}\nabla_HT)dxdydz \nonumber \\
 &\leq&C\int_M\left(\int_{-h}^h|\nabla_H^2 v|dz\right)\left(\int_{-h}^{h}|\nabla_HT|^{q-2}|\partial_z\nabla_HT|dz\right)dxdy \nonumber\\
 &&+\int_{\Omega}(|\nabla_Hv||\nabla_HT|^{q}+C |\nabla_H^2 v||\nabla_HT|^{q-1}) dxdydz. \label{ADD24.5.5-7}
\end{eqnarray}

Noticing that $T|_{z=0}=0$, one has
\begin{eqnarray}
&&\int_{\Omega}|\nabla_H^2 v||\nabla_HT|^{q-1}dxdydz\nonumber\\
&\leq&(q-1)\int_M\left(\int_{-h}^h|\nabla_HT|^{q-2}|\partial_z\nabla_HT|dz\right)\left(\int_{-h}^h|\nabla_H^2v|dz\right)dxdy.
\label{ADD24.5.5-8}
\end{eqnarray}
By the H\"{o}lder, Young, and Minkowski inequalities, one has
\begin{eqnarray}
&&\int_M\left(\int_{-h}^h|\nabla_H^2 v|dz\right)\left(\int_{-h}^{h}|\nabla_HT|^{q-2}|\partial_z\nabla_HT|dz\right)dxdy \nonumber \\
&\leq& \int_M\left(\int_{-h}^h|\nabla_H^2v|dz\right)\left(\int_{-h}^{h}|\nabla_HT|^{q-2}dz\right)^{\frac{1}{2}} \left(\int_{-h}^{h}|\nabla_HT|^{q-2}|\partial_z\nabla_H T|^2dz\right)^{\frac{1}{2}}dxdy \nonumber \\
&\leq& \left[\int_M\left(\int_{-h}^h|\nabla_H^2v|dz\right)^qdxdy\right]^{\frac{1}{q}}\|\nabla_H T\|_q^{\frac{q-2}{2}}\big\||\nabla_HT|^{\frac{q}{2}-1}\partial_z\nabla_HT\big\|_2 \nonumber \\
&\leq& \frac{1}{4}\||\nabla_HT|^{\frac{q}{2}-1}|\partial_z\nabla_HT|\|_2^2+C\|\nabla_H^2v\|_{L^q_M L^1_z}^2\|\nabla_H T\|_q^{q-2}\nonumber\\
&\leq& \frac{1}{4}\||\nabla_HT|^{\frac{q}{2}-1}|\partial_z\nabla_HT|\|_2^2+C\|\nabla_H^2v\|_{L^1_z L^q_M }^2\|\nabla_H T\|_q^{q-2}.\label{ESTJ1}
\end{eqnarray}
Substituting (\ref{ADD24.5.5-8}) and (\ref{ESTJ1}) into (\ref{ADD24.5.5-7}) yields
\begin{eqnarray}
 &&\frac{1}{q}\frac{d}{dt}\|\nabla_HT\|_q^q+\frac34\int_{\Omega}|\nabla_HT|^{q-2} |\partial_z\nabla_HT|^2 dxdydz \nonumber\\
 &\leq& \int_{\Omega}|\nabla_Hv||\nabla_HT|^{q}dxdydz+C\|\nabla_H^2v\|_{L^1_z L^q_M }^2\|\nabla_H T\|_q^{q-2}. \label{ADD24.5.5-9}
\end{eqnarray}

Due to $T|_{z=0}=0$, it follows from the H\"older and Young inequalities that
\begin{eqnarray}
&&\int_{\Omega}|\nabla_Hv||\nabla_HT|^{q}dxdydz\nonumber\\
 &\leq& q\int_M\left(\int_{-h}^h |\nabla_HT|^{q-1}|\partial_z\nabla_H T|dz\right)\left(\int_{-h}^h|\nabla_Hv|dz\right)dxdy \nonumber\\
 &\leq&C\left\|\int_{-h}^h|\nabla_Hv|dz\right\|_{L^\infty(M)}\|\nabla_HT\|_q^{\frac{q}{2}}\big\||\nabla_H T|^{\frac{q-2}{2}}\partial_z\nabla_HT\big\|_2\nonumber\\
 &\leq&\frac{1}{4}\big\||\nabla_H T|^{\frac{q-2}{2}}\partial_z\nabla_HT\big\|_2^2+C\|\nabla_Hv\|_{L^1_zL^\infty_M}^2\|\nabla_HT\|_q^q,
 \label{ADD24.5.5-10}
 \end{eqnarray}
 from which, noticing that $\|\nabla_Hv(\cdot,z,t)\|_{L^\infty(M)}\leq C\|\nabla_H^2v(\cdot,z,t)\|_{L^q(M)}$ guaranteed by the Sobolev and Poincare inequalities, one obtains
 \begin{equation*}
 \int_{\Omega}|\nabla_Hv||\nabla_HT|^{q}dxdydz\leq  \frac{1}{4}\big\||\nabla_H T|^{\frac{q-2}{2}}\partial_z\nabla_HT\big\|_2^2
 +C\|\nabla_H^2v\|_{L^1_zL^q_M}^{2}\|\nabla_HT\|_q^q.\label{ADD24.5.5-11}
\end{equation*}
Thanks to the above, it follows from (\ref{ADD24.5.5-9}) that
\begin{eqnarray}
\frac{d}{dt}\|\nabla_HT\|_q^q+\int_{\Omega}|\nabla_HT|^{q-2}|\partial_z\nabla_HT|^2dxdydz
\leq C\|\nabla_H^2v\|_{L^1_zL^q_M}^2(\|\nabla_H T\|_q^{q}+1),\label{ADD5.4-1}
\end{eqnarray}
where the Young inequality was used.

By Proposition \ref{PRO-PARA-EST1}, it follows from the Minkowski inequality that
$$
\int_0^t \|\nabla_H^2v\|_{L^1_zL^q_M}^2 ds=\|\nabla_H^2 v\|_{L^2_{(0,t)}L_z^1L_M^q}^2\leq
 \|\nabla_H^2 v\|_{L_z^1L^2_{(0,t)}L_M^q}^2\leq C+C\int_0^t\|\nabla_HT\|_q^2ds.
$$
With the aid of this and applying the Gronwall inequality to (\ref{ADD5.4-1}) yield 
\begin{eqnarray}
&&\|\nabla_HT\|_q^q(t)+\int_0^t\left\||\nabla_HT|^{\frac  q2-1}\nabla_H\partial_zT\right\|_2^2ds\nonumber\\
&\leq&
e^{C\int_0^t \|\nabla_H^2v\|_{L^1_zL^q_M}^2 ds}(\|\nabla_HT_0\|_q^q+1)
\leq Ce^{C+C\int_0^t\|\nabla_HT\|_q^2ds},\label{0813-1}
\end{eqnarray}
from which, setting $F(t):=\int_0^t\|\nabla_HT\|_q^2ds$, one arrives at 
$$
F'(t)=\|\nabla_HT\|_q^2(t)\leq C_*e^{C_*F(t)}
$$
for a positive constant depending only on $h$ and $M_0$. Solving the above ODE yields  
$F(t)\leq\frac{\log 2}{C_*}$ for any $t\leq\mathcal T_0:=\frac{1}{2C_*}$. Thanks to this and recalling the definition of $F(t)$, the conclusion follows from (\ref{0813-1}).
\end{proof}

\subsection{Global in time a priori estimate on $\nabla_HT$}

\begin{proposition}
  \label{pro4.4}
Suppose the assumptions of Proposition \ref{prop2.1} hold and $\mathcal T$ is a given positive time. Then, the following estimate holds
\begin{eqnarray*}
\sup_{0\leq t \leq \mathcal{T}}\|\nabla_H T\|_q^q + \int_{0}^{\mathcal{T}}\int_{\Omega}|\nabla_HT|^{q-2}|\partial_z\nabla_HT|^2dxdydzdt\leq C
\end{eqnarray*}
for a positive constant $C$ depending only on $q$, $h$, $M_0$, and $\mathcal{T}$.
\end{proposition}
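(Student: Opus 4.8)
The plan is to upgrade the short-time bound of Proposition \ref{pro3.6} to a global-in-time one, replacing the Gr\"onwall argument there (which only survives on $(0,\mathcal T_0)$ because of the super-linear feedback $\int_0^t\|\nabla_HT\|_q^2$ coming from Proposition \ref{PRO-PARA-EST1}) by a logarithmic Gr\"onwall argument (Lemma \ref{lemma5}) that is valid away from the initial time. Concretely, I would fix $\underline{\mathcal T}=\min\{\mathcal T_0,\mathcal T\}/2$. If $\mathcal T\le\mathcal T_0$ the conclusion is already contained in Proposition \ref{pro3.6}, so I assume $\mathcal T>\mathcal T_0$; then Proposition \ref{pro3.6} covers $[0,\mathcal T_0]$ and in particular gives $\|\nabla_HT\|_q^q(\underline{\mathcal T})\le C$, which serves as the initial datum for the long-time estimate on $[\underline{\mathcal T},\mathcal T]$.

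The starting point on $[\underline{\mathcal T},\mathcal T]$ is the differential inequality already derived in the proof of Proposition \ref{pro3.6}, namely \eqref{ADD24.5.5-9},
\[
\tfrac1q\tfrac{d}{dt}\|\nabla_HT\|_q^q+\tfrac34\!\int_\Omega|\nabla_HT|^{q-2}|\partial_z\nabla_HT|^2\,dxdydz
\le\int_\Omega|\nabla_Hv||\nabla_HT|^q\,dxdydz+C\|\nabla_H^2v\|_{L^1_zL^q_M}^2\|\nabla_HT\|_q^{q-2}.
\]
The two terms on the right are treated separately, and the decisive difference from the short-time proof is that I would \emph{not} reinsert Proposition \ref{PRO-PARA-EST1} for the $\nabla_H^2v$ factor; instead I bound $\nabla_H^2v$ through the div--curl relations \eqref{eq16} and the anisotropic estimate of Proposition \ref{pro4.3}. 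Since $\nabla_H\cdot v=\eta-\Phi$ and $\nabla_H^\perp\cdot v=\theta$, elliptic (Calder\'on--Zygmund) regularity on the torus gives $\|\nabla_H^2v\|_{L^1_zL^q_M}\le C\|\nabla_H(\eta,\theta)\|_{L^2_zL^q_M}+C\|\nabla_HT\|_q$, so the last term is $\le C(f_1(t)+1)(\|\nabla_HT\|_q^q+e)$ with $f_1:=\|\nabla_H(\eta,\theta)\|_{L^2_zL^q_M}^2\in L^1(\underline{\mathcal T},\mathcal T)$ by Proposition \ref{pro4.3}.

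For the genuinely dangerous term $\int_\Omega|\nabla_Hv||\nabla_HT|^q$ I would use the decomposition $v=v_s+v_T$ into its temperature-determined part $v_T$ (curl-free, with $\nabla_H\cdot v_T$ governed by $\Phi$) and its velocity-dominated part $v_s$ (with $\nabla_H\cdot v_s=\eta$ and $\nabla_H^\perp\cdot v_s=\theta$). For the $v_s$ contribution, using $\nabla_HT|_{z=0}=0$ to trade a power of $|\nabla_HT|$ for $|\partial_z\nabla_HT|$ exactly as in \eqref{ADD24.5.5-10}, then the Sobolev embedding $W^{1,q}(M)\hookrightarrow L^\infty(M)$ (valid for $q>2$) together with Propositions \ref{pro4.2} and \ref{pro4.3}, one gets $\int_\Omega|\nabla_Hv_s||\nabla_HT|^q\le\tfrac14\,\big\||\nabla_HT|^{\frac q2-1}\partial_z\nabla_HT\big\|_2^2+Cf_2(t)\|\nabla_HT\|_q^q$ with $f_2:=\|\nabla_Hv_s\|_{L^2_zL^\infty_M}^2\in L^1(\underline{\mathcal T},\mathcal T)$. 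For the $v_T$ contribution I would apply the Beale--Kato--Majda logarithmic inequality Lemma \ref{lemma4} slicewise in $z$: since $\nabla_H^\perp\cdot v_T=0$ and $\|\nabla_H\cdot v_T\|_{L^\infty_M}\le C\|T\|_\infty\le C$ (here the uniform bound on $T$ from Proposition \ref{pro3.1} is essential), it follows that $\|\nabla_Hv_T\|_{L^\infty(\Omega)}\le C\log\big(e+\|\nabla_Hv_T\|_{W^{1,q}(M)}\big)\le C\log(e+\|\nabla_HT\|_q)$, whence $\int_\Omega|\nabla_Hv_T||\nabla_HT|^q\le C\log(e+\|\nabla_HT\|_q)\,\|\nabla_HT\|_q^q$.

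Collecting these bounds and setting $A:=\|\nabla_HT\|_q^q+e$, $B:=\int_\Omega|\nabla_HT|^{q-2}|\partial_z\nabla_HT|^2\,dxdydz$, and $f:=C(f_1+f_2+1)\in L^1(\underline{\mathcal T},\mathcal T)$, I would arrive at an inequality of the form $A'+cB\le A\,[\,C\log A+f(t)\,]$ with $c>0$, which is of the logarithmic Gr\"onwall type covered by Lemma \ref{lemma5} (with $m=\ell=0$, so the constraint $\int m\le\int A$ is trivial); applying it yields the uniform bound, together with $\int B\le C$, on $[\underline{\mathcal T},\mathcal T]$, and combining with Proposition \ref{pro3.6} on $[0,\underline{\mathcal T}]$ closes the argument. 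The main obstacle is the $v_T$ term: the whole scheme works only because Lemma \ref{lemma4} produces \emph{logarithmic} rather than polynomial growth in $\nabla_HT$, and this hinges on controlling the divergence and curl of $v_T$ in $L^\infty_M$ by $\|T\|_\infty$ alone; securing the companion $L^2_tL^2_zL^\infty_M$ control of $\nabla_Hv_s$ (hence Proposition \ref{pro4.3}, which is available only away from $t=0$) is precisely what forces the split into the short-time and long-time regimes.
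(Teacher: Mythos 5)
Your proposal is correct and follows essentially the same route as the paper: restrict to times past the short-time interval of Proposition \ref{pro3.6}, start from \eqref{ADD24.5.5-9}, split $v$ into the temperature-determined part $v_T$ and the velocity-dominated part $v_s$ via \eqref{eq41}--\eqref{eq40}, control $\nabla_Hv_T$ in $L^\infty$ by the Beale--Kato--Majda inequality (Lemma \ref{lemma4}) using only $\|T\|_\infty$, control $\nabla_Hv_s$ in $L^2_t L_z L^\infty_M$ through Proposition \ref{pro4.3}, and close with the logarithmic Gr\"onwall lemma. The one place you genuinely diverge is the term $C\|\nabla_H^2v\|_{L^1_zL^q_M}^2\|\nabla_HT\|_q^{q-2}$: the paper keeps it as the quantity $E_2$ and verifies $\int_{\mathcal T_0}^tE_2\,ds\le C+C\int_{\mathcal T_0}^t\|\nabla_HT\|_q^2\,ds$ via the maximal-regularity estimate of Proposition \ref{PRO-PARA-EST1} (i.e.\ it invokes the $m$-clause of Lemma \ref{lemma5}), whereas you bound $\|\nabla_H^2v\|_{L^1_zL^q_M}$ by $\|\nabla_H(\eta,\theta)\|_{L^2_zL^q_M}+\|\nabla_HT\|_q$ through Calder\'on--Zygmund applied to the div--curl relations and absorb it into the $f(t)A$ part using Proposition \ref{pro4.3} again. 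Both closures are valid; yours makes the long-time argument independent of Proposition \ref{PRO-PARA-EST1} and uses only the trivial $m=0$ case of Lemma \ref{lemma5}, at the cost of one more appeal to $L^q$ elliptic regularity (which the paper uses elsewhere anyway, e.g.\ in \eqref{eq0415-1} and \eqref{eq327-4}). The only cosmetic caveat, shared by the paper's own application, is that the resulting inequality carries a constant in front of $A\log A$, so Lemma \ref{lemma5} must be read in its obvious slightly generalized form.
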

\begin{proof}
Let $\mathcal T_0$ be the positive time stated in Proposition \ref{pro3.6}.
Due to Proposition \ref{pro3.6}, it suffices to prove
\begin{eqnarray*}
\sup_{\mathcal{T}_0\leq t \leq \mathcal{T}}\|\nabla_H T\|_q^q + \int_{\mathcal{T}_0}^{\mathcal{T}}\int_{\Omega}|\nabla_HT|^{q-2}|\partial_z\nabla_HT|^2dxdydzdt\leq C.
\end{eqnarray*}
Recall (\ref{ADD24.5.5-9}), that is
\begin{eqnarray}
 &&\frac{1}{q}\frac{d}{dt}\|\nabla_HT\|_q^q+\frac34\int_{\Omega}|\nabla_HT|^{q-2} |\partial_z\nabla_HT|^2 dxdydz \nonumber\\
 &\leq & \int_{\Omega}|\nabla_Hv||\nabla_HT|^{q}dxdydz+C\|\nabla_H^2v\|_{L^1_z L^q_M }^2\|\nabla_H T\|_q^{q-2}.
\label{eq42}
\end{eqnarray}

Decompose $v=v_T+v_s+\frac{1}{|M|}\int_Mvdxdy$, where $v_T$ and $v_s$, respectively, satisfy the elliptic systems
\begin{eqnarray}
\label{eq41}
 \left\{
  \begin{array}{l}
    \nabla_H\cdot v_T=-\Phi+\frac{1}{|M|}\int_M\Phi dxdy,\\
    \nabla_H^{\bot}\cdot v_T=0,\quad\int_Mv_Tdxdy=0,\quad v_T\text{ is periodic in }x,y,
  \end{array}
  \right.
\end{eqnarray}
and
\begin{eqnarray}
\label{eq40}
 \left\{
  \begin{array}{l}
    \nabla_H\cdot v_s=\eta-\frac{1}{|M|}\int_M\Phi dxdy,\\
    \nabla_H^{\bot}\cdot v_s=\theta,\quad\int_Mv_sdxdy=0,\quad v_s\text{ is periodic in }x,y,
  \end{array}
  \right.
\end{eqnarray}
where $\Phi$ is as in (\ref{eq16}).

We are going to estimate $ \int_{\Omega}|\nabla_Hv_T||\nabla_HT|^{q}dxdydz$ and $ \int_{\Omega}|\nabla_Hv_s||\nabla_HT|^{q}dxdydz$, separately.
For $v_s$, similar to (\ref{ADD24.5.5-10}), one has
\begin{equation}
\int_{\Omega}|\nabla_Hv_s||\nabla_HT|^{q}dxdydz
\leq\frac{1}{4}\big\||\nabla_HT|^{\frac{q-2}{2}}\partial_z\nabla_HT\big\|_2^2+C\|\nabla_H v_s\|_{L_z^1 L_M^\infty }^2\|\nabla_HT\|_q^q.\label{eq45}
\end{equation}
For $v_T$, it follows from the the elliptic estimate to (\ref{eq41}) and the definition of $\Phi$ (see (\ref{eq16})) that
\begin{eqnarray*}
&&\|\nabla_H^2 v_T \|_{L^q(M)}\leq C\|\nabla_H \Phi\|_{L^q(M)}\\
&=&C\left\|\int_{-h}^z \nabla_H Td\xi -\frac{1}{2h}\int_{-h}^h\int_{-h}^z\nabla_H Td\xi dz\right\|_{L^q(M)}
\leq C\|\nabla_H T\|_{q}.
\end{eqnarray*}
 Thanks to this, one deduces by Lemma \ref{lemma4} and Proposition \ref{pro3.1} that
 \begin{eqnarray*}
 &&\|\nabla_H v_T(.,z,t)\|_{L^\infty(M)} \\
 &\leq &C\left(\|\nabla_H\cdot v_T(.,z,t)\|_{L^\infty(M)}+\|\nabla_H^{\bot}\cdot v_T(.,z,t)\|_{L^\infty(M)}\right)\\
 &&\times\log\left(e+\|\nabla_H^2 v_T(.,z,t)\|_{L^q(M)}\right)\\
 &\leq &C\|T\|_{\infty}\log\left(e+\|\nabla_H T\|_q\right) \leq  C\log(e+\|\nabla_H T\|_q), \quad \forall z\in [-h,h],
\end{eqnarray*}
and thus
$\|\nabla_Hv_T\|_\infty\leq C\log(e+\|\nabla_H T\|_q).$
As a result, one has
 \begin{equation}
\int_{\Omega}|\nabla_Hv_T||\nabla_HT|^{q}dxdydz
\leq C\|\nabla_HT\|_q^q\log(e+\|\nabla_H T\|_q). \label{eq44}
\end{equation}

Substituting (\ref{eq45}) and (\ref{eq44}) into (\ref{eq42}) yields
\begin{eqnarray}
&&\frac{1}{q}\frac{d}{dt}\|\nabla_HT\|_q^q+\int_{\Omega}|\nabla_HT|^{q-2}|\partial_z\nabla_HT|^2dxdydz\nonumber\\
&\leq& C\|\nabla_H^2v\|_{L^1_z L^q_M }^2\|\nabla_H T\|_q^{q-2}+C\|\nabla_H v_s \|_{L_z^1 L_M^{\infty} }^2\|\nabla_HT\|_q^q\nonumber\\
&&+C\|\nabla_HT\|_q^q\log(e+\|\nabla_HT\|_q). \label{eq0415-2}
\end{eqnarray}

It remains to estimate $\int_{\mathcal T_0}^t\|\nabla_H^2v\|_{L^1_z L^q_M }^2ds$ and $\int_{\mathcal T_0}^t\|\nabla_H v_s \|_{ L_z^1 L_M^{\infty} }^2ds$.
By Proposition \ref{PRO-PARA-EST1}, it follows from the Minkowski inequality that
\begin{eqnarray}
 \int_{\mathcal T_0}^t\|\nabla_H^2v\|_{L^1_z L^q_M }^2ds&=&\|\nabla_H^2 v\|_{L^2_{(\mathcal T_0,t)}L_z^1L_M^q}^2\leq
 \|\nabla_H^2 v\|_{L_z^1 L^2_{(\mathcal T_0,t)}L_M^q}^2  \nonumber\\
 &\leq& C+C\int_{\mathcal T_0}^t\|\nabla_HT\|_q^2dt,\quad\forall t>\mathcal T_0.   \label{eq43}
\end{eqnarray}
By the Sobolev and Poincar\'e inequalities and applying the elliptic estimate to (\ref{eq40}) yield
\begin{align*}
\|\nabla_H v_s \|_{L^\infty(M)}&\leq C\|\nabla_H^2 v_s \|_{L^q(M)}
 \leq C\|\nabla_H(\eta, \theta)\|_{L^q(M)},
\end{align*}
from which, by the Minkowski and H\"older inequalities and Proposition \ref{pro4.3}, one deduces
\begin{eqnarray}
 \int_{\mathcal T_0}^\mathcal T \|\nabla_H v_s \|_{L_z^1 L_M^{\infty} }^2ds&=&\|\nabla_H v_s\|^2_{L^2_{(\mathcal T_0, \mathcal{T})} L_z^1 L^\infty_M}\leq C\|\nabla_H v_s\|^2_{L_z^2L^2_{(\mathcal T_0, \mathcal{T})}L^\infty_M} \nonumber\\
 &\leq&  C \|\nabla_H(\eta, \theta)\|^2_{L_z^2L^2_{(\mathcal T_0, \mathcal{T})}L^q_M}
\leq C.\label{eq0415-1}
\end{eqnarray}

 Denote $A_2$, $B_2$, $\varpi$, and $E_2$ as
 \begin{align*}
 &A_2:=(\|\nabla_H T\|_q^q+e)^{\frac{1}{q}},\quad B_2:=\displaystyle\int_{\Omega}|\nabla_HT|^{q-2}|\partial_z\nabla_HT|^2dxdydz,\\
 &\varpi:=\|\nabla_H v_s \|_{L_z^1 L_M^\infty }^2, \quad  E_2:=\|\nabla_H^2v\|_{L^1_z L^q_M }^2.
 \end{align*}
Dividing both sides of (\ref{eq0415-2}) by $A_2^{q-2}$, and recalling (\ref{eq43}) and (\ref{eq0415-1}), one obtains
 \begin{equation*}
  \left\{
  \begin{array}{l}
 \frac{d}{dt}A_2^2+\frac{B_2}{A_2^{q-2}}\leq CE_2+CA_2^2\log A_2+C\varpi A_2^2,\\
  \int_{\mathcal{T}_0}^t E_2(s)ds\leq C+\int_{\mathcal{T}_0}^t A_2^2(s)ds,\quad\mbox{with }
   \int_{\mathcal T_0}^\mathcal T\varpi dt\leq C.
  \end{array}
  \right.
\end{equation*}
Therefore, by Lemma \ref{lemma5}, the conclusion follows.
\end{proof}

\subsection{Overall a priori estimates}

\begin{proposition}
  \label{cor1}
Suppose the assumptions of Proposition \ref{prop2.1} hold and $\mathcal T$ is a given positive time. Then, the following estimate holds
\begin{eqnarray*}
\sup_{0\leq t \leq \mathcal T} \|(\nabla_HT, \partial_z T)\|_2^2(t)
+ \int_0^{\mathcal T}\|(\partial_z^2 T, \partial_z\nabla_H T)\|_2^2dxdydzdt\leq C,
\end{eqnarray*}
where C is a positive constant depending only $q$, $h$, $M_0$, and $\mathcal T$.
\end{proposition}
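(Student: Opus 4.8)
The plan is to split the desired bound into a vertical half (controlling $\partial_z T$ and $\partial_z^2T$) and a horizontal half (controlling $\nabla_HT$ and $\partial_z\nabla_HT$), obtained respectively by testing (\ref{eq10}) against $-\partial_z^2T$ and by applying $\nabla_H$ to (\ref{eq10}) and pairing with $\nabla_HT$. Two facts will be used throughout. First, $\sup_{0\le t\le\mathcal T}\|\nabla_HT\|_2\le C$ is immediate from Proposition \ref{pro4.4}, since $\|\nabla_HT\|_q\le C$ and $\Omega$ is bounded with $q>2$; so the $\nabla_HT$ part of the supremum bound is free. Second, combining Proposition \ref{PRO-PARA-EST1} with Proposition \ref{pro4.4} (via Minkowski) gives $\int_0^{\mathcal T}\|\nabla_H^2v\|_{L_z^1L_M^q}^2\,ds\le C$, while Proposition \ref{pro4.2} together with the elliptic estimate for the system $(\nabla_H\cdot v,\nabla_H^{\bot}\cdot v)=(\eta-\Phi,\theta)$ and $\|\nabla_H\Phi\|_2\le C\|\nabla_HT\|_2\le C$ gives $\int_0^{\mathcal T}\|\nabla_H^2v\|_2^2\,ds\le C$. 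Note both of these coefficients are only $L_t^2$, which is what dictates the delicacy below.

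For the vertical half, multiplying (\ref{eq10}) by $-\partial_z^2T$ and integrating by parts in $z$ yields $\tfrac12\tfrac{d}{dt}\|\partial_zT\|_2^2+\|\partial_z^2T\|_2^2=\int_\Omega(v\cdot\nabla_HT)\partial_z^2T\,dxdydz+\int_\Omega w\,\partial_zT\,\partial_z^2T\,dxdydz$. The first term is $\le\tfrac14\|\partial_z^2T\|_2^2+C\int_\Omega|v|^2|\nabla_HT|^2$, and $\int_\Omega|v|^2|\nabla_HT|^2\le\|v\|_{\frac{2q}{q-2}}^2\|\nabla_HT\|_q^2\le C$ by Propositions \ref{pro3.1} and \ref{pro4.4}. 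In the second term, $\partial_zw=-\nabla_H\cdot v$ and an integration by parts in $z$ give $\int_\Omega w\,\partial_zT\,\partial_z^2T=\tfrac12\int_\Omega(\nabla_H\cdot v)(\partial_zT)^2$; bounding $\|\nabla_Hv(\cdot,z)\|_{L_M^\infty}\le C\|\nabla_H^2v(\cdot,z)\|_{L_M^q}$ by Sobolev--Poincar\'e and $\sup_z\|\partial_zT(\cdot,z)\|_{L_M^2}^2\le C(\|\partial_zT\|_2^2+\|\partial_zT\|_2\|\partial_z^2T\|_2)$ as in Lemma \ref{pro3.2}, I can absorb a small multiple of $\|\partial_z^2T\|_2^2$ and reach $\tfrac{d}{dt}\|\partial_zT\|_2^2+\|\partial_z^2T\|_2^2\le C+C(1+\|\nabla_H^2v\|_{L_z^1L_M^q}^2)\|\partial_zT\|_2^2$. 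As the coefficient is $L_t^1$, Gr\"onwall (or Lemma \ref{lemma5}) gives $\sup_t\|\partial_zT\|_2^2+\int_0^{\mathcal T}\|\partial_z^2T\|_2^2\le C$.

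For the horizontal half, applying $\nabla_H$ to (\ref{eq10}) and pairing with $\nabla_HT$ produces, after the standard integrations by parts, $\tfrac12\tfrac{d}{dt}\|\nabla_HT\|_2^2+\|\partial_z\nabla_HT\|_2^2=J_1+J_2$, where $J_1$ gathers the terms bounded by $C\int_\Omega|\nabla_Hv||\nabla_HT|^2$ and $J_2=-\int_\Omega(\nabla_Hw\cdot\nabla_HT)\,\partial_zT\,dxdydz$. The term $J_1$ is handled by the oddness of $T$ in $z$, which gives $\nabla_HT|_{z=0}=0$ and hence $\sup_z|\nabla_HT|^2\le2\int_{-h}^h|\nabla_HT||\partial_z\nabla_HT|\,dz$, exactly as in (\ref{ADD24.5.5-10}); this leads to $J_1\le\tfrac14\|\partial_z\nabla_HT\|_2^2+C\|\nabla_H^2v\|_{L_z^1L_M^q}^2\|\nabla_HT\|_2^2$, whose time integral is $\le C$ by the auxiliary facts and $\sup_t\|\nabla_HT\|_2\le C$.

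The main obstacle is $J_2$: with no horizontal dissipation in (\ref{eq10}), $\partial_zT$ is controlled only in $L^2$ horizontally, whereas pairing the $z$-independent factor $|\nabla_Hw|\le\|\nabla_H^2v\|_{L_z^1}\in L_M^q$ against $\int_{-h}^h|\partial_zT||\nabla_HT|\,dz$ by H\"older forces one of $\partial_zT,\nabla_HT$ into $L_M^{2q/(q-2)}$, and boosting $\partial_zT$ there costs an extra power of $\|\partial_z\nabla_HT\|_2$ that the merely $L_t^2$ bound on $\|\nabla_H^2v\|_{L_z^1L_M^q}$ cannot absorb. I resolve this by integrating $J_2$ by parts in $z$ first; since everything is periodic in $z$ and $\partial_z\nabla_Hw=-\nabla_H(\nabla_H\cdot v)$,
\[ J_2=\int_\Omega\partial_z\big(\nabla_Hw\cdot\nabla_HT\big)T\,dxdydz=-\int_\Omega\big(\nabla_H(\nabla_H\cdot v)\cdot\nabla_HT\big)T\,dxdydz+\int_\Omega\big(\nabla_Hw\cdot\partial_z\nabla_HT\big)T\,dxdydz. \]
Now the uniform bound $\|T\|_\infty\le\|T_0\|_\infty$ (Proposition \ref{pro3.1}) eliminates the troublesome $\partial_zT$: the first integral is $\le C\|\nabla_H^2v\|_2\|\nabla_HT\|_2$, which is time-integrable since both factors are bounded and $L_t^2$; the second is $\le C\|\nabla_H^2v\|_{L_z^1L_M^q}\|\partial_z\nabla_HT\|_2\le\epsilon\|\partial_z\nabla_HT\|_2^2+C_\epsilon\|\nabla_H^2v\|_{L_z^1L_M^q}^2$. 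Integrating the horizontal identity in time, absorbing the $\partial_z\nabla_HT$ contributions of $J_1$ and $J_2$ into the left-hand side, and using $\sup_t\|\nabla_HT\|_2\le C$ for the boundary terms yields $\int_0^{\mathcal T}\|\partial_z\nabla_HT\|_2^2\le C$. Combined with the vertical half, this is precisely the claimed estimate.
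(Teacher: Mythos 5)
Your proposal is correct and follows essentially the same route as the paper: the vertical half is obtained, as in the paper, by testing (\ref{eq10}) with $-\partial_z^2T$, and the horizontal half is the $q=2$ analogue of the computation (\ref{ADD24.5.5-7})--(\ref{ADD5.4-1}), closed by plain Gr\"onwall now that Propositions \ref{PRO-PARA-EST1} and \ref{pro4.4} make $\int_0^{\mathcal T}\|\nabla_H^2v\|_{L^1_zL^q_M}^2\,dt$ finite --- which is precisely what the paper compresses into ``by much simpler calculations'' when asserting (\ref{eq327-0}). The only cosmetic differences are in individual term bounds (you use $\|v\|_{2q/(q-2)}\|\nabla_HT\|_q$ and the $\sup_z$ trace inequality where the paper invokes Lemma \ref{lemma1} and the interpolation $\|\partial_zT\|_4^2\le C\|\partial_z^2T\|_2$), and these are all legitimate.
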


\begin{proof}
Following the arguments in Propositions \ref{pro3.6}--\ref{pro4.4}, actually by much simpler calculations, one obtains
\begin{align}
\sup_{0\leq t \leq \mathcal T} \|\nabla_HT\|_2^2 + \int_0^{\mathcal T}\|\partial_z\nabla_H T\|_2^2dxdydzdt\leq C. \label{eq327-0}
\end{align}
Multiplying (\ref{eq10}) by $-\partial_z^2 T$, it follows from integration by parts that
\begin{eqnarray}
&&\frac{1}{2}\frac{d}{dt}\|\partial_zT\|_2^2+\|\partial_z^2T\|_2^2=\int_{\Omega}(v\cdot\nabla_HT+w\partial_zT)\partial_z^2Tdxdydz\nonumber\\
&=&-\int_{\Omega}\left(v\cdot\partial_z\nabla_H T\partial_zT+u\cdot\nabla_HT\partial_zT-\frac{1}{2}(\nabla_H\cdot v)|\partial_zT|^2\right)dxdydz.\label{eq37}
\end{eqnarray}
Noticing that $ |v|\leq \int_{-h}^h\left(\frac{|v|}{2h} +|\partial_zv|\right)dz$ and $ |\nabla_H T|\leq\int_{-h}^h|\partial_z\nabla_H T|dz$, it follows from Lemma \ref{lemma1} and the Young inequality that
\begin{eqnarray*}
&&\left|\int_{\Omega}\left(v\cdot\partial_z\nabla_H T\partial_zT+u\cdot\nabla_HT\partial_zT\right)dxdydz\right|\nonumber\\
&\leq&\frac{1}{2h} \int_M\left(\int_{-h}^h|v|dz\right)\left(\int_{-h}^h|\partial_z\nabla_H T||\partial_zT|dz\right)dxdy\nonumber\\
&&+\int_M\left(\int_{-h}^h|u|dz\right)\left(\int_{-h}^h|\partial_z\nabla_H T||\partial_zT|dz\right)dxdy\nonumber\\
&&+\int_M\left(\int_{-h}^h|\partial_z\nabla_H T|dz\right)\left(\int_{-h}^h|u||\partial_zT|dz\right)dxdy\nonumber\\
&\leq& C\|\partial_z\nabla_HT\|_2\|v\|_2^{\frac{1}{2}}\|(v,\nabla_Hv)\|_2^{\frac{1}{2}})\|\partial_zT\|_2^{\frac{1}{2}}
\|(\partial_zT,\partial_z\nabla_HT)\|_2^{\frac{1}{2}}\nonumber\\
&&+C\|\partial_z\nabla_HT\|_2\|u\|_2^{\frac{1}{2}}\|(u,\nabla_Hu)\|_2^{\frac{1}{2}}\|\partial_zT\|_2^{\frac{1}{2}}
\|(\partial_zT,\partial_z\nabla_HT)\|_2^{\frac{1}{2}}\nonumber\\
&\leq& \frac{1}{4}\|\partial_z\nabla_HT\|_2^2+C(\|v\|_2^2\|\nabla_Hv\|_2^2+\|v\|_2^4\nonumber\\
&&+\|u\|_2^2\|\nabla_Hu\|_2^2+\|u\|_2^4+1)\|\partial_zT\|_2^2.\label{eqTz1}
\end{eqnarray*}
Consequently, by Propositions \ref{pro3.1}--\ref{pro4.2} and Corollary \ref{COR1-1}, one has
\begin{eqnarray}
\left|\int_{\Omega} \left(v\cdot\partial_z\nabla_H T\partial_zT+u\cdot\nabla_HT\partial_zT\right)dxdydz\right|\nonumber\\
 \leq \frac{1}{4}\|\partial_z\nabla_HT\|_2^2+C(1+\|\nabla_H u\|_2^2)\|\partial_zT\|_2^2.\label{eq327-1}
\end{eqnarray}
Noticing that $T|_{z=-h}=T|_{z=h}=0$, integration by parts, and using the H\"{o}lder inequality yield
\begin{align*}
\int_{\Omega}|\partial_zT|^4dxdydz&=-\int_{\Omega}\partial_z(|\partial_zT|^2\partial_zT)Tdxdydz\\
&\leq3\|T\|_{\infty}\|\partial_z^2T\|_2\|\partial_zT\|_4^2,
\end{align*}
from which, using Proposition \ref{pro3.1}, one gets
\begin{align}
\|\partial_zT\|_4^2\leq 3\|T\|_{\infty}\|\partial_z^2T\|_2\leq C\|\partial_z^2T\|_2. \label{eq327-2}
\end{align}
Recalling the definition of $\eta$ (see (\ref{eq16})), one deduces by the H\"{o}lder and Young inequalities and (\ref{eq327-2}) that
\begin{eqnarray*}
\int_{\Omega}(\nabla_H\cdot v)|\partial_zT|^2dxdydz= \int_{\Omega}(\eta-\Phi)|\partial_zT|^2dxdydz\nonumber\\
\leq \|\eta\|_2\|\partial_zT\|_4^2+\|\Phi\|_{\infty}\|\partial_zT\|_2^2.\label{eqTz2}
\end{eqnarray*}
As a result, by Propositions \ref{pro3.1}--\ref{pro4.2}, one obtains
\begin{align}
\int_{\Omega}(\nabla_H\cdot v)|\partial_zT|^2dxdydz
\leq& \frac{1}{4}\|\partial_z^2T\|_2^2+C(1+\|\partial_zT\|_2^2).\label{eq327-3}
\end{align}
Substituting (\ref{eq327-1}) and (\ref{eq327-3}) into (\ref{eq37}) yields
\begin{align*}
\frac{d}{dt}\|\partial_zT\|_2^2+\|\partial_z^2T\|_2^2\leq C(1+\|\nabla_Hu\|_2^2)(\|\partial_zT\|_2^2+1)
\end{align*}
which, by the Gr\"{o}nwall inequality and Proposition \ref{pro4.2}, implies
\begin{eqnarray*}
\sup_{0\leq t \leq\mathcal T} \|\partial_z T\|_2^2 + \int_0^{\mathcal T}\|\partial_z^2 T\|_2^2dt\leq C.
\end{eqnarray*}
Combining this with (\ref{eq327-0}), the conclusion follows.
\end{proof}

\begin{proposition}
  \label{pro3.8}
Suppose the assumptions of Proposition \ref{prop2.1} hold and $\mathcal T$ is a given positive time. Then, it holds that
\begin{align*}
\int_0^{\mathcal T}(\|\nabla_H^2v\|_2^2+\|\partial_t v\|_2^2+\| \partial_t T\|_2^2) \leq C
\end{align*}
for a positive constant $C$ depending only $q$, $h$, $M_0$, and $\mathcal T$.
\end{proposition}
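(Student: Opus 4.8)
The plan is to read off $\partial_t v$ and $\partial_t T$ from the evolution equations (\ref{eq8}) and (\ref{eq10}) and to bound each resulting term in $L^2(0,\mathcal T;L^2(\Omega))$, the only genuinely new ingredient being the time-integrated control of $\nabla_H^2 v$; everything else will follow from the a priori bounds already accumulated in Propositions \ref{pro3.1}--\ref{cor1} and Corollary \ref{COR1-1}. First I would establish $\int_0^{\mathcal T}\|\nabla_H^2 v\|_2^2\,dt\le C$. Since $\nabla_H\cdot v=\eta-\Phi$ and $\nabla_H^\perp\cdot v=\theta$ by (\ref{eq16}), elliptic regularity on the torus gives $\|\nabla_H^2v\|_2\le C\|\nabla_H(\eta-\Phi,\theta)\|_2\le C(\|\nabla_H(\eta,\theta)\|_2+\|\nabla_H T\|_2)$, because $\nabla_H\Phi=\int_{-h}^z\nabla_HT\,d\xi-\frac1{2h}\int_{-h}^h\int_{-h}^z\nabla_HT\,d\xi dz$. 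Integrating in time and using $\int_0^{\mathcal T}\|\nabla_H(\eta,\theta)\|_2^2\,dt\le C$ from Proposition \ref{pro4.2} together with the uniform bound $\|\nabla_HT\|_2\le C$ from Proposition \ref{cor1} yields the claim.

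For the temperature, I would write $\partial_t T=\partial_z^2T-v\cdot\nabla_HT-w\partial_zT$. The term $\partial_z^2 T$ lies in $L^2_tL^2$ by Proposition \ref{cor1}; for $v\cdot\nabla_HT$ I would use H\"older with $v\in L^\infty_tL^{2q/(q-2)}$ (Proposition \ref{pro3.1}) and $\nabla_HT\in L^\infty_tL^q$ (Proposition \ref{pro4.4}). The delicate term is $w\partial_zT$: bounding $|w|\le\int_{-h}^h|\nabla_Hv|\,dz$ and using $\int_M w\,dxdy=0$ (from (\ref{eq11})) together with the two-dimensional Ladyzhenskaya inequality gives $\|w\|_{L^\infty_zL^4_M}^2\le C(1+\|\nabla_H^2v\|_2)$, while $\int_{-h}^h\|\partial_zT\|_{4,M}^2\,dz\le C(1+\|\partial_z\nabla_HT\|_2)$ by Ladyzhenskaya and Proposition \ref{cor1}. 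Combining, $\|w\partial_zT\|_2^2\le C\|\nabla_H^2v\|_2(1+\|\partial_z\nabla_HT\|_2)$, which is integrable in time by Cauchy--Schwarz since both $\nabla_H^2v$ and $\partial_z\nabla_HT$ belong to $L^2_tL^2$ (Step 1 and Proposition \ref{cor1}).

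For the velocity, I would write $\partial_t v=\Delta_Hv-(v\cdot\nabla_H)v-w\partial_zv-f_0\vec k\times v-\nabla_Hp_s+\nabla_H\int_{-h}^zT\,d\xi$. The linear terms $\Delta_Hv$ (Step 1), $f_0\vec k\times v$, and $\nabla_H\int_{-h}^zT\,d\xi$ (controlled by $\|v\|_2$ and $\|\nabla_HT\|_2$) are immediate. For $(v\cdot\nabla_H)v$ and $w\partial_zv=wu$ I would run the same anisotropic argument, using $\sup_z\|v(\cdot,z)\|_{4,M}\le C$ (from Lemma \ref{pro3.2} with Propositions \ref{pro3.1}--\ref{pro4.2}), the estimate on $\|w\|_{L^\infty_zL^4_M}$ above, and $u\in L^\infty_tL^q$, $\nabla_Hu\in L^2_tL^2$ (Proposition \ref{pro4.2}), to obtain $\|wu\|_2^2\le C\|\nabla_H^2v\|_2(1+\|\nabla_Hu\|_2)$ and a similar bound for $\||v||\nabla_Hv|\|_2^2$, both integrable in time. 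Finally $\nabla_Hp_s$ is handled by applying the elliptic estimate to (\ref{eq15}), which reduces it to the $L^2_M$ norm of $\int_{-h}^h\bigl(\nabla_H\cdot(v\otimes v)+f_0\vec k\times v-\int_{-h}^z\nabla_HT\,d\xi\bigr)dz$; the quadratic part is dominated via the same $L^4_M$-in-the-horizontal control of $\int_{-h}^h|v||\nabla_Hv|\,dz$, giving $\|\nabla_Hp_s\|_2^2\le C(1+\|\nabla_H^2v\|_2)$. Summing all contributions and integrating over $(0,\mathcal T)$ yields the asserted bound.

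The main obstacle I anticipate is exactly the two vertical-advection terms $w\partial_zv$ and $w\partial_zT$: because $w$ is only controlled through vertical integrals of $\nabla_Hv$, naive H\"older fails, and one must instead exploit the anisotropic Ladyzhenskaya estimates in the spirit of Lemma \ref{lemma1} and the computations already carried out in Proposition \ref{cor1}, crucially feeding in the newly obtained $L^2_tL^2$ bound on $\nabla_H^2v$.
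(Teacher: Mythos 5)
Your proposal is correct and follows essentially the same route as the paper: read $\partial_t v$ and $\partial_t T$ off the equations, obtain $\int_0^{\mathcal T}\|\nabla_H^2v\|_2^2\,dt\le C$ from the elliptic decomposition $\nabla_H\cdot v=\eta-\Phi$, $\nabla_H^\perp\cdot v=\theta$ together with Propositions \ref{pro4.2} and \ref{cor1}, and control the nonlinear terms (in particular $wu$ and $w\partial_zT$) by anisotropic Ladyzhenskaya estimates followed by Cauchy--Schwarz in time. The only cosmetic differences are that the paper bounds $w$ through $\eta$ and $\Phi$ rather than through $\int_{-h}^h|\nabla_Hv|\,dz$ and $\nabla_H^2v$, and treats $v\cdot\nabla_HT$ via $|\nabla_HT|^2\le 2\int_{-h}^h|\nabla_HT||\nabla_H\partial_zT|\,dz$ instead of your $L^{2q/(q-2)}\times L^q$ H\"older argument; both yield the same conclusion.
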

\begin{proof}

By (\ref{eq8}) and applying the elliptic estimate to (\ref{eq15}), one has
\begin{align*}
&\|\partial_t v\|_2\leq \|v\cdot\nabla_Hv\|_2+\|w u\|_2+\|\Delta_H v\|_2+\|f_0\vec{k}\times v\|_2+\left\|\nabla_H\left(\int_{-h}^zTd\xi\right)\right\|_2\nonumber\\
&\quad+\|\nabla_H p_s\|_2
\leq C\left(\big\||v|\nabla_Hv\big\|_2+\|w u\|_2+\|\Delta_H v\|_2+\|\nabla_H T\|_2+\|v\|_2\right)
\end{align*}
and thus, by Propositions \ref{pro3.1} and \ref{cor1}, it follows that
\begin{align}
\|\partial_t v\|_2\leq C\left(1+\big\||v|\nabla_Hv\big\|_2+\|w u\|_2+\|\Delta_H v\|_2\right).\label{eq38}
\end{align}
By the H\"{o}lder, Gagliardo-Nirenberg, and Poincar\'e inequalities, Proposition \ref{pro3.1}, and Corollary \ref{COR1-1}, one deduces that
\begin{eqnarray}
\big\||v|\nabla_Hv\big\|_2^2&\leq& C\|v\|_6^2\|\nabla_Hv\|_3^2\leq C\|\nabla_Hv\|_2\|\nabla\nabla_Hv\|_2 \nonumber\\
&\leq& C(\|\nabla_H^2v\|_2+\|\nabla_H u\|_2).\label{eqloc1}
\end{eqnarray}
It follows from the H\"{o}lder, Minkowski, and Ladyzhenskaya inequalities that
\begin{eqnarray*}
\|w u\|_2^2
&=&\int_{\Omega}|w u|^2dxdydz=\int_{\Omega}\left(\int_{-h}^z(\eta-\Phi)d\xi\right)^2|u|^2dxdydz\nonumber\\
&\leq&2\int_M\left(\int_{-h}^h|\eta| dz\right)^2\left(\int_{-h}^h|u|^2dz\right)dxdy\nonumber\\
&&+2\int_M\left(\int_{-h}^h|\Phi| dz\right)^2\left(\int_{-h}^h|u|^2dz\right)dxdy\nonumber\\
&\leq&C\left(\int_{-h}^h\|\eta \|_{4,M}dz\right)^2\int_{-h}^h\|u \|_{4,M}^2dz+C\|T\|_{\infty}^2\|u\|_2^2\nonumber\\[0.4em]
&\leq& C\left(\|\eta\|_2\|(\eta,\nabla_H\eta)\|_2\|u\|_2\|(u,\nabla_Hu)\|_2+\|u\|_2^2\right),\label{eqvt3}
\end{eqnarray*}
where $\theta, \eta,$ and $\Phi$ are as in (\ref{eq16}). From the above, one deduces by Proposition \ref{pro4.2} that
\begin{align}
\|w u\|_2^2\leq C(1+\|\nabla_H\eta\|_2)(1+\|\nabla_Hu\|_2).\label{eqloc2}
\end{align}
Substituting (\ref{eqloc1}) and (\ref{eqloc2}) into (\ref{eq38}), it follows from Propositions \ref{pro4.2} and
\ref{cor1} that
\begin{eqnarray*}
\int_0^{\mathcal T} \|(\partial_t v,\nabla_H^2v)\|_2^2dt&\leq& C\int_0^{\mathcal T}\left(1+\|(\nabla_H \eta,\nabla_H u,\nabla_H^2v)\|_2^2\right)dt\\
&\leq& C+C\int_0^{\mathcal T}\|(\nabla_H \eta,\nabla_H u,\nabla_H\theta,\nabla_H T)\|_2^2 dt\leq C,
\end{eqnarray*}
where the following estimate guaranteed by elliptic estimate was used
\begin{align}
\|\nabla_H^2v\|_2^2\leq & C\left(\|\nabla_H\nabla_H\cdot v\|_2^2+\|\nabla_H \nabla_H^{\bot}\cdot v\|_2^2\right)\nonumber\\
\leq& C\left(\|\nabla_H \eta\|_2^2+\|\nabla_H\theta\|_2^2+\|\nabla_H T\|_2^2\right). \label{eq327-4}
\end{align}

By (\ref{eq10}), we have
\begin{eqnarray}
&&\|\partial_t T\|_2\leq \|v\cdot\nabla_HT\|_2+\|w \partial_zT\|_2+\|\partial_z^2 T\|_2. \label{eq39}
\end{eqnarray}
Noticing that $|\nabla_H T|^2\leq 2\int_{-h}^h |\nabla_H T||\nabla_H\partial_z T|dz$, it follows from the H\"{o}lder and Gagliardo-Nirenberg inequalities and (\ref{eq327-4}) that
 \begin{eqnarray*}
\|v\cdot\nabla_HT\|_2^2&=&\int_M\int_{-h}^h|v|^2|\nabla_HT|^2dzdxdy\nonumber\\
&\leq& 2\int_M\left(\int_{-h}^h|v|^2dz\right)\left(\int_{-h}^h|\nabla_HT||\partial_z\nabla_HT|dz\right)dxdy\nonumber\\
&\leq& C\left\|\int_{-h}^h|v|^2dz \right\|_{L^\infty(M)}\|\nabla_HT\|_2\|\nabla_H\partial_zT\|_2\nonumber\\
&\leq& C\int_{-h}^h\|v \|_{\infty,M}^2dz\|\nabla_HT\|_2\|\nabla_H\partial_zT\|_2\nonumber\\[0.3em]
&\leq& C\|v\|_2(\|v\|_2+\|\nabla_H^2v\|_2)\|\nabla_HT\|_2\|\nabla_H\partial_zT\|_2\nonumber\\[0.5em]
&\leq& C\|v\|_2\|(v,\nabla_H \theta,\nabla_H \eta,\nabla_H T)\|_2\|\nabla_HT\|_2\|\nabla_H\partial_zT\|_2.\label{eqTt1}
\end{eqnarray*}
Thus, by Propositions \ref{pro3.1} and \ref{cor1}, one reaches
\begin{align}
\|v\cdot \nabla_H T\|_2^2\leq C\left(1+\|\nabla_H \theta\|_2+\|\nabla_H\eta\|_2\right)\|\nabla_H\partial_z T\|_2. \label{eq327-5}
\end{align}
Same arguments as those for $\|w u\|_2^2$ yield
 \begin{align*}
\|w \partial_zT\|_2^2 \leq  C\left[\|\eta\|_2\|(\eta,\nabla_H\eta)\|_2\|\partial_zT\|_2\|(\partial_zT,\nabla_H\partial_zT)\|_2+\|\partial_zT\|_2^2\right].
\end{align*}
Thus, by Propositions \ref{pro4.2} and \ref{cor1}, one has
\begin{align}
\|w \partial_zT\|_2^2\leq C(1+\|\nabla_H \eta\|_2)(1+\|\nabla_H\partial_z T\|_2). \label{eq327-6}
\end{align}
Substituting (\ref{eq327-5}) and (\ref{eq327-6}) into (\ref{eq39}) and by Propositions \ref{pro4.2} and \ref{cor1}, one deduces
$$\int_0^{\mathcal T} \|\partial_t T\|_2^2dt\leq C\int_0^{t^{**}}(1+\|\nabla_H \eta\|_2^2+\|\nabla_H \theta\|_2^2+\|\nabla_H\partial_z T\|_2^2)dt\leq C.$$
This completes the proof.
\end{proof}

Consequently, it is concluded by Propositions \ref{pro3.1}--\ref{pro4.2} and \ref{pro4.4}--\ref{pro3.8} that

\begin{corollary}
  \label{cor5.1}
Let $(v,T)$ be the unique global strong solution in Proposition \ref{prop2.1} and let $M_0$ be a constant such that (\ref{M0}) holds.
Then, for any given positive time $\mathcal{T}$, the following a priori estimate holds
\begin{align*}
  \sup_{0\leq t\leq \mathcal{T}}(\|(v, T)\|_{H^1}^2+\|(u, \nabla_H T)\|_q^q)
  +\int_{0}^{\mathcal{T}}\left(\|(\nabla_H v, \partial_z T)\|_{H^1}^2+\|(\partial_t v, \partial_t T)\|_2^2\right)dt \leq C
\end{align*}
for a positive constant $C$ depending only on $q$, $h$, $M_0$, and $\mathcal{T}$.
\end{corollary}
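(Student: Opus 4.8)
The plan is to read off the target inequality term by term, matching each norm on the left-hand side with the proposition in which it has already been bounded, and to use the elementary algebraic and elliptic relations between the auxiliary variables $(\eta,\theta,u)$ and the physical derivatives of $v$ to pass between the two descriptions. I would first split the left-hand side into its supremum-in-time part and its time-integral part.

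For the supremum part, $\sup_{0\le t\le\mathcal T}\|(v,T)\|_2^2\le C$ is the basic energy estimate in Proposition \ref{pro3.1}. To bound $\sup\|\nabla v\|_2^2$ I would write $\|\nabla v\|_2^2=\|\nabla_H v\|_2^2+\|u\|_2^2$, take $\|u\|_2^2$ from Proposition \ref{pro4.2}, and for the horizontal part use $\|\nabla_H v\|_2^2=\|\nabla_H\cdot v\|_2^2+\|\nabla_H^\perp\cdot v\|_2^2=\|\eta-\Phi\|_2^2+\|\theta\|_2^2$ together with $\|\Phi\|_\infty\le C\|T\|_\infty$ from Proposition \ref{pro3.1} and the bounds on $\|(\eta,\theta)\|_2$ from Proposition \ref{pro4.2}; this is exactly the first conclusion of Corollary \ref{COR1-1}. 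The remaining supremum quantities $\sup\|(\nabla_H T,\partial_z T)\|_2^2$ are furnished by Proposition \ref{cor1}, while $\sup\|u\|_q^q$ and $\sup\|\nabla_H T\|_q^q$ come from Propositions \ref{pro4.2} and \ref{pro4.4}, respectively.

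For the time-integral part, I would once more decompose into horizontal and vertical derivatives, using $\partial_z\nabla_H v=\nabla_H u$ and $\nabla\partial_z T=(\nabla_H\partial_z T,\partial_z^2 T)$. Thus $\int_0^{\mathcal T}\|\nabla_H v\|_{H^1}^2\,dt$ reduces to $\int_0^{\mathcal T}(\|\nabla_H v\|_2^2+\|\nabla_H^2 v\|_2^2+\|\nabla_H u\|_2^2)\,dt$, where the first integrand is already bounded uniformly in time, the second is supplied by Proposition \ref{pro3.8} (equivalently, via the elliptic identity (\ref{eq327-4}), by $\int_0^{\mathcal T}\|\nabla_H(\eta,\theta,T)\|_2^2\,dt$), and the third by Proposition \ref{pro4.2}. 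Likewise $\int_0^{\mathcal T}\|\partial_z T\|_{H^1}^2\,dt$ reduces to $\int_0^{\mathcal T}(\|\partial_z T\|_2^2+\|\nabla_H\partial_z T\|_2^2+\|\partial_z^2 T\|_2^2)\,dt$, all controlled by Proposition \ref{cor1}, and the time integral of $\|(\partial_t v,\partial_t T)\|_2^2$ is precisely the content of Proposition \ref{pro3.8}. Adding the finitely many contributions yields the claimed bound with $C=C(q,h,M_0,\mathcal T)$.

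Because every quantitative estimate is already established in the preceding propositions, there is no new analytic difficulty; the proof is purely one of assembly. The step requiring the most care is ensuring that the mixed $H^1$-norms on the left are fully expanded, so that no derivative combination---in particular $\nabla_H^2 v$, $\nabla_H u=\nabla_H\partial_z v$, and the cross derivative $\nabla_H\partial_z T$---is inadvertently dropped. The elliptic identity (\ref{eq327-4}) converting $\nabla_H^2 v$ into $\nabla_H(\eta,\theta,T)$ is the only place where a genuine inequality, rather than a direct citation, is invoked, and it too has already been proved above.
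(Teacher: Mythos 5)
Your proposal is correct and is essentially the paper's own argument: the paper states Corollary \ref{cor5.1} as a direct consequence of Propositions \ref{pro3.1}--\ref{pro4.2} and \ref{pro4.4}--\ref{pro3.8} (together with Corollary \ref{COR1-1}), and your term-by-term assembly, including the expansion of the mixed $H^1$ norms into $\nabla_H^2v$, $\nabla_Hu$, $\nabla_H\partial_zT$, $\partial_z^2T$ and the use of the elliptic identity (\ref{eq327-4}), is exactly the intended bookkeeping.
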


\section{Proof of Theorem \ref{the1}}\label{sec4}

\begin{proof}[\bf Proof of Theorem \ref{the1}. Global existence.\rm]

Denote $M=\|(v_0, T_0)\|_{X_1}$ where $X_1$ is the norm given in (\ref{normX1}).
Choose an approximating sequence periodic functions $v_0^N$ and $T_0^N$, for $N=1, 2, \cdots,$ such that
$v_0^N \in H^2(\Omega)$, $T_0^N \in H^1(\Omega)\cap L^{\infty}(\Omega)$, with $\int_{-h}^h \nabla_H \cdot v_0^Ndz=0$ and $\nabla_H T_0^N \in L^q(\Omega)$, for some $q \in (2,\infty)$,
and that $v_0^N$ and $T_0^N$ are even and odd in $z$, respectively, satisfying
\begin{eqnarray*}
  & (v_0^N, T_0^N)\rightarrow (v_0, T_0) \, &\text{as} \, N\rightarrow \infty, \quad in\, H^1(\Omega),
\end{eqnarray*}
and $\|(v_0^N, T_0^N)\|_{X_1} \leq 2M$.
Then, by Proposition \ref{prop2.1}, there exists a unique global solution $(v^N, T^N)$ to system (\ref{eq8})--(\ref{eq10}), subject to (\ref{eq12})--(\ref{eq14}), with initial data $(v_0^N, T_0^N)$.\par
Since $\|(v_0^N, T_0^N)\|_{X_1} \leq 2M$, Corollary \ref{cor5.1} implies that
$$
  \sup_{0\leq t\leq \mathcal{T}}(\|(v^N, T^N)\|_{H^1}^2(t)+\|(u^N, \nabla_H T^N)\|_q^q(t)) \leq  C
  $$ and
  $$
  \int_{0}^{\mathcal{T}}\left(\|(\nabla_H v^N, \partial_z T^N)\|_{H^1}^2+\|(\partial_t v^N, \partial_t T^N)\|_2^2\right)dt  \leq C,
  $$
  where $C$ is a positive constant depending only on $q, h, M,$ and $\mathcal T$, but is independent of $N$.

By the Aubin-Lions lemma (Lemma \ref{lemma7}), there is a subsequence, still denote by $(v^N, T^N)$, and $(v, T)$, such that, as $N\rightarrow \infty$,
\begin{eqnarray*}
  &(v^N, T^N)\rightarrow (v,T)  \,  &\text{in} \, C([0, \mathcal{T}]; L^2(\Omega)),\\
  &(v^N, T^N)\overset{*}{\rightharpoonup}(v, T)\, &\text{in}\, L^{\infty}(0,\mathcal{T}; H^1(\Omega)),\\
  &(u^N, \nabla_H T^N)\overset{*}{\rightharpoonup}(u, \nabla_HT)\,&\text{in}\, L^{\infty}(0,\mathcal{T}; L^q(\Omega)),\\
  &(\nabla_Hv^N, \partial_zT^N)\rightharpoonup(\nabla_Hv, \partial_zT)\,&\text{in}\, L^2(0,\mathcal{T}; H^1(\Omega)),\\
  &(\partial_t v^N, \partial_t T^N)\rightharpoonup (\partial_t v, \partial_t T)\,&\text{in}\,L^2(0,\mathcal{T};L^2(\Omega)),
\end{eqnarray*}
where $\rightharpoonup$ and $\overset{*}{\rightharpoonup}$ denote the weak and weak-* convergences, respectively.
Thanks to these convergences, one can show easily that $(v,T)$ is a strong solution to system (\ref{eq8})--(\ref{eq10}) subject to (\ref{eq12})--(\ref{eq14}) on $\Omega\times (0,\mathcal{T})$.

 \bf Continuous dependence on the initial data. \rm
 Let $(v_1, T_1)$ and $(v_2, T_2)$ be two strong solutions system (\ref{eq8})--(\ref{eq10}), subject to (\ref{eq12})--(\ref{eq14}), with initial data $(v_{01}, T_{01})$ and $(v_{02}, T_{02})$, respectively. Let $w_i$ be given by (\ref{eq11}) corresponding to $v_i$, $i=1,2$. Denote $v=v_1-v_2, w=w_1-w_2,$ and $T=T_1-T_2$. Then, $(v,T)$ satisfies
 \begin{eqnarray}
&&\label{eqcontv}
\begin{array}{l}
  \displaystyle\partial_tv+(v_1\cdot\nabla_H)v+w_1\partial_zv-\Delta_Hv+f_0\vec{k}\times v +\nabla_Hp_s(x,y,t)\\
  \,\,\,\,\,\,\displaystyle=\int_{-h}^z\nabla_HT(x,y,\xi,t)\mathrm{d}\xi-(v\cdot\nabla_H)v_2-w\partial_zv_2,
\end{array}\\
&& \qquad\int_{-h}^h \nabla_H\cdot vdz=0 \nonumber\\
&& \partial_tT+v_1\cdot\nabla_HT+w_1\partial_zT-\partial_z^2T=-v\cdot \nabla_H T_2-w\partial_z T_2, \label{eqcontT}
\end{eqnarray}
subject to the boundary conditions (\ref{eq12})--(\ref{eq13}), with initial data
$$(v, T)|_{t=0}=(v_{01}-v_{02}, T_{01}-T_{02}).$$

Note that
$
|\nabla_H v_2|\leq \frac{1}{2h}\int_{-h}^h|\nabla_H v_2|\mathrm{d}\xi+\int_{-h}^h|\nabla_H\partial_zv_2|\mathrm{d}\xi.
$
Multiplying (\ref{eqcontv}) with $v$ and integrating over $\Omega$, it follows from integration by parts, the H\"{o}lder inequality and Lemma \ref{lemma1} that
\begin{align*}
&\frac{1}{2}\frac{d}{dt}\|v\|_2^2+\|\nabla_Hv\|_2^2\\
=&\int_{\Omega}\left[\nabla_H\left(\int_{-h}^zTd\xi\right)-(v\cdot\nabla_H)v_2-w\partial_zv_2\right]\cdot vdxdydz\\
=&-\int_{\Omega}\left[\left(\int_{-h}^zTd\xi\right)(\nabla_H\cdot v)+((v\cdot\nabla_H )v_2+w\partial_zv_2)\cdot v\right]dxdydz\\
\leq& C\|T\|_2\|\nabla_H v\|_2+C\int_{M}\left(\int_{-h}^h|v|^2dz\right)\left(\int_{-h}^h(|\nabla_H\partial_zv_2|+|\nabla_Hv_2|)dz\right)dxdy\\
&+C\int_M \left(\int_{-h}^h|\nabla_H\cdot v|dz\right)\left(\int_{-h}^h|\partial_zv_2||v|dz\right)dxdy\\
\leq& C\|T\|_2\|\nabla_H v\|_2+C\|v\|_2(\|v\|_2+\|\nabla_H v\|_2)(\|\nabla_H v_2\|_2+\|\nabla_H\partial_zv_2\|_2)\\
&+C\|\nabla_H v\|_2\|\partial_z v_2\|_2^{\frac{1}{2}}(\|\partial_z v_2\|_2^{\frac{1}{2}}+\|\nabla_H\partial_z v_2\|_2^{\frac{1}{2}})\|v\|_2^{\frac{1}{2}}(\|v\|_2^{\frac{1}{2}}+\|\nabla_Hv\|_2^{\frac{1}{2}})\\
\leq&\frac{1}{2}\|\nabla_H v\|_2^2+C(1+\|\nabla v_2\|_2^2)^2(1+\|\nabla_H\partial_z v_2\|_2^2)(\|T\|_2^2+\|v\|_2^2),
\end{align*}
that is,
\begin{align}
\frac{d}{dt}\|v\|_2^2+\|\nabla_Hv\|_2^2\leq C(1+\|\nabla v_2\|_2^2)^2(1+\|\nabla_H\partial_z v_2\|_2^2)(\|T\|_2^2+\|v\|_2^2).\label{eqcondv}
\end{align}

Multiplying (\ref{eqcontT}) by $T$, it follows from integration by part that
\begin{align}
\frac{1}{2}\frac{\mathrm{d}}{\mathrm{d}t}\|T\|_2^2+\|\partial_z T\|_2^2=-\int_{\Omega}(v\cdot \nabla_HT_2+w\partial_z T_2)T\mathrm{d}x\mathrm{d}y\mathrm{d}z. \label{eqconpT}
\end{align}
Noticing $|T|\leq\int_{-h}^h|\partial_zT|dz$, guaranteed by $T|_{z=-h}=0$, it follows from the Minkowski, H\"{o}lder, Gagliardo-Nirenberg, and Young inequalities that
 \begin{eqnarray}
&&-\int_{\Omega}v\cdot \nabla_HT_2Tdxdydz\nonumber\\
&\leq&\int_M\left(\int_{-h}^h|v||\nabla_HT_2|dz\right)\left(\int_{-h}^h|\partial_zT|dz\right)dxdy\nonumber\\
&\leq&\int_M\left(\int_{-h}^h|v|^2dz\right)^{\frac{1}{2}}\left(\int_{-h}^h|\nabla_HT_2|^2dz\right)^{\frac{1}{2}}\left(\int_{-h}^h|\partial_z T|dz\right)dxdy\nonumber\\
&\leq & \left\|\left(\int_{-h}^h|v|^2dz\right)^{\frac{1}{2}}\right\|_{L^{\frac{2q}{q-2}}(M)}\left\|\left(\int_{-h}^h|\nabla_HT_2|^2dz\right)^{\frac{1}{2}}\right\|_{L^q(M)}
\left\|\left(\int_{-h}^h|\partial_z T|dz\right)\right\|_{L^2(M)}\nonumber\\
&\leq& \left(\int_{-h}^h\|v\|_{\frac{2q}{q-2},M}^2dz\right)^{\frac{1}{2}}\left(\int_{-h}^h \| \nabla_H T_2 \|_{q,M}^2dz\right)^{\frac{1}{2}}\left(\int_{-h}^h\|\partial_z T\|_{2,M}dz\right)\nonumber\\
&\leq& C\left[\int_{-h}^h\left(\|v\|_{2,M}^2+\|v\|_{2,M}^{\frac{2(q-2)}{q}}\|\nabla_H v\|_{2,M}^{\frac{4}{q}}\right)dz\right]^{\frac{1}{2}}\|\nabla_H T_2\|_q\|\partial_zT\|_2\nonumber\\
&\leq& C\left(\|v\|_2+\|v\|_2^{\frac{q-2}{q}}\|\nabla_H v\|_2^{\frac{2}{q}}\right)\|\nabla_H T_2\|_q\|\partial_zT\|_2\nonumber\\
&\leq&\frac{1}{4}(\|\partial_zT\|_2^2+\|\nabla_H v\|_2^2)+C\left(\|\nabla_H T_2\|_q^2+\|\nabla_H T_2\|_q^{\frac{2q}{q-2}}\right)\|v\|_2^2. \label{eqconPt1}
\end{eqnarray}
For the term $-\int_{\Omega}w\partial_z T_2T\mathrm{d}x\mathrm{d}y\mathrm{d}z$, integration by part yields
\begin{eqnarray}
-\int_{\Omega}w\partial_z T_2Tdxdydz&=&\int_{\Omega}T_2(\partial_zw T+w\partial_z T)dxdydz\nonumber\\
&\leq& \|T_2\|_{\infty}(\|\nabla_H v\|_2\|T\|_2+\|\partial_z T\|_2\|\nabla_H v\|_2)\nonumber\\
&\leq& \frac{1}{4}\|\partial_z T\|_2^2+C(\|\nabla_H v\|_2^2+\|T\|_2^2).\label{eqconPt2}
\end{eqnarray}

Substituting (\ref{eqconPt1}) and (\ref{eqconPt2}) into (\ref{eqconpT}) leads to
\begin{align}
&\frac{\mathrm{d}}{\mathrm{d}t}\|T\|_2^2+\|\partial_z T\|_2^2 \nonumber\\
\leq& C\|\nabla_Hv\|_2^2+C\left(1+\|\nabla_H T_2\|_q^2+\|\nabla_H T_2\|_q^{\frac{2q}{q-2}}\right)(\|v\|_2^2+\|T\|_2^2). \label{eqcontTz}
\end{align}
Multiplying (\ref{eqcondv}) by a sufficiently large positive constant $K$, and summing the resultant with (\ref{eqcontTz}) yield
\begin{eqnarray*}
&&\frac{\mathrm{d}}{\mathrm{d}t}(K\|v\|_2^2+\|T\|_2^2)+\frac{1}{2}(K\|\nabla_Hv\|_2^2+\|\partial_z T\|_2^2)\\
&&\leq C\left[(1+\|\nabla v_2\|_2^2)^2(1+\|\nabla_H\partial_z v_2\|_2^2)+\|\nabla_H T_2\|_q^2+\|\nabla_H T_2\|_q^{\frac{2q}{q-2}}\right](\|T\|_2^2+\|v\|_2^2),
\end{eqnarray*}
from which, the conclusion follows by the Gr\"{o}nwalll inequality.
\end{proof}
\section*{Acknowledgments}
The work of J.L. was supported in part by the the National
Natural Science Foundation of China (Grant No. 12371204) and the Key Project of National Natural Science Foundation of China (Grant No. 12131010). The work of E.S.T. was supported in part by the DFG Research Unit FOR 5528 on Geophysical Flows. Moreover, the research of E.S.T. has also benefited from the inspiring environment of the CRC 1114 ``Scaling Cascades in Complex Systems", Project Number 235221301, Project C06, funded by Deutsche Forschungsgemeinschaft (DFG).

\end{document}